\newtheorem{theorem}{Theorem}
\newtheorem{lemma}{Lemma}
\newtheorem{corollary}{Corollary}
\newtheorem{proposition}{Proposition}
\DeclareMathOperator{\ann}{ann}
\DeclareMathOperator{\charak}{char}
\DeclareMathOperator{\Der}{Der}
\DeclareMathOperator{\LDer}{LDer}
\DeclareMathOperator{\ZDer}{ZDer}
\DeclareMathOperator{\IDer}{IDer}
\DeclareMathOperator{\card}{card}
\DeclareMathOperator{\supp}{supp}
\begin{document}

\title{Derivations of group rings}

\author[Artemovych, Bovdi, Salim]{Orest D.~Artemovych, Victor A.~Bovdi, Mohamed A.~Salim}
\footnote{Corresponding Author: \quad V.A.~Bovdi\\
The research  was supported by  the UAEU  UPAR  grant  G00002160.}
\address{Department of Applied   Mathematics,  Cracow University of Technology, Cracow,  Poland}
\email{artemo@usk.pk.edu.pl}

\address{UAEU, United Arab Emirates}
\email{vbovdi@gmail.com; msalim@uaeu.ac.ae }

\keywords{Group ring,  derivation, locally finite group, solder, torsion-free group, nilpotent group,
differentially trivial ring, nilpotent Lie ring, solvable Lie ring. }
\subjclass{20C05, 16S34,   20F45, 20F19, 16W25}

\maketitle

\noindent
\begin{abstract}
Let $R[G]$ be the group ring of a group $G$ over an associative ring $R$ with unity such that all prime divisors of orders of elements of $G$ are invertible in $R$. If $R$ is finite and $G$ is a Chernikov (torsion $FC$-) group, then each $R$-derivation of $R[G]$ is inner. Similar results also are obtained for other  classes of groups $G$ and rings $R$.
\end{abstract}

\markboth{Artemovych, Bovdi, Salim}{Derivations of group rings}

\centerline{\it Dedicated to the memory of Professor V.I.~Sushchansky}

\section{ Introduction}

Let $B$ be  an associative ring not necessarily   with unity $1$.
 An additive  map   $\delta : B\rightarrow B$ is called {\it a derivation} of
$B$ if
\begin{equation} \label{E:1}
\delta (ab)=\delta (a)b+a\delta (b)\qquad\qquad  (a, b\in B).
\end{equation}
The set $\Der B$ of all derivations of $B$ is a Lie ring with respect to a point-wise addition and a point-wise Lie multiplication of derivations. The zero map  $0: B\ni r\mapsto 0\in B$ is a derivation of $B$. A ring $B$ having only zero derivation (i.e., $\Der B=0$) is called {\it differentially trivial} (see \cite{Artemovych}). The rule
\[
\partial_a:B\ni x\mapsto ax-xa\in B
\]
determines a derivation $\partial_a$ of $B$ (so-called {\it an inner derivation of $B$} induced by $a\in B$). If $\delta\in \Der B$ is not inner, then it is called {\it outer}.
The set $\IDer B:=\{\partial_a\mid a\in B\}$ of all inner derivations of  $B$ is an ideal of  the derivation ring $\Der B$ (see \cite{Jacobson}).

Throughout the paper  $p$ is a prime, ${\mathbb Z}$  the ring of
integers  and ${\mathbb N}$ the set of positive integers.  If $X,Y\subseteq B$, then a subgroup of the additive group $B^+$ generated by the Lie commutators
$[g,h]= gh-hg$, where  $g\in X$ and  $h\in Y$, is denoted by $[X,Y]$. The commutator ideal $C(B)$ is an ideal of  $B$ generated by the set $[B,B]$. Furthermore,
$\ann K=\{ r\in B\mid rK=Kr=0\}$ is the annihilator of $K$ in $B$,
$F(B)=\{ a\in B\mid a\ \text{is of finite index in}\ B^+\}$ is the torsion part of $B$, ${\mathbb P}(B)$ is the prime radical of $B$, i.e., the intersection of all prime ideals of $B$, and
$Q(B)$ is the classical quotient ring of  $B$ (see e.g. \cite[Chapters~2~and~5]{McConnell_Robson}).

In the next  $R$ is an associative ring with  unity $1$, $G$ a multiplicative group and
$R^{\mathbb Z}$ is a direct sum of $\card {\mathbb Z}$ copies of $R$. The torsion part of a group  $G$ we denote by $\tau G$ and the set of primes that divide orders of elements of  $\tau G$ by $\pi (G)$, respectively. Let $\Delta^+(G)$ be the set of
all elements $g\in G$ of finite orders such that
$|G:C_G(x)|<\infty$.  The center of a group (respectively a ring) $X$ we denote by $Z(X)$.

Let $R[G]$ be the group algebra of a group $G$ over a ring $R$.
The group of units $U(R[G])$ of the group algebra  $R[G]$ can be written as
$U(R[G])=U(R)\times V(R[G])$, where $U(R)$ is the group of units of the ring $R$ and
\[
V(R[G]):=\Big\{\; \sum_{g\in G}\alpha_gg\in U(R[G])\quad \mid\quad  \sum_{g\in
G}\alpha_g=1\; \Big\}
\]
is the subgroup of the normalized units of $R[G]$.
It follows that
\begin{equation} \label{EE:5}
[U(R),V(R[G])]=0.
\end{equation}
Finally, any unexplained terminology is standard as in  \cite{Bovdi_book, Fuchs_book, Herstein_book, Robinson_book}.

If $\delta(R)=0$, then a derivation $\delta$ of a group ring $R[G]$ is called {\it an $R$-derivation} of $R[G]$. The set $\Der_RR[G]$ of all $R$-derivations of $R[G]$ is a subring of the Lie ring $\Der R[G]$.

Smith (see \cite{Smith}) was first who started to study the derivations in group rings. For instance, she showed that in  group  rings of torsion-free nilpotent groups always there exists an outer derivation. In certain papers (see \cite{Ferrero_Giambruno_Milies, Smith,  Spiegel})  the properties of group rings $R[G]$ such that its every derivation is inner  were studied. In \cite{Ferrero_Giambruno_Milies} it  was  proved that  $R$-derivations
of a group ring $R[G]$ of a center-by-finite group $G$ over a
semiprime ring $R$, where $\charak R=0$ or every prime $\charak R
\notin \pi (G)$, are inner.  In \cite{Burkov} Burkov  proved that every $K$-derivation $\theta$ of a group ring $K[G]$, where $K$ is a commutative ring, $G$ is a torsion group
and the order of each element of $G$ is invertible in $R$, is
generalized inner (i.e., there exists $a\in (K[G])^*$ such that
$\theta (x)=xa-ax$ for each $x\in K[G]$, where $(K[G])^*$ is a
$(K[G],K[G])$-bimodule of all functions from $G$ in $K$). Derivations of certain rings were investigated  in \cite{Artemovych_3, Artemovych_4, Artemovych_2, Baclawski, Bai_Meng_He, Bavula_1,  Bavula_2,  Kaygorodov_Popov_1, Kaygorodov_Popov_2,  Kharchenko_1, Kharchenko_2, Kiss_Laczkovich, Lanski, Lanski_Montgomery, Penkava_Vanhaecke, Spiegel, Tang_Liu_Xu}.

Our main results are  the following

\begin{theorem} \label{T:1}
Let  $R$ be  a  finite ring. If  $G$ is a Chernikov group such that $\pi (G)\cap \pi (R^+)=\varnothing$, then every $R$-derivation of $R[G]$ is inner.
\end{theorem}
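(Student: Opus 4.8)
The plan is to pass from the finite case to the Chernikov case through the divisible part of $G$.

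First reduce to the case that $R$ is commutative. Since $\delta(R)=0$, the map $\delta$ is a homomorphism of $R$-bimodules, and because $G$ centralizes $R$ inside $R[G]$ one gets $r\delta(g)=\delta(rg)=\delta(gr)=\delta(g)r$ for all $r\in R$, $g\in G$, so $\delta(g)\in C_{R[G]}(R)=Z(R)[G]$. Hence $\delta$ restricts to a $Z(R)$-derivation of $Z(R)[G]$, and if that restriction equals $\partial_c$ for some $c\in Z(R)[G]$ then, as $c$ commutes with every coefficient, $\delta=\partial_c$ on all of $R[G]$. Since $\pi(Z(R)^+)\subseteq\pi(R^+)$, we may assume $R$ commutative; then $R^+$ has a finite exponent $e$ which is prime to the order of every element of $G$, and the order of every finite subgroup of $G$ is invertible in $R$. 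Put $\sigma(g):=\delta(g)g^{-1}$; then $\sigma(g_1g_2)=\sigma(g_1)+g_1\sigma(g_2)g_1^{-1}$, so $\sigma$ is a $1$-cocycle for the conjugation action of $G$ on $R[G]$ and $\delta$ is inner exactly when $\sigma$ is a coboundary. In particular, for a finite subgroup $H\le G$ the element $c_H:=|H|^{-1}\sum_{h\in H}\delta(h)h^{-1}$ satisfies $\delta(x)=c_Hx-xc_H$ for all $x\in H$ (a direct reindexing of the sum), so the statement holds when $G$ is finite.

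Now let $D$ be the radicable (divisible, abelian, finite-rank) part of the Chernikov group $G$; then $D\trianglelefteq G$, the index $n:=|G:D|$ is finite and invertible in $R$, $D$ is a $\pi(G)$-torsion group, and $D$ is \emph{central} in $G$. This centrality is the structural input on which everything rests, and it is the place where one really uses the group-theoretic hypothesis (it is automatic once all conjugacy classes of $G$ are finite, since a torsion automorphism of $\prod C_{p_i^\infty}$ with fixed subgroup of finite index is trivial); I regard establishing it, or the reduction to it, as the main obstacle. Granting it, apply $\delta$ to $dx=xd$ for $d\in D$, $x\in R[G]$, and use $d\delta(x)=\delta(x)d$ to get $\delta(d)x=x\delta(d)$; hence $\delta(d)\in Z(R[G])$ and $\sigma(d)=\delta(d)d^{-1}\in Z(R[G])$, so the cocycle identity collapses to $\sigma(d_1d_2)=\sigma(d_1)+\sigma(d_2)$. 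Thus $\sigma|_D$ is a homomorphism from the $\pi(G)$-torsion group $D$ into the additive group $Z(R[G])^+$, whose exponent divides $e$; since $e$ is prime to $\pi(G)$ this forces $\sigma|_D=0$, i.e.\ $\delta|_D=0$. (This step is exactly where the finiteness of $R$ — equivalently the finite exponent $e$ — is essential.)

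Finally, conclude by averaging over $G/D$. Fix a transversal $T=\{t_1,\dots,t_n\}$ of $D$ in $G$ and set $c:=n^{-1}\sum_{i=1}^{n}\delta(t_i)t_i^{-1}$. Writing $t_jt_i=\rho_{ji}\mu_{ji}$ with $\rho_{ji}\in D$ and $i\mapsto\mu_{ji}\in T$ a permutation, the computation of the finite case now goes through verbatim: using $\delta|_D=0$ one has $\delta(t_jt_i)=\rho_{ji}\delta(\mu_{ji})$, and since the factor-set elements $\rho_{ji}$ lie in the central subgroup $D$ they commute past everything and cancel, yielding $t_jc=ct_j-\delta(t_j)$, i.e.\ $\delta(t_j)=ct_j-t_jc$. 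For a general $g=dt_j$ ($d\in D$) one gets $\delta(g)=d\,\delta(t_j)=d(ct_j-t_jc)=cg-gc$, again because $d$ is central. Hence $\delta(g)=cg-gc$ for all $g\in G$, so $\delta=\partial_c$ is inner, with $c\in R[G]$; once $\delta|_D=0$ and the centrality of $D$ are in hand this last paragraph is entirely routine.
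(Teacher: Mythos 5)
Your reduction to commutative $R$, your averaging over a finite subgroup, and your use of the finite exponent of $R^+$ to kill additive homomorphisms out of a $\pi(G)$-torsion group are all sound and agree with the paper's Lemma~\ref{L:11} and Proposition~\ref{P:11}. The proof breaks at exactly the point you flag yourself: the claim that the divisible part $D$ of a Chernikov group is central in $G$. This is false in general, and the parenthetical justification does not repair it, because a Chernikov group need not have all conjugacy classes finite. Concretely, take $G=C_{p^\infty}\rtimes\langle t\rangle$ with $p$ odd and $t$ of order $2$ acting by inversion, and $R={\mathbb F}_q$ with $q\notin\{2,p\}$: this satisfies all hypotheses of the theorem, yet $C_D(t)=1$, the conjugacy class of $t$ is the infinite coset $tD$, and $D$ is not central. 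Both of your subsequent steps use centrality essentially --- the derivation of $\delta|_D=0$ applies $\delta$ to $dx=xd$, and the final averaging commutes the factor-set elements $\rho_{ji}\in D$ past everything --- and indeed the intermediate conclusion $\delta|_D=0$ is itself special to the central case (by Lemma~\ref{L:19}$(i)$ one only gets that $\delta(d)$ is supported off $C_G(d)$). So as written the argument proves the theorem only for centre-by-finite Chernikov groups, a case already covered by Corollary~\ref{C:20}$(ii)$ and by Ferrero--Giambruno--Milies.

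The paper's proof makes no assumption on how $F$ acts on $D$. It writes $D=\bigcup_n D_n$ as an ascending union of finite characteristic (hence $G$-normal) subgroups, applies the finite averaging to each $D_n$ to obtain $x_n$ with $\delta|_{D_n}=\partial_{-x_n}|_{D_n}$, observes that $x_s-x_n\in C_{R[G]}(D_s)$ for $s\le n$, and then uses the finiteness of $R$ to force the descending chain $C_{R[G]}(D_1)\supseteq C_{R[G]}(D_2)\supseteq\cdots$ to stabilize; this glues the $x_n$ into a single element implementing $\delta$ on all of $D$, after which the finite group $D_mF$ is absorbed. To salvage your outline you would have to replace the centrality of $D$ by some such stabilization or limiting argument; note that the finiteness of $R$ is then used for more than just bounding the exponent of $R^+$.
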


\begin{theorem} \label{T:2}
Let $R$ be a  ring. If $G$ is a torsion abelian group such that all primes $p\in \pi (G)$ are invertible in $R$, then $\Der_RR[G]=0$.
\end{theorem}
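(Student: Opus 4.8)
The plan is to show that every $R$-derivation $\delta$ of $R[G]$ annihilates each group element, whence additivity of $\delta$ forces $\delta=0$ on all of $R[G]$.

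First I would record two elementary observations. Since $G$ is abelian, each $g\in G$ is central in $R[G]$: for $\sum_h r_hh\in R[G]$ one has $g\bigl(\sum_h r_hh\bigr)=\sum_h r_h(gh)=\sum_h r_h(hg)=\bigl(\sum_h r_hh\bigr)g$. Also $\delta(1)=0$, either because $\delta$ annihilates $R$ by hypothesis or directly from $\delta(1)=\delta(1\cdot1)=2\delta(1)$.

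Next, fix $g\in G$ and let $n$ be its (finite) order. Applying $\delta$ to the identity $g^n=1$, using the Leibniz rule \eqref{E:1} and the centrality of $g$, I obtain
\[
0=\delta(g^n)=\sum_{i=0}^{n-1}g^i\,\delta(g)\,g^{n-1-i}=n\,g^{n-1}\delta(g).
\]
Multiplying on the left by the unit $g$ and using $g^n=1$ gives $n\,\delta(g)=0$ in $R[G]$. Every prime divisor of $n$ lies in $\pi(G)$ and is therefore invertible in $R$, so $n\cdot 1_R$ is a product of units of $R$, hence itself a unit; multiplying $n\,\delta(g)=0$ by its inverse $n^{-1}\in R$ yields $\delta(g)=0$.

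Finally, for an arbitrary $\alpha=\sum_{g\in G}r_gg\in R[G]$, additivity of $\delta$ together with $\delta(r_gg)=\delta(r_g)g+r_g\delta(g)=0$ (here $\delta(r_g)=0$ since $\delta\in\Der_RR[G]$, and $\delta(g)=0$ by the previous step) gives $\delta(\alpha)=0$; thus $\Der_RR[G]=0$. There is no serious obstacle in this argument: the only points deserving attention are that the commutativity of $G$ makes each $g$ central, so that the Leibniz expansion of $\delta(g^n)$ collapses to $n\,g^{n-1}\delta(g)$, and that although $R$ need not be commutative this is harmless, since $n\cdot 1_R$ is a central unit and the manipulations only ever multiply by units.
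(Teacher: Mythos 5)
Your proof is correct and follows essentially the same route as the paper's: the paper invokes Proposition \ref{P:11}$(i)$ to place $\delta(g)$ in $Z(R)[G]\subseteq Z(R[G])$ and then applies the computation $0=\delta(g^{|g|})=|g|\,g^{|g|-1}\delta(g)$ from Eqs.~(\ref{E:2})--(\ref{E:3}), while you obtain the same collapse of the Leibniz expansion directly from the centrality of $g$ in $R[G]$ (a consequence of $G$ being abelian) and then cancel the central unit $|g|\cdot 1_R$. The two arguments are the same in substance, differing only in which centrality observation is used to commute $g$ past $\delta(g)$.
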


\begin{theorem}\label{T:3}
 Let $R$ be a finite ring, $G$ a countable
locally finite group such that every prime $p\in \pi (G)$ is
invertible in $R$. If $G$ has a finite subgroup $H$ with the
finite centralizer $C_{R[G]}(H)$, then every $R$-derivation of
$R[G]$ is inner.
\end{theorem}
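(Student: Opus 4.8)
The plan is to exhaust $G$ by an increasing chain of finite subgroups, to show that $\delta$ is inner on each finite piece by an averaging (Maschke‑type) argument, and then to glue the resulting local implementers into a single one; the gluing is where the hypothesis on $C_{R[G]}(H)$ is essential. So fix an $R$‑derivation $\delta$ of $R[G]$. Since $G$ is countable and locally finite, enumerate $G=\{g_1,g_2,\dots\}$ and set $F_n=\gp{H,g_1,\dots,g_n}$; then $F_1\subseteq F_2\subseteq\cdots$ are finite subgroups with $H\subseteq F_n$ and $\bigcup_n F_n=G$. Note first that $R$ and $G$ commute elementwise in $R[G]$, so from $\delta(R)=0$ one gets $\delta(g)\in C_{R[G]}(R)$ for every $g\in G$. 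Since every prime dividing $|F_n|$ lies in $\pi(G)$ and is therefore invertible in $R$, the integer $|F_n|$ is a unit of $R$, and we may form
\[
a_n:=\tfrac{1}{|F_n|}\sum_{g\in F_n}\delta(g)\,g^{-1}\in R[G].
\]
A direct check on the generators (on $g\in F_n$ and, using $a_n\in C_{R[G]}(R)$, on $R$) shows that $\delta(x)=a_nx-xa_n$ for every $x\in R[F_n]$; equivalently, $\tfrac{1}{|F_n|}\sum_{g}g\otimes g^{-1}$ is a separability element of the extension $R\subseteq R[F_n]$, so any derivation of $R[F_n]$ vanishing on $R$ is inner. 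Thus $\delta|_{R[F_n]}=\partial_{a_n}$.

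For the gluing, put $A_n=\{a\in R[G]\mid \delta(x)=ax-xa\text{ for all }x\in R[F_n]\}$. Each $A_n$ is nonempty since $a_n\in A_n$, and it is a coset of the subring $C_{R[G]}(R[F_n])$; moreover $F_n\subseteq F_{n+1}$ forces $A_{n+1}\subseteq A_n$. Since $H\subseteq F_n$ we have $C_{R[G]}(R[F_n])\subseteq C_{R[G]}(H)$, which is finite by hypothesis, so each $A_n$ is a nonempty finite set. A decreasing chain of nonempty finite sets has nonempty intersection, so we may choose $a\in\bigcap_n A_n$. Every element of $R[G]$ has finite support and so lies in some $R[F_n]$; hence $\delta(x)=ax-xa$ for all $x\in R[G]$, i.e. $\delta=\partial_a$ is an inner derivation.

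The finite‑subgroup step is routine once $|F_n|$ is invertible in $R$, the only points needing care being that $R$ centralizes $G$ inside $R[G]$ — so that $a_n\in C_{R[G]}(R)$ and $\partial_{a_n}$ is again an $R$‑derivation — and the bookkeeping in the averaging identity. The genuine obstacle is the gluing: the elements $a_n$ are highly non‑unique and a priori need not be compatible as $n$ grows. The hypothesis that $C_{R[G]}(H)$ is finite is exactly what controls this: it forces each ambiguity set $A_n$ to be finite, and then the elementary compactness of a nested family of nonempty finite sets produces a single global implementer $a$.
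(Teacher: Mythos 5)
Your proposal is correct and follows essentially the same route as the paper's proof: exhaust $G$ by an ascending chain of finite subgroups containing $H$, produce a local implementer on each by the averaging element $\frac{1}{|F_n|}\sum_{g\in F_n}\delta(g)g^{-1}$ (the paper's Lemma~\ref{L:11}/Lemma~\ref{L:13}$(i)$), and use the finiteness of $C_{R[G]}(H)$ to stabilize the implementers. Your gluing step via the nested nonempty finite cosets $A_n$ is in fact a cleaner and fully rigorous version of the paper's pigeonhole assertion that the $y_n$ become eventually constant.
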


Also if $R$ is finite and $G$  is a torsion $FC$-group, then every $R$-derivation
of $R[G]$ is inner (see Corollary \ref{C:20}).

Recall that a  ring $B$ is {\it prime} if, for any ideals $X,Y$ of $B$, we have
$XY=0$ implies that    $X=0$ or $Y=0$. A ring $B$ without nonzero
nilpotent ideals is called {\it  semiprime}.

In \cite{Kaplansky}  Kaplansky has conjectured that the group algebra
${\mathbb F}[G]$ of a torsion-free group $G$ over a field $\mathbb
F$ has no nontrivial idempotents. We prove the following.

\begin{theorem} \label{T:4}
If $R$ is a prime ring of characteristic $\neq 2,3$ with the non-central unit group
$U(R)$ and $G$ a group such that $\Delta^+(G)=1$, then the following conditions hold:
\begin{itemize}
\item[$(i)$] if $G$  is non-abelian, then $R[G]$  has only trivial
idempotents;
\item[$(ii)$] if  every prime $p\in \pi (G)$
is invertible in $R$   and
$R[G]$ has a non-trivial idempotent, then $G$ is abelian,  $\delta (\tau G)=0$ for each
$\delta \in \Der R[G]$ (and so   $\IDer_RR[G]=0$).

\end{itemize}
\end{theorem}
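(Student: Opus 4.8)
The plan is to obtain part (ii) as a formal consequence of part (i) together with two short computations, so the substance lies in part (i), which I would approach through the centre of $R[G]$ and the hypothesis $\Delta^{+}(G)=1$.

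\textit{Part (ii) from part (i).} Assume the hypotheses of (ii) and that $R[G]$ has a non-trivial idempotent. If $G$ were non-abelian, part (i) would say that $R[G]$ has only trivial idempotents, a contradiction; hence $G$ is abelian, and then each $g\in G$ is central in $R[G]$ (coefficients always commute with group elements, and group elements commute among themselves since $G$ is abelian). For $g\in\tau G$ of order $n$ and $\delta\in\Der R[G]$, applying $\delta$ to $g^{n}=1$ and using centrality of $g$ gives $0=\delta(g^{n})=n\,g^{n-1}\delta(g)$, whence $n\,\delta(g)=0$ after multiplying by the unit $g$; since every prime dividing $n$ lies in $\pi(G)$ it is invertible in $R$, so $n\cdot 1\in U(R)$ and $\delta(g)=0$. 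Thus $\delta(\tau G)=0$ (as it must be, since here $\tau G=\Delta^{+}(G)=1$). Finally, an inner $R$-derivation $\partial_{a}$ satisfies $\partial_{a}(R)=0$; writing $a=\sum_{g}a_{g}g$ and using $gr=rg$ in $R[G]$ one finds $\partial_{a}(r)=\sum_{g}[a_{g},r]g$, so every $a_{g}\in Z(R)$, i.e.\ $a\in Z(R)[G]\subseteq Z(R[G])$ because $G$ is abelian; hence $\partial_{a}=0$ and $\IDer_{R}R[G]=0$.

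\textit{Part (i).} Let $e$ be a non-trivial idempotent of $R[G]$. First I would record that $z=\sum_{g}r_{g}g$ is central in $R[G]$ exactly when every $r_{g}$ lies in $Z(R)$ and $g\mapsto r_{g}$ is constant on conjugacy classes; in particular $\supp z\subseteq\Delta(G)$. Since $\Delta^{+}(G)=1$, the torsion subgroup of the $FC$-centre $\Delta(G)$ is trivial, and because the derived subgroup of an $FC$-group is periodic also $\Delta(G)'\subseteq\Delta^{+}(G)=1$; hence $\Delta(G)$ is torsion-free abelian, so it is orderable and $Z(R)[\Delta(G)]$ is a domain, being the group ring of an orderable group over the domain $Z(R)$ (the centre of a prime ring is a domain). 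Therefore $Z(R[G])$, a subring of a domain, is a domain, and its only idempotents are $0$ and $1$. The decisive remaining step is to force $e$ into $Z(R[G])$: using $G$ non-abelian together with $\Delta^{+}(G)=1$ one rules out an idempotent whose support meets an infinite conjugacy class --- I would pass to the finitely generated subgroup $H=\gp{\supp e}$ and analyse the idempotents of $R[H]$ --- while primeness of $R$, $\charak R\neq 2,3$ and the hypothesis on $U(R)$ control the idempotents that $R$ itself can contribute; together these force $e\in Z(R[G])$, hence $e\in\{0,1\}$, contradicting the choice of $e$.

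The step I expect to be the main obstacle is precisely this last reduction to the central case. There is no a priori bound on $\supp e$ inside $\Delta(G)$, so the argument cannot be purely centre-theoretic; it must combine, at once, the absence of torsion in the $FC$-centre, the non-commutativity of $G$, and the arithmetic and structural restrictions on $R$ --- which is where $\charak R\neq 2,3$ and the unit hypothesis are consumed. Once $e$ is known to be central, the domain argument above closes part (i), and part (ii) then follows from it as above.
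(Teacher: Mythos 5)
Your reduction of part $(ii)$ to part $(i)$ is sound, and you correctly observe that once $G$ is abelian one has $\tau G=\Delta^{+}(G)=1$, so $\delta(\tau G)=0$ is automatic and the computation showing $\IDer_RR[G]=0$ is routine. The problem is part $(i)$, where your argument stops exactly where the content of the theorem lies. Under the hypotheses $R[G]$ is a \emph{prime} ring (the standard consequence of $R$ prime and $\Delta^{+}(G)=1$; this is the opening line of the paper's proof). In a prime ring every central idempotent is already trivial, since $e$ central gives $e\,R[G]\,(1-e)=0$. Hence your plan --- first force $e$ into $Z(R[G])$, then use that $Z(R[G])$ is a domain --- is circular in effect: the step ``force $e$ into $Z(R[G])$'' is equivalent to the full assertion that $e$ is trivial, and the analysis of $\Delta(G)$ being torsion-free abelian and of $Z(R)[\Delta(G)]$ being a domain (all of which is correct) buys nothing beyond what primeness alone already gives. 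You yourself flag this ``decisive remaining step'' as the main obstacle and offer only a heuristic for it; no argument is actually supplied, so part $(i)$ is unproven, and part $(ii)$, which you derive from it, falls with it.

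The paper's proof runs in the opposite direction and through quite different machinery: a non-trivial idempotent of the prime ring $R[G]$ is necessarily \emph{non-central}, and the identity $[U(R),V(R[G])]=0$ of Eq.~$(\ref{EE:5})$ exhibits two commuting subgroups of units. Corollary~6 of Lanski, applied to the prime ring $R[G]$ equipped with this non-trivial idempotent, yields that either $[G,L]=0$ for some non-central Lie ideal $L$ or $[U(R),M]=0$ for some non-central ideal $M$; Lemma~2 of Bergen--Herstein--Kerr (this is where $\charak R\neq 2,3$ is consumed) then forces $G$, respectively $U(R)$, to be central, contradicting the non-commutativity of $G$ in case $(i)$ and the hypothesis that $U(R)$ is non-central throughout. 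Note that your sketch nowhere uses the hypothesis that $U(R)$ is non-central, nor the characteristic restriction, which is a sign that the essential mechanism is missing: to complete your approach you would need a substitute for the Lanski/Bergen--Herstein--Kerr step, and nothing in the proposal currently provides one.
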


A Lie ring $D$ is called {\it nilpotent} if $\gamma_{n+1}D=0$ for
some $n\in\mathbb{N}$, where
\[
\gamma_1D:=D, \ldots, \gamma_{k+1}D:=[\gamma_kD,D],\ldots  \qquad (k\in\mathbb{N}).
\]
 The
least such $n$ is the {\it class of nilpotency} of $D$. If $n\in\mathbb{N}$ and
$x_1,\ldots, x_n, x, y\in B$, then the left normed commutators are
defined inductively:
\begin{itemize}
\item[$\bullet$] $[x,y]:=[x,_1 y]:=xy-yx$;
\item[$\bullet$] $[x,_{n+1}y]:=[[x,_n y],y]$;
\item[$\bullet$] $[x_1,\ldots ,x_{n-1},x_n]:=[[x_1,\ldots ,x_{n-1}],x_n]$.
\end{itemize}
The lower central chain of $B$ is
\[
B=\gamma_1B\supseteq \gamma_2B\supseteq \cdots \supseteq \gamma_nB\supseteq \cdots ,
\]
where $\gamma_{n+1}B:=[\gamma_nB,B]$ for integers $n\geq 1$. If $\gamma_{n+1}B=0$ and
$\gamma_nB\neq 0$ for some  $n\in\mathbb{N}$, then $B$ is called {\it Lie nilpotent
of class} $n$. Each associative ring $B$ can be considered as a Lie ring $B^L$ under operations ``$[-,-]$"{} and ``$+$"{}, which is called the {\it associated Lie ring} of $B$. It is easy to see that an associative ring $B$ is Lie nilpotent if and only if  the associated Lie ring $B^L$ is nilpotent.

A Lie ring $D$ is called {\it solvable} if $D^{(n+1)}=0$ for some $n\in {\mathbb N}$, where
$D^{(1)}:=D$, $D^{(n+1)}:=[D^{(n)},D^{(n)}]$. The least such $n$ is
the {\it derived length} of $D$.
The derived chain of $B$ is given by
\[
B=B^{(0)}\supseteq B^{(1)}\supseteq \cdots \supseteq
B^{(n)}\supseteq \cdots ,
\]
where $B^{(n)}:=[B^{(n-1)},B^{(n-1)}]$,
$n\geq 1$. An associative ring $B$ is {\it Lie solvable of length
$n$} if $B^{(n)}=0$, $n$ least. An associative ring $B$ is Lie
solvable if and only if  the associated Lie ring $B^L$ is
solvable.

\begin{theorem} \label{T:5}
Let $B$ be a semiprime ring. The Lie ring
$\Der B$ is  nilpotent (respectively solvable)  if and
only if $\Der B=0$, i.e., $B$ is differentially trivial.
\end{theorem}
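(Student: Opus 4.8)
The plan is as follows. One direction is trivial: if $\Der B=0$ then $\Der B$ is both nilpotent and solvable. For the converse, a nilpotent Lie ring is solvable, so it suffices to prove that a semiprime ring $B$ with $\Der B$ solvable satisfies $\Der B=0$. First I would descend to the commutative case. Since $\IDer B$ is an ideal of $\Der B$ it is solvable, and the map $a\mapsto \partial_a$ is a surjective homomorphism of Lie rings $B^{L}\to \IDer B$ with kernel $Z(B)$; hence $B^{L}/Z(B)\cong \IDer B$ is solvable, so $(B^{L})^{(k)}\subseteq Z(B)$ for some $k$ and therefore $(B^{L})^{(k+1)}=[(B^{L})^{(k)},(B^{L})^{(k)}]=0$, i.e. $B$ is Lie solvable. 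Invoking the structure theory of semiprime rings satisfying a Lie solvability identity (a Lie solvable semiprime ring, under a mild restriction on the characteristic, is commutative), we conclude that $B$ is commutative, so $\IDer B=0$; thus the problem reduces to: a commutative semiprime (hence reduced) ring $B$ with $\Der B$ solvable has $\Der B=0$.

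For the commutative case, suppose $\Der B\neq 0$. Since $B$ is commutative, $\Der B$ is a $B$-module: for $d\in\Der B$ and $f\in B$ the map $fd$ is again a derivation, and a direct computation gives $[fd,d]=-d(f)\,d$ and $[d,fd]=d(f)\,d$. Let $m$ be the derived length of $\Der B$ and $A=(\Der B)^{(m-1)}$ the last nonzero term of the derived series; it is a nonzero abelian ideal of $\Der B$. Pick $0\neq d\in A$. For every $f\in B$ we have $[fd,d]=-d(f)\,d\in A$, hence $d(f)\,d\in A$; since $A$ is abelian, $0=[d,d(f)\,d]=d^{2}(f)\,d$, which means $d^{2}(f)\cdot d(g)=0$ for all $f,g\in B$. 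Applying $d$ to this identity and using $d^{3}(f)=d^{2}(d(f))\in d^{2}(B)$ (so that $d^{3}(f)\cdot d(g)=0$) yields $d^{2}(f)\cdot d^{2}(g)=0$ for all $f,g\in B$. Consequently the ideal of $B$ generated by $d^{2}(B)$ has zero square, and semiprimeness forces $d^{2}=0$. Finally, for $b\in B$ one has $d^{2}(b^{2})=2\,d(b)^{2}$; since $d^{2}=0$ and $B$ is reduced, $2\,d(b)^{2}=0$ gives $d(b)^{2}=0$ and hence $d(b)=0$, so $d=0$ — a contradiction. Therefore $\Der B=0$.

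I expect the main obstacle to be the reduction in the first paragraph: the passage from ``$B$ Lie solvable'' to ``$B$ commutative'' rests on the Lie-theoretic structure theory of semiprime rings and, with it, on a hypothesis on the characteristic of $B$ — and such a hypothesis is genuinely needed, since the last step of the commutative case also uses that $2$ is not a zero divisor. Without it the statement fails: for instance $\Der\!\big(M_{2}({\mathbb F}_{2})\big)$ and $\Der\!\big({\mathbb F}_{2}[x]\big)$ are nonzero solvable Lie rings over ${\mathbb F}_{2}$. Thus the argument above establishes the theorem for semiprime rings in which $2$ is invertible (in particular for ${\mathbb Q}$-algebras); the remaining verifications ($\IDer B$ is an ideal of $\Der B$, $[fd,d]=-d(f)d$ for commutative $B$, and $d^{2}(b^{2})=2d(b)^{2}$) are elementary.
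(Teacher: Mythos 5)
Your argument follows the same two--step route as the paper's own proof: first reduce to the commutative case (via the solvability of $\IDer B\cong B^{L}/Z(B)$ and the Lie structure theory of semiprime rings), then run a ``last nonzero term of the derived series'' argument with $\Der B$ viewed as a $B$-module. But your execution is more careful exactly where the paper's proof is defective. In the commutative case the paper asserts that $bd\in A$ for $b\in B$ and $d\in A$, where $A$ is the last nonzero derived term of $\Der B$ --- i.e.\ that $A$ is a $B$-submodule --- with no justification. You instead obtain $d(f)d=-[fd,d]\in A$ from the fact that $A$ is a Lie ideal of $\Der B$, which is a valid (and necessary) repair; from there the chain $d^{2}(f)d=[d,d(f)d]=0$, hence $d^{2}(f)d(g)=0$, hence $d^{2}(f)d^{2}(g)=0$, hence $d^{2}=0$ by semiprimeness, is correct.

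Your reservation about the characteristic is also well founded: the solvable half of the theorem as printed is false. Take $B={\mathbb F}_2[x]$, a domain and hence semiprime. Then $\Der B={\mathbb F}_2[x]\partial$ with $[f\partial,g\partial]=(fg'-gf')\partial$; since $g'\in{\mathbb F}_2[x^{2}]$ for every $g$, one gets $(\Der B)^{(1)}={\mathbb F}_2[x^{2}]\partial$, which is abelian because $f'=0$ for $f\in{\mathbb F}_2[x^{2}]$. So $\Der B$ is a nonzero metabelian Lie ring. This example simultaneously shows that $A={\mathbb F}_2[x^{2}]\partial$ need not be a $B$-submodule (the paper's gap above) and that your last step, where $2d(b)^{2}=0$ must force $d(b)=0$, genuinely requires $B$ to be $2$-torsion-free. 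Likewise $M_2({\mathbb F}_2)$ is semiprime and noncommutative with $\Der M_2({\mathbb F}_2)=\IDer M_2({\mathbb F}_2)$ metabelian, which refutes Lemma~\ref{L:23}$(vi)$ as stated: its claimed derivation from part $(v)$ only yields $[B^{(n)},B]=0$, not commutativity, absent a hypothesis on $2$-torsion. In summary, your proof is complete and correct for $2$-torsion-free semiprime rings, the added hypothesis is a genuine necessity rather than an artifact of your method, and only the nilpotent half of the statement (where the reduction to commutativity via Lemma~\ref{L:23}$(iv)$ and the central-derivation argument need no such hypothesis) survives in full generality.
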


As an immediate consequence of the previous result  we obtain the following.

\begin{theorem} \label{T:6}
Let $\mathbb F$ be a division ring of characteristic $p\geq 0$, $G$ a group. If  each  prime $q\in \pi
(G)$ is invertible in $\mathbb F$, then $\Der {\mathbb F}[G]$ is a
nilpotent (respectively solvable) Lie ring if and only if $\Der
{\mathbb F}[G]=0$.
\end{theorem}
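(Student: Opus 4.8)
The plan is to derive Theorem~\ref{T:6} from Theorem~\ref{T:5} by verifying that, under the stated hypotheses, the group algebra $\mathbb{F}[G]$ is a semiprime ring; once that is in place, Theorem~\ref{T:5} applies verbatim to $B=\mathbb{F}[G]$ and yields the equivalence between nilpotency (or solvability) of $\Der \mathbb{F}[G]$ and its vanishing.

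\medskip

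\noindent\emph{Step 1: Semiprimeness of $\mathbb{F}[G]$.}
First I would invoke the classical structure theory for group algebras over a field (or division ring): if $\charak \mathbb{F}=0$, then $\mathbb{F}[G]$ is always semiprime by Amitsur's theorem; if $\charak \mathbb{F}=p>0$, then by Passman's characterization $\mathbb{F}[G]$ is semiprime precisely when $G$ has no finite normal subgroup of order divisible by $p$. Under the hypothesis that every prime $q\in\pi(G)$ is invertible in $\mathbb{F}$, the prime $p$ cannot lie in $\pi(G)$, so $G$ has no element of order $p$ at all, hence certainly no finite normal subgroup of order divisible by $p$. (If $\mathbb{F}$ is a genuinely noncommutative division ring the same conclusion holds by passing to its center, or by the analogous Maschke-type averaging argument on finite subgroups, whose orders are units in $\mathbb{F}$.) Therefore $\mathbb{F}[G]$ is semiprime in all cases.

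\medskip

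\noindent\emph{Step 2: Apply Theorem~\ref{T:5}.}
With $B=\mathbb{F}[G]$ semiprime, Theorem~\ref{T:5} gives immediately that $\Der B$ is nilpotent, or solvable, if and only if $\Der B=0$. Since a nonzero Lie ring that is nilpotent is in particular solvable, the two statements in the theorem are genuinely a single equivalence with two equivalent hypotheses, and nothing further is needed: $\Der \mathbb{F}[G]$ nilpotent $\iff$ $\Der\mathbb{F}[G]$ solvable $\iff$ $\Der\mathbb{F}[G]=0$.

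\medskip

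\noindent I expect the only real content to be Step~1 — correctly citing the semiprimeness criterion and checking that the invertibility hypothesis on $\pi(G)$ forces the characteristic $p$ out of $\pi(G)$ — while Step~2 is a one-line appeal to the already-proved Theorem~\ref{T:5}. The mild subtlety worth a sentence is the division-ring (as opposed to field) case, where one should note that Maschke-type averaging over finite subgroups still works because their orders are invertible in $\mathbb{F}$, so the prime-radical computation underlying semiprimeness goes through unchanged.
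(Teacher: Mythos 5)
Your proposal is correct and follows essentially the same route as the paper: establish semiprimeness of $\mathbb{F}[G]$ from the hypothesis that the primes in $\pi(G)$ are invertible (the paper cites Connell for this), and then apply Theorem~\ref{T:5}. The extra care you take with the noncommutative division-ring case is a reasonable refinement, but the argument is the same in substance.
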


The set of all sequences $\{ D_n\in \Der R\}_{ n\in \mathbb{Z}}$ such that   $D_n(a)=0$ for   every $a\in R$ and almost all $n\in \mathbb{Z}$
we denote by ${\mathcal LF}(R)$. Evidently,  ${\mathcal LF}(R)$ is a Lie ring with respect to an addition ``$+$"{} and the  Lie multiplication ``$[-,-]$"{} defined by the rules:
\[
\begin{split}
{\{ D_n\}_{ n\in \mathbb{Z}} }+{\{ K_n\}_{ n\in \mathbb{Z}} }={\{ D_n+K_n\}_{ n\in \mathbb{Z}} };\qquad
[{\{ D_n\}_{ n\in \mathbb{Z}} },{\{ K_n\}_{ n\in \mathbb{Z}} }]={\{ [D_n,K_n]\}_{ n\in \mathbb{Z}} },\\
\end{split}
\]
where
${\{ D_n\}_{ n\in \mathbb{Z}}  },{\{ K_n\}_{ n\in \mathbb{Z}}  }\in {\mathcal LF}(R)$.  As  in    \cite{Baclawski, Burkov_2},  the  family ${\mathcal LF}(R)$  is called {\it locally finite}.  The next  result  is analogously with \cite[Theorem 2]{Baclawski}.

\begin{theorem} \label{T:7}
Let $R$ be a  ring.  If $G$ is a countable  torsion-free abelian group,  then
 the following conditions hold:
 \begin{itemize}
 \item[$(i)$]  $\Der R[G]\cong {\mathcal LF}(R)\oplus (Z(R[G]))^{\mathbb Z}$ as  Lie rings;
 \item[$(ii)$] each nonzero $R$-derivation of  $R[G]$ is outer.
 \end{itemize}
\end{theorem}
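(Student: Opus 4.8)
The plan is to set $G = \gp{g_1, g_2, \ldots}$ where $1 = G_0 \subset G_1 \subset G_2 \subset \cdots$ is an ascending chain of finitely generated free abelian subgroups with $G_n = \gp{g_1,\dots,g_n}$ and $G = \bigcup_n G_n$. Since $G$ is torsion-free abelian and countable, $R[G]$ is the direct limit (union) of the Laurent polynomial rings $R[G_n] = R[x_1^{\pm1},\dots,x_n^{\pm1}]$. A derivation $\delta$ of $R[G]$ is determined by its values on the generators $g_1, g_2, \ldots$ together with its restriction to $R$. First I would record the standard fact that for a commutative-variable Laurent extension, a derivation is freely specified by a derivation of the coefficient ring together with arbitrary images of the invertible generators; concretely, $\delta(g_i) = g_i u_i$ for some $u_i \in R[G]$, and the Leibniz rule forces $\delta(g_i^{-1}) = -g_i^{-1} u_i$, so $\delta$ acts on a monomial $g_1^{a_1}\cdots g_n^{a_n}$ by the ``logarithmic derivative'' formula $\delta(m) = \big(\sum_i a_i u_i\big) m + (\text{coefficient-derivation terms})$.

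The key structural step is to isolate the coefficient-ring part. Write $m_n$ for the distinguished basis element corresponding to $g_n$. Applying $\delta$ to the identity $m_n \cdot m_n^{-1} = 1$ shows $\delta(1) = 0$ and that $\delta$ is compatible with inversion; applying $\delta$ to $m_n m_k = m_k m_n$ (commutativity of $G$) forces the elements $u_n$ to satisfy $[u_n, m_k] + \text{(derivation of }m_k\text{ conjugated)} = [u_k, m_n] + \cdots$, and since $G$ is abelian one extracts that the ``interesting'' part of each $u_n$ must be central. More precisely, I would argue that $\delta$ decomposes as $\delta = \delta_0 + \sum_{i} \partial^{(i)}$ where $\delta_0$ restricts to a derivation of $R$ on each coefficient (contributing the ${\mathcal LF}(R)$ factor, once one checks that on each element of $R[G]$ only finitely many of the "coordinate derivations" $D_n$ act nontrivially — this is exactly the local finiteness condition built into ${\mathcal LF}(R)$, and it holds because each element of $R[G]$ has finite support and lies in some $R[G_n]$), and the remaining part is the ``evaluation'' derivation sending $g_i \mapsto g_i z_i$ with $z_i \in Z(R[G])$ arbitrary, giving the $(Z(R[G]))^{\mathbb Z}$ factor. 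The Lie-ring structure then matches summand-by-summand: on the ${\mathcal LF}(R)$ side the bracket is componentwise $[D_n, K_n]$ as defined in the excerpt, and on the central side the bracket vanishes because central derivations of this evaluation type commute (their images are central, and the cross terms cancel by the abelian-ness of $G$). Assembling these identifications gives the Lie-ring isomorphism in $(i)$.

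For $(ii)$: an inner derivation $\partial_a$ of $R[G]$ with $a = \sum_{g} \alpha_g g$ has the property that it kills everything commuting with $a$; since $G$ is abelian, $\partial_a$ vanishes on all of $R[G_n]$ for $n$ large relative to $\supp(a)$ when combined with centrality considerations — more to the point, $\partial_a(m_i) = a m_i - m_i a = [a, m_i]$, and because $G$ is abelian this bracket is supported on $R[G]$ but, crucially, the resulting ``$u_i$'' pattern is $a - m_i^{-1} a m_i = 0$ in the abelian case only if $a$ is central, whereas a nonzero $R$-derivation such as $g_1 \mapsto g_1$, $g_i \mapsto 0$ for $i \geq 2$ produces a $u_1 = 1 \notin [a, \cdot]$-image for any $a$ since $[a, m_i] = 0$ for all $i$ when $G$ is abelian and $R$ is anything — i.e., every inner derivation of $R[G]$ that kills $R$ must be zero on the commutative group part, so no nonzero $R$-derivation is inner. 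I would phrase this cleanly as: $\IDer_R R[G] = 0$ because $R[G]$ is... wait — $R[G]$ need not be commutative since $R$ need not be; so instead I argue $[a, R[G]] \subseteq R[G]$ and for $a$ to induce an $R$-derivation we need $[a, R] = 0$, and then $\partial_a$ restricted to the subring $Z(R)[G]$, which is central-commutative, is zero, while a nonzero $R$-derivation is nonzero already on $Z(R)[G]$-generators by the formula in $(i)$; hence $\partial_a$ can never equal a nonzero $R$-derivation.

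The main obstacle I anticipate is making the direct-limit bookkeeping for the ${\mathcal LF}(R)$ factor fully rigorous: one must check that an arbitrary derivation, restricted to coefficients, really does give a sequence $\{D_n\}_{n\in\mathbb Z}$ that is locally finite (the index set is $\mathbb Z$, matching the "coordinates" of $G$ via the chosen basis, not $\mathbb N$ — one should fix a bijection $\mathbb Z \leftrightarrow \{g_1, g_2, \ldots\}$), and conversely that every locally finite sequence together with a central-evaluation datum integrates to a well-defined derivation of the union $R[G] = \bigcup R[G_n]$, i.e., that the definitions on the $R[G_n]$ are compatible under the inclusions. Once compatibility and local finiteness are pinned down, the bracket computation and part $(ii)$ are routine.
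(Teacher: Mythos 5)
Your route is genuinely different from the paper's, but it has a real gap at the step you defer to the end as ``direct-limit bookkeeping.'' The paper does not pick generators or realize $R[G]$ as a limit of Laurent polynomial rings: it enumerates \emph{all} elements of $G$ as $\{x_n\}_{n\in\mathbb{Z}}$, checks $\delta(x)\in Z(R)[G]$ for $x\in G$, extracts the coefficient derivations $D_n\in\Der R$ from $\delta(a)=\sum_n D_n(a)x_n$ (local finiteness being just finiteness of supports), and sends $\delta\mapsto(\{D_n\},\{\delta(x_n)\}_{n\in\mathbb{Z}})$. Note in particular that the second summand $(Z(R[G]))^{\mathbb{Z}}$ is indexed by all of $\mathbb{Z}$ (i.e.\ by all elements of $G$), not by a generating set as in your decomposition; if $G=\mathbb{Z}$ your construction produces one ``evaluation'' coordinate, not countably many. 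Your part $(ii)$ is fine and is essentially the paper's argument: if $\partial_a$ kills $R$ then $a\in C_{R[G]}(R)=Z(R)[G]\subseteq Z(R[G])$ because $G$ is abelian, so $\partial_a=0$ and $\IDer_RR[G]=0$.

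The concrete gap in $(i)$: first, $g_1,\dots,g_n$ need not be a basis of the free abelian group $G_n=\langle g_1,\dots,g_n\rangle$ (one can have $g_1=g_2^2$), so $R[G_n]$ is not the Laurent ring on those generators and the values $\delta(g_i)$ cannot be prescribed freely. Second, and more seriously, even after choosing a basis of each $G_n$ the inclusions $G_n\subset G_{n+1}$ need not be split --- a basis element of $G_n$ can become a proper power in $G_{n+1}$ --- so an assignment on $G_n$ extends to $G_{n+1}$ only if a divisibility condition in $R[G]^+$ holds. This compatibility problem is not routine and in general cannot be solved: for $G=\mathbb{Q}^+$ and $R=\mathbb{Z}$ the map $L_\delta: g\mapsto g^{-1}\delta(g)$ is a group homomorphism from a divisible group into the free abelian group $\mathbb{Z}[G]^+$, hence zero, so $\Der_RR[G]=0$ (the paper itself records $\Der_RR[\mathbb{Q}^+]=0$ immediately after its proof), whereas your claimed summand $(Z(R[G]))^{\mathbb{Z}}$ is nonzero. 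Thus the surjectivity you need fails for non-free $G$; your argument establishes an embedding of $\Der R[G]$ into ${\mathcal LF}(R)\oplus(Z(R[G]))^{\mathbb{Z}}$ and gives the isomorphism only when $G$ is free abelian on the chosen generators. (The same surjectivity issue is present, unaddressed, in the paper's own proof, but your write-up should not present it as a formality that ``once pinned down'' is routine.)
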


In section  4  we prove that the behavior of the  $R$-derivations of a group ring  $R[G]$ of  a nilpotent group   $G$ with some restriction is very similar to the nilpotency property of  derivations (see Proposition \ref{P:28}).

\vskip20pt

\section{Solders and central derivations}

If $H\leq G$, then $\mathfrak{I}_R(H)$ is a right ideal of $R[G]$ generated by the set $\{ 1-h\mid h\in H\}$.
If $H$ is normal in $G$, then $\mathfrak{I}_R(H)$ is an ideal of $R[G]$ and we have a ring isomorphism
\[
R[G]/\mathfrak{I}_R(H)\cong R[G/H]
\]
 \cite[Proposition 1, p. 17]{Bovdi_book}. Therefore, we have some Lie ring isomorphism
\[
\Der (R[G]/\mathfrak{I}_R(H))=\Der R[G/H].
\]

A ring $R$ is called {\it $n$-torsion-free} ($n\in \mathbb{N}$) if the implication $nx=0\Rightarrow x=0$ is true
for each  $x\in R$.

\begin{lemma} \label{L:8}
 Let $R$ be a ring. The  following conditions hold:
\begin{itemize}
\item[$(i)$] if $U$ is a subsemigroup of the multiplicative semigroup $(R,\cdot )$ of $R$ and $h: U\to R^+$ is a semigroup homomorphism, then $\delta_h: U\ni u\mapsto uh(u)\in R^+$  satisfies the Leibniz rule {\rm (see Eq. $(\ref{E:1})$)} if and only if $u[R,h(u)]=0$ for each $u\in U$;
\item[$(ii)$] if $h:(R,\cdot )\to R^+$ is a semigroup homomorphism, then we have:
\begin{itemize}
\item[$(a)$] $h(1)=0=h(0)$ and $2h(-1)=0$;
\item[$(b)$] if $R$ is $2$-torsion-free, then $h(a)=h(-a)$ for each $a\in R$;
\end{itemize}
\item[$(iii)$] if $U$ is a subgroup of the unit group $U(R)$ and $\delta \in \Der R$, then
    \[
    L_\delta :U\ni a\mapsto a^{-1}\delta (a)\in R^+
    \]
    is a group homomorphism if and only if $[a^{-1}\delta (a),U]=0$ for each  $a\in U$.
\end{itemize}
\end{lemma}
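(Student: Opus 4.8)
The plan is to verify all three parts by a direct computation from the defining identities, using throughout that $R^+$ is an \emph{abelian} group, so that a map into $R^+$ is a homomorphism as soon as it is additive (the order in which summands are produced is irrelevant). For $(i)$ I would expand both sides of the putative Leibniz identity on a pair $u,v\in U$. Since $h\colon(U,\cdot)\to(R^+,+)$ is a semigroup homomorphism, $h(uv)=h(u)+h(v)$, hence $\delta_h(uv)=uv\,h(u)+uv\,h(v)$ while $\delta_h(u)v+u\delta_h(v)=u h(u)v+uv\,h(v)$. Subtracting, $\delta_h$ satisfies \eqref{E:1} on $U$ exactly when $uv\,h(u)=u h(u)v$, i.e. when $u[v,h(u)]=0$ for all $u,v\in U$; since $U\subseteq R$ this is precisely the asserted condition $u[R,h(u)]=0$ for every $u\in U$, which settles $(i)$.

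For $(ii)$ I would simply feed the semigroup relations $1=1\cdot1$, $0=0\cdot0$ and $1=(-1)(-1)$ through the homomorphism $h$. The first two yield $h(1)=2h(1)$ and $h(0)=2h(0)$, hence $h(1)=h(0)=0$; the third yields $0=h(1)=h(-1)+h(-1)=2h(-1)$, which is $(a)$. If moreover $R$ is $2$-torsion-free, then $2h(-1)=0$ forces $h(-1)=0$, and so for every $a\in R$ we get $h(-a)=h\bigl((-1)a\bigr)=h(-1)+h(a)=h(a)$, which is $(b)$.

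For $(iii)$ I would compute, for units $a,b\in U$,
\[
L_\delta(ab)=(ab)^{-1}\delta(ab)=b^{-1}a^{-1}\bigl(\delta(a)b+a\delta(b)\bigr)=b^{-1}\bigl(a^{-1}\delta(a)\bigr)b+b^{-1}\delta(b)=b^{-1}L_\delta(a)b+L_\delta(b),
\]
using \eqref{E:1} and the invertibility of $a,b$. Since $R^+$ is abelian, $L_\delta$ is a group homomorphism $U\to R^+$ if and only if the conjugation term collapses, i.e. $b^{-1}L_\delta(a)b=L_\delta(a)$ for all $a,b\in U$, equivalently $L_\delta(a)b=bL_\delta(a)$, equivalently $[a^{-1}\delta(a),U]=0$ for each $a\in U$; the normalization $L_\delta(1)=\delta(1)=0$ is automatic from $\delta(1)=\delta(1\cdot1)=2\delta(1)$.

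I do not expect any genuine obstacle: all three parts are formal consequences of the Leibniz rule \eqref{E:1} and of the homomorphism property of $h$. The only points that call for a little care are recalling that ``homomorphism into $R^+$'' imposes additivity only — so in $(iii)$ it is the conjugation defect, not an ordering of summands, that must vanish — and, in $(i)$, keeping track that the Leibniz identity is being imposed only on pairs from $U$, so that the equivalence reads off precisely from the single commutator term $u[v,h(u)]$.
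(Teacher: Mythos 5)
Your proof is correct and follows essentially the same direct computations as the paper's own proof of this lemma: expand $\delta_h(uv)$ using the homomorphism property in $(i)$, feed $1=1^2$, $0=0^2$, $1=(-1)^2$ through $h$ in $(ii)$, and isolate the conjugation defect $b^{-1}[a^{-1}\delta(a),b]$ in $(iii)$. The one caveat --- that imposing the Leibniz rule only on pairs from $U$ literally yields $u[U,h(u)]=0$ rather than $u[R,h(u)]=0$ in the forward direction of $(i)$, which your phrase ``since $U\subseteq R$'' does not actually bridge --- is present in the paper's argument in exactly the same form, so your treatment matches it.
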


\begin{proof} Let  $a,b \in U$. \quad  $(i)$ It is easy to see  that
\[
ah(a)b+abh(b)=\delta_h(a)b+a\delta_h(b)=\delta_h(ab)=abh(ab)=abh(a)+abh(b)
\]
 if and only if $a(h(a)b-bh(a))=0$, so  the proof is done.

$(ii)$ Let   $h:(R,\cdot )\to R^+$ be  a semigroup homomorphism.

$(a)$ Clearly,  $h(1)=h(1^2)=h(1)+h(1)$  and   $h(0)=h(0^2)=h(0)+h(0)$,  so $h(1)=0=h(0)$. Furthermore,
$0=h(1)=h((-1)\cdot (-1))=h(-1)+h(-1)$.

$(b)$ Since $h(-1)=0$, we get  that
$h(-a)=h((-1)a)=h(-1)+h(a)=h(a)$ for each $a\in R$.

$(iii)$ It is easy to check that
\[
\begin{split}
 L_\delta (a)+L_\delta (b)&=L_\delta (ab)=(ab)^{-1}\delta (ab)=b^{-1}a^{-1}a\delta (b)+b^{-1}a^{-1}\delta (a)b\\
 &=b^{-1}\delta (b)+b^{-1}ba^{-1}\delta (a)+b^{-1}[a^{-1}\delta (a),b]\\
 &=L_\delta (a)+L_\delta (b)+b^{-1}[a^{-1}\delta (a),b]
\end{split}
\]
 if and only if $b^{-1}[a^{-1}\delta (a),b]=0$. Thus  $[a^{-1}\delta
(a),U]=0$.
\end{proof}

A  map $h: R\to R$ is called {\it a solder} of $R$ (see Nowicki \cite{Nowicki}, p.~46) if:
\begin{itemize}
\item[$(i)$] $(a+b)h(a+b)=a h(a)+b h(b)$ \qquad ($a,b\in R$);
\item[$(ii)$] $h(ab)=h(a)+h(b)$ \qquad ($a,b\in R\setminus \{0\}$).
\end{itemize}

\begin{lemma} \label{L:9}
Let  $h$ be a  solder of a ring $R$. The following conditions  hold:
\begin{itemize}
\item[$(i)$] $h(xy)=h(yx)$ for each  $x,y\in R$;
\item[$(ii)$] the rule $\delta_h:R\ni x\mapsto xh(x)\in R$ determines a derivation $\delta_h \in \Der R$ if and only if $x[h(x),R]=0$ for  each $x\in R$;
\item[$(iii)$] if $R$ is a prime ring, then $\delta_h \in \Der R$ if and only if $h(R)\subseteq Z(R)$ {\rm(such solder is called {\it central})};
\item[$(iv)$] if $R$ is $2$-torsion-free, then $h(2)=0$;
\item[$(v)$] if $e^2=e\in R$, then $h(e)=0$;
\item[$(vi)$] if $xy=1$ for some $x,y\in R$, then $h(x)=-h(y)$; in particular, if $x^2=1$, then $2h(x)=0$.
\end{itemize}
\end{lemma}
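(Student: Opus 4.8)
The plan is to first record two general observations and then dispatch the six items. From the additivity condition $(a+b)h(a+b)=ah(a)+bh(b)$ in the definition of a solder, the map $\delta_h\colon x\mapsto xh(x)$ is automatically additive, since $\delta_h(a+b)=(a+b)h(a+b)=ah(a)+bh(b)=\delta_h(a)+\delta_h(b)$; and the multiplicativity condition $h(ab)=h(a)+h(b)$ applied to $a=b=1$ (we may assume $R\neq 0$) gives $h(1)=h(1)+h(1)$, so $h(1)=0$. For~(i): if $x,y\in R\setminus\{0\}$, then $h(xy)=h(x)+h(y)=h(y)+h(x)=h(yx)$ because $R^+$ is abelian; and if $x=0$ or $y=0$, then $xy=yx=0$ and there is nothing to prove. (The value $h(0)=0$, needed to make this fully clean, comes out of~(v).)

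For~(ii), since $\delta_h$ is already additive, the condition $\delta_h\in\Der R$ is equivalent to the Leibniz identity $xy\,h(xy)=xh(x)\,y+xy\,h(y)$ holding for all $x,y\in R$. I would verify this by cases: if $x=0$ or $y=0$ both sides vanish; if $x,y\neq 0$, then $h(xy)=h(x)+h(y)$ turns the left-hand side into $xy\,h(x)+xy\,h(y)$, so the identity collapses to $xy\,h(x)=xh(x)\,y$, i.e.\ $x[h(x),y]=0$ (this stays correct when $xy=0$, since then $xy\,h(x)=0$ also forces $xh(x)y=0$). Letting $(x,y)$ range over $R\times R$ yields the stated equivalence $\delta_h\in\Der R\iff x[h(x),R]=0$ for every $x\in R$.

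For~(iii), one direction is immediate from~(ii): if $h(R)\subseteq Z(R)$ then $[h(x),R]=0$, hence $x[h(x),R]=0$ and $\delta_h\in\Der R$. For the converse, I would start from $x[h(x),r]=0$ for all $r\in R$, replace $r$ by $rs$, and expand $[h(x),rs]=[h(x),r]s+r[h(x),s]$; the first summand is killed by left multiplication by $x$, leaving $xr[h(x),s]=0$ for all $r,s$, that is $xR[h(x),s]=0$. Since $R$ is prime (so $aRb=0$ forces $a=0$ or $b=0$, using that $R$ has a unity), for $x\neq 0$ this gives $[h(x),s]=0$ for all $s$, hence $h(x)\in Z(R)$; together with $h(0)=0$ from~(v), this is $h(R)\subseteq Z(R)$. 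I expect this prime-ring step to be the only genuinely non-formal point in the whole lemma and the place needing the most care.

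Finally, (iv), (v) and (vi) follow quickly from $h(1)=0$ and the solder identities. For~(v): if $e^2=e$ then $h(e)=h(e^2)=h(e)+h(e)$, so $h(e)=0$ (in particular $h(0)=0$). For~(iv): the additivity condition with $a=b=1$ gives $2h(2)=(1+1)h(1+1)=h(1)+h(1)=0$, so $h(2)=0$ whenever $R$ is $2$-torsion-free. For~(vi): if $xy=1$ then $x,y\neq 0$, so $0=h(1)=h(xy)=h(x)+h(y)$, i.e.\ $h(x)=-h(y)$; and when $x^2=1$ the choice $y=x$ yields $2h(x)=0$.
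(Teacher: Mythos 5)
Your proof is correct and follows essentially the same route as the paper's: every part reduces to the same manipulations of the solder identities (additivity of $\delta_h$, $h(1)=0$, $h(e^2)=2h(e)$, etc.), and part $(iii)$ comes down, exactly as in the paper, to establishing $xR[h(x),R]=0$ and invoking primeness. The only cosmetic difference is that in $(iii)$ you reach $xR[h(x),s]=0$ by substituting $rs$ and expanding the commutator, while the paper applies part $(ii)$ to the product $xy$ together with $h(xy)=h(x)+h(y)$; both are one-line derivations of the same key identity.
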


\begin{proof} $(i)$ Since $h(0)=0$ and $\delta_h(xy)=xyh(x)+xyh(y)$ for  each $x,y\in R$, the result follows.

 $(ii)$ It follows from Lemma \ref{L:8}$(i)$ because $\delta_h$ is additive.

$(iii)$ If  $x,y\in R$, then $\delta_h$ is a derivation $R$ if and only if
\[
xy[h(x),R]=xy[h(x),R]+xy[h(y),R]=xy[h(xy),R]=0
\]
 by the part $(ii)$.  Thus  $xR[h(x),R]=0$ and so  $[h(x),R]=0$ by the  primeness of $R$.

$(iv)$ Obviously,   $0=1\cdot h(1)+1\cdot
h(1)=(1+1)h(1+1)=2h(2)$,  therefore $h(2)=0$.

$(v)$ In as much as $h(e)=h(e^2)=h(e)+h(e)$, we have that $h(e)=0$.

$(vi)$ Evidently,  $0=h(1)=h(xy)=h(x)+h(y)$,  so $h(x)=-h(y)$. \end{proof}

The  subgroup of a group $G$ generated by the set $\{ g^n\mid g\in G\}$ we denote  by $G^n$.
The exponent of a torsion group $G$ is the following number $\exp(G)=\min\{n\in \mathbb{N}\mid x^n=1, \forall x\in G\}$.

\begin{proposition} \label{P:10}
Let $R$ be a ring,  $G$ a group and   $n\in \mathbb{N}$. The
following conditions hold:
\begin{itemize}
\item[$(i)$] $\ZDer R:=\{ \delta \in \Der R \mid \delta
(R)\subseteq Z(R)\}$ is an ideal of the Lie ring $\Der R$;
\item[$(ii)$] if $\delta \in \ZDer R$, then $\delta ([R,R])=0$;
\item[$(iii)$] if $R$ is $n$-torsion-free and the exponent $\exp (G)=n$, then $\delta (G)=0$ for  each $\delta \in \ZDer  R[G]$;
\item[$(iv)$] if  $nR=0$  and  $G$ is an
 abelian torsion-free group, then $\delta (G^n)=0$ for  each $\delta \in \ZDer R[G]$.
\end{itemize}
\end{proposition}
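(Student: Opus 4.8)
The plan is to obtain all four items by direct computation; the only step that is not purely formal is the standard fact that every derivation of a ring maps its centre into itself. For $(i)$, note first that $\ZDer R$ is visibly closed under addition and negation, so the real content is that it is a Lie ideal of $\Der R$. I would first record the auxiliary fact that if $D\in\Der R$ and $z\in Z(R)$, then $D(z)\in Z(R)$: applying $D$ to $zs=sz$ (for $s\in R$) and cancelling the equal summands $zD(s)=D(s)z$ leaves $D(z)s=sD(z)$ for all $s$. Granting this, for $\delta\in\ZDer R$, $D\in\Der R$ and $r\in R$ we get $[\delta,D](r)=\delta(D(r))-D(\delta(r))$, whose first term is central because $\delta\in\ZDer R$ and whose second term is central by the auxiliary fact applied to $z=\delta(r)$; hence $[\delta,D]\in\ZDer R$, so $\ZDer R$ is an ideal.

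For $(ii)$, since by definition $[R,R]$ is the additive subgroup generated by the commutators $ab-ba$ and $\delta$ is additive, it suffices to check $\delta(ab-ba)=0$ for all $a,b\in R$. Expanding via the Leibniz rule gives $\delta(a)b+a\delta(b)-\delta(b)a-b\delta(a)$, and since $\delta(a),\delta(b)\in Z(R)$ the four terms cancel in pairs.

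For $(iii)$ and $(iv)$ the common ingredient is the identity $\delta(g^{n})=n\,g^{n-1}\delta(g)$ for $g\in G$ and $\delta\in\ZDer R[G]$, which follows by induction on $n$ from the Leibniz rule and the centrality of $\delta(g)$ in $R[G]$, using $\delta(1)=0$ (valid for any derivation). In $(iii)$ the hypothesis $\exp(G)=n$ gives $g^{n}=1$, so $n\,g^{n-1}\delta(g)=0$; right-multiplying by $g$ yields $n\,\delta(g)=0$, and since $R$ is $n$-torsion-free so is $R[G]$ (from $n\alpha=0$ one gets $n\alpha_{g}=0$, hence $\alpha_{g}=0$, for every coefficient of $\alpha$), so $\delta(g)=0$ for all $g\in G$. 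In $(iv)$ the hypothesis $nR=0$ forces $nR[G]=0$, so the identity already gives $\delta(g^{n})=g^{n-1}\bigl(n\delta(g)\bigr)=0$ for every $g\in G$; since $\{x\in G\mid\delta(x)=0\}$ is a subgroup of $G$ (closed under products by the Leibniz rule, and under inversion because $0=\delta(1)=\delta(x)x^{-1}+x\delta(x^{-1})$) and contains every $g^{n}$, it contains the subgroup $G^{n}$ they generate, i.e. $\delta(G^{n})=0$.

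I do not anticipate a genuine obstacle; the only places that require a little care are the centrality step in $(i)$ and, in $(iii)$--$(iv)$, verifying that the torsion conditions on $R$ pass correctly to the coefficients of $R[G]$ before the integer $n$ is cancelled.
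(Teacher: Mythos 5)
Your proposal is correct and follows essentially the same route as the paper: part $(i)$ via the standard fact that derivations preserve the centre, part $(ii)$ by direct Leibniz expansion, and parts $(iii)$--$(iv)$ via the identity $\delta(g^{n})=n\,g^{n-1}\delta(g)$, which is exactly the content of the paper's Eq.~(2) expressed through the map $L_\delta$ of Lemma~1$(iii)$. The only (harmless) differences are presentational: you make the centrality of $D(Z(R))$ and the passage of the torsion hypotheses to coefficients explicit, where the paper leaves them implicit.
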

\begin{proof} Let $\delta \in \ZDer R[G]$ and  $d\in \Der R$.

$(i)$ Indeed, $[\delta ,d](r)=\delta (d(r))-d(\delta (r))\in Z(R)$
for any $r\in R$ and the result holds.

$(ii)$ It follows in view of the part $(i)$.

$(iii)$ The map $L_\delta :{\langle g\rangle}\to R[G]^+$ is a group homomorphism for any $\delta \in \ZDer R[G]$   and $g\in G$ by Lemma \ref{L:8}$(iii)$. Using facts that  $g^n=1$ for $g\in G$ and
\begin{equation} \label{E:2}
g^{-n}\delta (g^n)=L_\delta (g^n)=nL_\delta (g),
\end{equation}
we deduce that $L_\delta (g)=0$, so  $\delta (g)=0$.

$(iv)$  In as much as $\delta (G)\subseteq \ZDer R[G]$, the map $L_\delta: G\to R^+$ is a group homomorphism by Lemma
\ref{L:8}$(iii)$.  Hence   Eqs.   $(\ref{E:2})$ hold for any $g\in G$
and  $n\in\mathbb{N}$. Since $nL_\delta (g)=0$, we deduce that
$\delta (g^n)=0$. \end{proof}

A  derivation $\delta \in
\Der R$ is called {\it central} if $\delta (R)\subseteq Z(R)$ (see  \cite{Togo}). The
set  of all central $R$-derivations of $R[G]$ we denote by
$\ZDer_R R[G]$. It is easy to check that
\[
\ZDer_RR[G]=(\ZDer R[G])\cap (\Der_RR[G]).
\]
Clearly,  $Z(G)\leq Z(R[G])$, $Z(R)[Z(G)]\subseteq Z(Z(R)[G])\subseteq Z(R[G])$ and $d(Z(R))\subseteq Z(R)$
for any  $d\in \Der R$.

\begin{proposition} \label{P:11}
Let $R$ be a ring and  let $G$ be a group. The
following conditions hold:
\begin{itemize}
\item[$(i)$] if $\delta \in \Der_RR[G]$, then $\delta (g)\in
Z(R)[G]$ for any $g\in G$;
\item[$(ii)$] $\ZDer_RR[G]$ is an ideal of the Lie ring $\Der_R R[G]$;
\item[$(iii)$] if $G$ is a torsion  group such that $\pi (F(R))\cap
\pi (G)=\varnothing$  (respectively $G$ is an abelian  divisible torsion-free group
and $nR=0$ for some  $n\in\mathbb{N}$), then $\ZDer_R R[G]=0$;
\item[$(iv)$] if  $\pi (F(R))\cap \pi (G)=\varnothing$, then $\delta (\tau
G)=0$ for any $\delta \in \ZDer_R R[G]$;
 \item[$(v)$] if $G$ is a
torsion group and $\pi (F(R))\cap \pi (G)=\varnothing$, then
$\Der_RR[G]=0$ if and only if $\delta (G)=0$ for any $\delta \in
\Der R[G]$.
\end{itemize}
\end{proposition}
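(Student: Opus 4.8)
The plan is to prove the items in the order $(i)$, $(ii)$, $(iv)$, $(iii)$, $(v)$, since $(iii)$ for torsion $G$ is the special case $\tau G=G$ of $(iv)$, while $(v)$ is essentially a self-contained observation.

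\emph{Item $(i)$.} The defining property of an $R$-derivation is $\delta(R)=0$, and in $R[G]$ the subring $R$ centralises $G$. Hence, via the Leibniz rule, $\delta(rg)=\delta(gr)$ collapses to $r\delta(g)=\delta(g)r$ for every $r\in R$; comparing the $R$-coefficients in $\delta(g)=\sum_{h}\alpha_h h$ then forces each $\alpha_h\in Z(R)$, which is the claim. \emph{Item $(ii)$} is immediate from Proposition~\ref{P:10}$(i)$: $\ZDer R[G]$ is an ideal of $\Der R[G]$ and $\Der_RR[G]$ is a Lie subring of it, so the intersection $\ZDer_RR[G]=\ZDer R[G]\cap\Der_RR[G]$ is an ideal of $\Der_RR[G]$.

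\emph{Items $(iv)$ and $(iii)$.} I would fix $\delta\in\ZDer_RR[G]$ and $g\in\tau G$ of order $m$. Since $\langle g\rangle\leq U(R[G])$ and $\delta$ has values in $Z(R[G])$, the commutator condition in Lemma~\ref{L:8}$(iii)$ holds automatically, so $L_\delta\colon\langle g\rangle\to R[G]^+$ is a group homomorphism; then $0=L_\delta(1)=L_\delta(g^m)=mL_\delta(g)$, whence $m\delta(g)=0$ after multiplying by $g$. By $(i)$ the coefficients of $\delta(g)$ lie in $Z(R)$ and are annihilated by $m$; every prime divisor of $m=|g|$ lies in $\pi(G)$ and hence, by hypothesis, outside $\pi(F(R))$, so none of these coefficients is a nonzero element of finite additive order — they vanish, and $\delta(g)=0$. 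This gives $\delta(\tau G)=0$, which is $(iv)$. For $(iii)$: if $G$ is torsion then $\delta(G)=\delta(\tau G)=0$, and with $\delta(R)=0$ this forces $\delta=0$ on the ring $R[G]$; in the abelian divisible torsion-free case, Proposition~\ref{P:10}$(iv)$ gives $\delta(G^n)=0$, and $G^n=G$ by divisibility, so again $\delta=0$.

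\emph{Item $(v)$.} One implication is trivial: an $R$-derivation annihilates both $R$ and $G$, so if every $\delta\in\Der R[G]$ kills $G$, then every $R$-derivation is zero. For the converse, suppose $\Der_RR[G]=0$; then $\IDer_RR[G]=0$, and since each $g\in G$ centralises $R$ the inner derivation $\partial_g$ is an $R$-derivation, so $\partial_g=0$, i.e.\ every $g$ is central and $G$ is abelian. Now take any $\delta\in\Der R[G]$. With $G$ abelian, the coefficient-wise map $D\colon\sum_h r_h h\mapsto\sum_h\delta(r_h)h$ is a derivation of $R[G]$ — commutativity of $G$ is exactly what makes the two terms of the Leibniz rule coincide — and $D$ agrees with $\delta$ on $R$; hence $\delta-D\in\Der_RR[G]=0$, so $\delta=D$ and $\delta(g)=D(g)=\delta(1)g=0$. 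This last part is the only place that needs an idea rather than bookkeeping: one must notice that $\Der_RR[G]=0$ already forces $G$ to be abelian, and that abelianness is precisely the hypothesis under which a derivation can be split off a coefficient-wise part. Everything else — the coefficient comparisons, the prime-divisor argument in $(iv)$, and the Leibniz check for $D$ — is routine.
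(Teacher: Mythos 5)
Your proposal is correct throughout: items $(i)$, $(ii)$ and $(iv)$ match the paper's arguments (the paper verifies $(ii)$ by direct computation of $[\delta,\mu]$ rather than by citing Proposition~\ref{P:10}$(i)$, and it proves $(iii)$ and $(iv)$ via the identity $0=\delta(g^n)=ng^{n-1}\delta(g)$ rather than via $L_\delta$, but these are the same computation; your explicit coefficient-by-coefficient use of $\pi(F(R))\cap\pi(G)=\varnothing$ to pass from $n\delta(g)=0$ to $\delta(g)=0$ is in fact more careful than the paper's ``this gives that $\delta(g)=0$''). The genuine divergence is in $(v)(\Rightarrow)$. The paper argues: $\Der_RR[G]=0$ forces $G\subseteq Z(R[G])$, hence $\delta(G)\subseteq Z(R[G])$ for every $\delta\in\Der R[G]$, and then the torsion of $G$ together with $\pi(F(R))\cap\pi(G)=\varnothing$ kills $\delta(g)$ via $nL_\delta(g)=0$. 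You instead observe that once $G$ is abelian the coefficientwise map $D\colon\sum_h r_hh\mapsto\sum_h\delta(r_h)h$ is a derivation agreeing with $\delta$ on $R$, so $\delta-D\in\Der_RR[G]=0$ and $\delta=D$ annihilates $G$. Your Leibniz check for $D$ is sound (it needs $h\in Z(R[G])$, which abelianness of $G$ supplies), and your route has the bonus of not using the torsion or coprimality hypotheses in that direction at all --- it shows $\Der_RR[G]=0$ alone forces $\delta(G)=0$ for every $\delta\in\Der R[G]$. The paper's route is shorter to state but genuinely needs both hypotheses; yours isolates a reusable splitting $\delta=D+(\delta-D)$ of derivations of a commutative-base group ring of an abelian group, which is essentially the mechanism behind Theorem~\ref{T:7}.
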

\begin{proof} $(i)$  Obviously,  because $\delta (g)r=\delta (gr)=\delta (rg)=r\delta (g)$ for any $g\in G$ and $r\in R$.

$(ii)$ If $\delta \in \Der_R R[G]$ and  $\mu \in \ZDer_RR[G]$, then
\[
[\delta ,\mu ](x)=\delta (\mu
(x))-\mu (\delta (x))\in Z(R[G]) \ \mbox{and}\ \ [\delta ,\mu ](r)=0
\]
for any $x\in R[G]$ and $r\in R$.
Hence $[\delta ,\mu ]\in \ZDer_RR[G]$.

$(iii)$ Suppose that  $\delta \in \ZDer_R R[G]$ and $g\in G$.

$a)$ If $G$ is torsion and $n=|g|$, then
\begin{equation} \label{E:3}
0=\delta (1)=\delta (g^n)=ng^{n-1}\delta(g),
\end{equation}
this  gives that
$\delta(g) =0$, so $\delta =0$.

$b)$ Assume that $G$ is abelian  divisible torsion-free and $nR=0$. Then
$nL_\delta (g)=0$ and Eqs. $(\ref{E:2})$ imply  that $\delta
(g^n)=0$ and thus $\delta =0$.

$(iv)$ If  $g\in \tau G$ and  $|g|=n$, then  $\delta (g)=0$ by Eqs. ($\ref{E:3}$).

$(v)$ \  $(\Rightarrow )$ If $\Der_RR[G]=0$, then $\partial_g=0$  for any
$g\in G$ and so
 $G\subseteq Z(R[G])$. Hence $\delta (G)\subseteq Z(R[G])$  for any $\delta \in \Der R[G]$. Since  $g^n=1$
for some $n\in \mathbb{N}$ and $L_\delta :\langle g\rangle \to
R[G]^+$ is a group homomorphism by Lemma  \ref{L:8}$(iii)$, we
have that Eqs. $(\ref{E:2})$ imply that $nL_\delta (g)=0$. Hence $L_\delta (g)=0$, because $\pi (F(R))\cap \pi (G)=\varnothing$. Therefore $\delta (g)=0$, what gives $\delta (G)=0$.

$(\Leftarrow )$ Obviously.
\end{proof}

\begin{proof}[Proof of Theorem $\ref{T:2}$]
Let $\delta \in \Der_RR[G]$. Then $\delta (g)\in Z(R)[G]\subseteq Z(R[G])$ for any $g\in G$ by Proposition \ref{P:11}$(i)$. In as much as $R$ is $|g|$-torsion-free, we conclude that $\delta (g)=0$ be the same argument as in the proof of Proposition \ref{P:10}$(iii)$. Hence $\delta (G)=0$ and consequently $\delta =0$.
\end{proof}

Since $\Der R[G]=\Der_RR[G]$ in a differentially trivial ring $R$,  Theorem \ref{T:2} implies the following.

\begin{corollary} \label{C:16}
Let  $R$ be  a  differentially trivial ring. If  $G$ is a torsion abelian group  such that each prime $p\in \pi (G)$ is invertible in $R$, then $\Der R[G]=0$.
\end{corollary}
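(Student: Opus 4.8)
The plan is to derive Corollary \ref{C:16} directly from Theorem \ref{T:2} by observing that the hypothesis ``differentially trivial'' removes the distinction between $R$-derivations and arbitrary derivations of the group ring. Concretely, I would first recall the definition: $R$ is differentially trivial means $\Der R = 0$, i.e., the only derivation of $R$ is the zero map. The key claim is that for such $R$, one has $\Der R[G] = \Der_R R[G]$ for every group $G$; that is, every derivation of $R[G]$ automatically kills $R$.

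The main step is to prove this claim. Let $\delta \in \Der R[G]$ be arbitrary. The idea is that $\delta$ restricted to the coefficient ring $R$ (sitting inside $R[G]$ as $R\cdot 1$, where $1$ is the identity of $G$) should again be a derivation of $R$. First I would note that $\delta(1) = 0$, since $\delta(1) = \delta(1\cdot 1) = \delta(1)\cdot 1 + 1\cdot\delta(1) = 2\delta(1)$, hence $\delta(1) = 0$ — wait, this only gives $\delta(1) = 0$ if we can cancel, so more carefully: $\delta(1) = \delta(1) + \delta(1)$ forces $\delta(1) = 0$ outright in any ring. Then for $r, s \in R$, $\delta(rs) = \delta(r)s + r\delta(s)$, so $\delta|_R$ satisfies the Leibniz rule on $R$. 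The one subtlety is that a priori $\delta(r)$ lies in $R[G]$, not necessarily in $R$; so I need to check $\delta(r) \in R$ for $r \in R$. This follows because $R$ is central in $R[G]$ only up to... actually no — $R$ need not be central in $R[G]$ if $R$ is noncommutative. So instead I would argue as follows: the augmentation-type projection or, more directly, the fact that $R = R\cdot 1$ is a subring and $\delta(R\cdot 1) \subseteq$ ? The cleanest route: consider the map $\pi: R[G] \to R$ sending $\sum \alpha_g g \mapsto \alpha_1$ (coefficient of the identity); this is not a ring homomorphism in general, but composing appropriately... Alternatively, and most simply, I can invoke that $R[G]$ contains $R$ as a unital subring and restrict: define $d := $ the $R$-component of $\delta|_R$. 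The honest fix is: since $R$ is a subring with the same identity, and we only need $d: R \to R[G]$ with $d(rs) = d(r)s + rd(s)$; then project onto the identity-component, which IS an $(R,R)$-bimodule map $R[G] \to R$, and one checks the composite $\pi \circ \delta|_R: R \to R$ is a derivation of $R$ because $\pi$ is an $(R,R)$-bimodule homomorphism splitting the inclusion. Hence $\pi\circ\delta|_R = 0$ by differential triviality. Combined with showing the off-identity components also vanish (using that $\delta(r)$ must commute appropriately with all $s \in R$ since $rs = sr$-relations are respected... actually this requires $R$ central, which fails), I would instead restrict attention to how the corollary is stated and realize that the intended reading is that the statement simply cites Theorem \ref{T:2} after noting $\Der R[G] = \Der_R R[G]$, which the excerpt asserts as a known fact (``Since $\Der R[G]=\Der_RR[G]$ in a differentially trivial ring $R$'').

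So the realistic proof proposal is short: First, observe that in a differentially trivial ring $R$ every derivation of $R[G]$ is an $R$-derivation — this is exactly the equality $\Der R[G] = \Der_R R[G]$ flagged immediately before the corollary, whose justification is that restricting any $\delta \in \Der R[G]$ to $R$ and projecting to the identity-coefficient yields a derivation of $R$, which must be zero, and a further argument handles the remaining coefficients. Second, apply Theorem \ref{T:2}: since $G$ is a torsion abelian group and each prime $p \in \pi(G)$ is invertible in $R$, we get $\Der_R R[G] = 0$. Combining the two gives $\Der R[G] = 0$, as desired.

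The main obstacle I anticipate is precisely the assertion $\Der R[G] = \Der_R R[G]$ for differentially trivial $R$ when $R$ is noncommutative: showing that $\delta(R) \subseteq R$ (not just $\subseteq R[G]$) and then that $\delta|_R$ is genuinely a derivation of $R$. If $R$ is commutative this is immediate since $R$ is central; in the noncommutative case one uses that the projection onto the identity component is an $(R,R)$-bimodule retraction, so the composite is a derivation $R \to R$ and hence zero, and then a separate argument — expanding $\delta(r)$ in the group basis and using that conjugation by group elements together with $\delta(rs)=\delta(sr)$-type identities pins down each coefficient — forces $\delta(r) = 0$. Given that the paper states this equality as established context, the corollary itself is then a one-line deduction from Theorem \ref{T:2}.
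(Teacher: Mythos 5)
Your proposal is correct and follows exactly the paper's route: the corollary is the one-line combination of the equality $\Der R[G]=\Der_R R[G]$ (asserted in the sentence preceding the corollary) with Theorem~\ref{T:2}. The only loose end is the noncommutative case of that equality, which you flag but do not fully close; it is in fact immediate and needs none of the machinery you contemplate, because in $R[G]$ every $g\in G$ commutes with every $r\in R$, so writing $\delta(r)=\sum_{g\in G} d_g(r)g$ for $\delta\in\Der R[G]$, the identity $\delta(rs)=\delta(r)s+r\delta(s)$ splits coefficientwise into $d_g(rs)=d_g(r)s+rd_g(s)$ for every $g$, i.e.\ each $d_g\in\Der R=0$, whence $\delta(R)=0$. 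Your worry that the argument ``requires $R$ central'' is therefore unfounded: centrality of $R$ in $R[G]$ is irrelevant; only the commutation of $R$ with the group basis is used.
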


\section{Locally inner derivations}

A derivation $\delta \in \Der R[G]$ is called {\it locally inner}
(see \cite{Ikeda_Kawamoto, Kawamoto}) if, for every finite subset $F\subseteq
R[G]$, there exists an inner derivation $\partial_x\in \IDer R[G]$
(depending on $\delta$ and $F$) such that
\[
\delta_{|F}=({\partial_x})_{|F}.
\]
The set  $\LDer R[G]$ of all
locally inner derivations of $R[G]$ is an ideal of the Lie ring
$\Der R[G]$ (see \cite{Ikeda}) and
\[
\IDer R[G]\subseteq \LDer R[G]\subseteq \Der R[G]\quad  \text{and}\quad
\IDer_R R[G]\subseteq \LDer_R R[G]\subseteq \Der_R R[G].
\]

Now as a light extension of \cite[Theorem 2.1]{Ikeda_Kawamoto} we have.

\begin{lemma} \label{L:11}
Let $R$ be a  ring and let $G$ be a locally finite group.
If each prime $p\in \pi (G)$ is invertible in $R$, then all $R$-derivation of $R[G]$ is locally inner.
\end{lemma}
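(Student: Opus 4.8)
The plan is to reduce to the case of a finite group and then invoke the classical result that an $R$-derivation of a group ring of a finite group whose order is invertible in $R$ is inner (this is the content of \cite[Theorem 2.1]{Ikeda_Kawamoto}, and the "light extension" here is just that we do not need the full order of $G$ invertible, only the primes in $\pi(G)$). Concretely, let $\delta \in \Der_R R[G]$ and let $F \subseteq R[G]$ be a finite subset. Each element of $F$ has finite support, so the union of the supports together with the support of $\delta(f)$ for $f \in F$ is a finite subset $S$ of $G$. Since $G$ is locally finite, the subgroup $H := \langle S \rangle$ is finite, and by construction $F \subseteq R[H]$ and $\delta(R[H]) \subseteq R[H]$, so $\delta$ restricts to an $R$-derivation $\delta_H$ of $R[H]$. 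Here one must be a little careful: one should choose $S$ to contain the supports of $\delta(g)$ for each group element $g$ occurring in the supports of elements of $F$, so that $\delta(R[H]) \subseteq R[H]$ genuinely holds; this is possible because only finitely many such $g$ are involved.

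Next, because every prime $p \in \pi(G)$ is invertible in $R$ and $H$ is a finite subgroup of $G$, the order $|H|$ is a product of primes from $\pi(G)$ and hence is invertible in $R$. Therefore \cite[Theorem 2.1]{Ikeda_Kawamoto} applies to $R[H]$ and yields an element $x \in R[H]$ with $\delta_H = \partial_x$ on $R[H]$. In particular $\delta(f) = xf - fx = \partial_x(f)$ for every $f \in F$, with $x \in R[H] \subseteq R[G]$. Since $F$ was an arbitrary finite subset of $R[G]$, this is precisely the statement that $\delta$ is locally inner in the sense defined above. As $\delta$ was an arbitrary $R$-derivation of $R[G]$, every $R$-derivation of $R[G]$ is locally inner.

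The only real point requiring attention — and the step I would expect a referee to scrutinize — is the invertibility argument for $|H|$: one needs that $H$ finite and $H \leq G$ forces every prime divisor of $|H|$ to lie in $\pi(G)$, which is immediate from Cauchy's theorem (an element of order $p$ exists in $H$, hence in $\tau G$), combined with the hypothesis that each such prime is invertible in $R$; then $|H| \cdot 1_R$ is a unit. Everything else is bookkeeping: closing up a finite set under the (finitely many relevant values of the) derivation to get an invariant finite-group subalgebra, and transporting the inner-derivation element $x$ back up to $R[G]$. No new estimates or constructions beyond \cite[Theorem 2.1]{Ikeda_Kawamoto} are needed.
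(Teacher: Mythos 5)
There is a genuine gap at the step you yourself flag as ``bookkeeping'': the construction of a finite subgroup $H$ with $F\subseteq R[H]$ \emph{and} $\delta(R[H])\subseteq R[H]$. Taking $S$ to be the union of the supports of the elements of $F$ together with the supports of $\delta(g)$ for those finitely many $g$ does not make $H:=\langle S\rangle$ invariant under $\delta$: the subgroup $H$ contains new elements $h\notin S$, and $\delta(h)$ may have support outside $H$. Iterating the closure, say $S_{n+1}:=\langle S_n\rangle\cup\bigcup_{g\in\langle S_n\rangle}\supp\delta(g)$, need not terminate after finitely many steps; in general it stabilizes only at a countable, possibly infinite, subgroup. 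The paper's own Example (the Burkov derivation $\delta=[\,\cdot\,,u]$ on $R[FS(\mathbb{Q})]$, with $u$ an infinite formal sum) shows this is not a hypothetical worry: there the supports of $\delta(h)$ keep introducing new group elements as $h$ ranges over the subgroup generated so far. Without invariance, the restriction of $\delta$ to $R[H]$ is not a derivation \emph{of} $R[H]$ but only a derivation from $R[H]$ into $R[G]$, so \cite[Theorem 2.1]{Ikeda_Kawamoto} in the form you invoke (inner derivations of the ring $R[H]$ itself) does not apply.

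The repair is exactly what the paper does, and it is why the proof is written out rather than quoted: one does not restrict $\delta$ at all, but directly forms the averaging element $x=\frac{1}{|H|}\sum_{a\in H}a^{-1}\delta(a)$, which lies in $R[G]$ (not necessarily in $R[H]$), and verifies via the Leibniz rule that $\partial_{-x}$ agrees with $\delta$ on $R[H]$, hence on $F$; no $\delta$-invariance of $R[H]$ is needed. Equivalently, you could cite the bimodule (separability) form of the finite-group result: when $|H|$ is invertible in $R$, every derivation of $R[H]$ with values in an arbitrary $R[H]$-bimodule --- here $R[G]$ --- is implemented by an element of that bimodule. Your observation that Cauchy's theorem forces every prime divisor of $|H|$ to lie in $\pi(G)$, so that $|H|\cdot 1_R$ is a unit, is correct and is indeed the only place the arithmetic hypothesis enters; but the invariance step is the actual crux, and as written it fails.
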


\begin{proof}   The same as the proof of \cite[Theorem 2.1]{Ikeda_Kawamoto}. We prove it here in order to have the
paper more self-contained. Assume that $\delta \in \Der_R R[G]$
and $F$ is a finite subset of $R[G]$. Then there exists a finite
subgroup $H$ of $G$ such that $F\subseteq R[H]$.  Let
\begin{equation} \label{E:4}
x_H=\textstyle\frac{1}{|H|}\sum_{a\in
H}a^{-1}\delta (a)\in R[H].
\end{equation}
If $x:=x_H$,  then for any $y\in F$ and $a\in H$ holds
\[
\begin{split}
\partial_{x}(y)&=\textstyle\frac{1}{|H|}\{ \sum_{a\in H }a^{-1}\delta (a)y-\sum_{a\in H}ya^{-1}\delta (a)\}\\
&=\textstyle\frac{1}{|H|} \{ \sum_{a\in H} a^{-1}\delta (ay)-\sum_{a\in H}a^{-1}a\delta (y)-\sum_{a\in H}ya^{-1}\delta (a)\} \\
&= \textstyle\frac{1}{|H|} \{ \sum_{b\in H}ya^{-1}\delta (b)-|H|\delta (y)-\sum_{a\in H}ya^{-1}\delta (y)\} \\
&=-\delta (y).
\end{split}
\]
Hence $\delta_{|F}={(\partial_{-x})}_{|F}$.  \end{proof}

\smallskip
\noindent
{\bf Example.} Let $S(\mathbb{Q}):=\{f: \mathbb{Q}\to \mathbb{Q}\mid f  \text{  is bijective} \}$  be the group of all permutations of the rational numbers field  $\mathbb{Q}$ and $\sup f:=\{q\in \mathbb{Q}\mid f(q)\not=q\} $ a support of the element $f\in S(\mathbb{Q})$.  The set
$FS(\mathbb{Q}):=\{f\in  S(\mathbb{Q})\mid \sup f  \leq \infty  \}$ is a subgroup of $S(\mathbb{Q})$ and   $R[FS(\mathbb{Q})]$ is a  group ring over a commutative ring $R$. Let $u=\sum_{q\in \mathbb{Q}}f(q)\in (R[FS(\mathbb{Q})])^*$, where $f(q)\in FS(\mathbb{Q})$ and  $\sup f(q)=\{q-1, q\}$.  It follows the following two statements:
\begin{itemize}
\item[$(i)$] \cite[Example]{Burkov} the rule
\[
\delta: R[FS(\mathbb{Q})]\ni x\mapsto [x,u]\in R[FS(\mathbb{Q})]
\]
determines a generalized inner derivation of $R[FS(\mathbb{Q})]$, which is not inner;
\item[$(ii)$] $\delta$ is a locally inner derivation.
\end{itemize}
In fact, if   $\delta$ is an inner  derivation, then $\delta=\partial_a$ for some $a=\sum_{g\in FS(\mathbb{Q})}\alpha_gg\in R[FS(\mathbb{Q})]$. Since this sum  is finite, there exist $\beta,\lambda \in \mathbb{Q}$  and $f\in R[FS(\mathbb{Q})]$ such that
\[
\sup f=\{\beta, \lambda\}\subseteq  \mathop{\cup}_{q\in \mathbb{Q}} \sup f(q) \quad\text{and}\quad  \sup f\cup \big(\mathop{\cup} _{g\in \supp a} \sup g\big) =\varnothing,
\]
where $\supp a$ is the support of $a\in R[G]$. It follows that  $\partial_a(f)=0$ and $\delta(f)\not=0$, a contradiction. Hence $\delta$ is not inner.

If $F$ is a finite subset of $R[FS(\mathbb{Q})]$, then $F\subseteq R[H]$ for a finite subgroup $H$ of $FS(\mathbb{Q})$.
The set  $\supp H=\mathop{\cup}_{a\in H} \supp a$ is finite, so there exists $b\in  R[FS(\mathbb{Q})]$
such that
\[
\supp b\subseteq \supp u\mathop{\cup} \supp H
\]
and $\delta_{\mid H}=(\partial_b)_{\mid H}$. Consequently, $\delta$ is locally  inner on $F$.

Notice that, if $G$ is a non-abelian group, then $gh-hg\neq 0$ for $g\in G$, so $\IDer_R R[G]$ is nonzero.
If $H$ is a subgroup of $G$, then $\mathfrak{I}_R(H)$ is a right ideal of $R[G]$ generated by the set $\{ 1-h\mid h\in H\}$.
If $H$ is normal in $G$, then $\mathfrak{I}_R(H)$ is an ideal of $R[G]$ and there exists some  ring isomorphism
\[
R[G]/\mathfrak{I}_R(H)\cong R[G/H]
\]
 \cite[Proposition 1, p.~17]{Bovdi_book}. Therefore, we have a Lie ring isomorphism
\[
\Der (R[G]/\mathfrak{I}_R(H))\cong \Der R[G/H].
\]
Let $\delta \in \Der R$. An ideal $I$ of $R$ is called {\it a $\delta$-ideal} of $R$  if $\delta (a)\in I$ for all $a\in I$.
If $\delta(a)\in I$ for any  $\delta \in \Der R$, then $I$ is called a $(\Der R)$-{\it ideal}.

\begin{lemma} \label{L:13}
Let $R$ be a ring and let $G$ be a group. If  $H$ is a subgroup of $G$ such that each prime  $p\in \pi (H)$
is invertible in $R$, then the following conditions hold:
\begin{itemize}
\item[$(i)$] if $\delta \in \Der R[G]$ and $H$ is finite, then $\delta (H)=\partial_x(H)$ for some $x\in R[G]$;

\item[$(ii)$] if $H$ is a normal torsion subgroup of $G$, then $\mathfrak{I}_R(H)$ is a $\delta$-ideal for any $\delta \in \Der_RR[G]$.
\end{itemize}
\end{lemma}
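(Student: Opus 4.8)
The plan is to derive both parts from the averaging construction used in the proof of Lemma~\ref{L:11}, the only additional ingredient being the structure of $\mathfrak{I}_R(H)$ when $H$ is normal. For $(i)$ I would first note that, since $H$ is finite, $\pi(H)$ is exactly the set of primes dividing $|H|$; by hypothesis each of them is invertible in $R$, and as $1_R$ is central the element $|H|\cdot 1_R$ is then a central unit of $R$, so $\tfrac{1}{|H|}\in Z(R)\subseteq Z(R[G])$ makes sense. Put
\[
x:=-\tfrac{1}{|H|}\sum_{a\in H}a^{-1}\delta(a)\in R[G].
\]
For a fixed $h\in H$ the Leibniz rule gives $a^{-1}\delta(a)h=a^{-1}\delta(ah)-\delta(h)$; summing over $a\in H$ and substituting $b=ah$ (so $a^{-1}=hb^{-1}$ and $b$ again runs over $H$) turns $\sum_{a\in H}a^{-1}\delta(ah)$ into $h\sum_{b\in H}b^{-1}\delta(b)$, and reassembling yields $xh-hx=\delta(h)$, i.e.\ $\delta(h)=\partial_x(h)$ for every $h\in H$; in particular $\delta(H)=\partial_x(H)$. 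This uses nothing beyond the Leibniz rule, so it is in fact valid for an arbitrary $\delta\in\Der R[G]$.

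For $(ii)$, fix $\delta\in\Der_RR[G]$ and $h\in H$. Since $H$ is a torsion group, $\langle h\rangle$ is a finite cyclic subgroup with $\pi(\langle h\rangle)\subseteq\pi(H)$, so part $(i)$ applied to $\langle h\rangle$ provides $x\in R[G]$ with $\delta(h)=xh-hx$. Writing $xh-hx=(1-h)x-x(1-h)$ and using that $\mathfrak{I}_R(H)$ is a two-sided ideal of $R[G]$ (because $H$ is normal) containing $1-h$, we get that both summands, hence $\delta(h)$, lie in $\mathfrak{I}_R(H)$. (Alternatively this is immediate from the idempotent $e=\tfrac{1}{|h|}\sum_{i=0}^{|h|-1}h^i$: one has $1-e=\tfrac{1}{|h|}\sum_{i=0}^{|h|-1}(1-h^i)\in\mathfrak{I}_R(H)$ and $he=e$, whence $\delta(h)e=(1-h)\delta(e)\in\mathfrak{I}_R(H)$ and $\delta(h)=\delta(h)e+\delta(h)(1-e)\in\mathfrak{I}_R(H)$.) Now any element of the right ideal $\mathfrak{I}_R(H)$ is of the form $u=\sum_i(1-h_i)\alpha_i$ with $h_i\in H$ and $\alpha_i\in R[G]$, and the Leibniz rule gives $\delta(u)=\sum_i\big(-\delta(h_i)\alpha_i+(1-h_i)\delta(\alpha_i)\big)$; here each $(1-h_i)\delta(\alpha_i)$ lies in $\mathfrak{I}_R(H)$ because it is a right ideal, and each $\delta(h_i)\alpha_i$ lies there by the previous step together with two-sidedness. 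Hence $\delta(\mathfrak{I}_R(H))\subseteq\mathfrak{I}_R(H)$, i.e.\ $\mathfrak{I}_R(H)$ is a $\delta$-ideal.

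I do not expect a genuine obstacle: both computations are routine. The two points that deserve care are the passage from ``every prime in $\pi(H)$ is invertible in $R$'' to the (central) invertibility of $|H|$, respectively of each order $|h|$ with $h\in H$, which is what makes the averaging element well defined, and the systematic use of the normality of $H$, without which $\mathfrak{I}_R(H)$ would only be a right ideal and the products $x(1-h)$ and $\delta(h_i)\alpha_i$ need not stay inside it. The heart of the matter is the single claim $\delta(h)\in\mathfrak{I}_R(H)$ for $h\in H$; everything else follows formally from the Leibniz rule.
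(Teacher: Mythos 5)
Your proof is correct, and part $(i)$ is exactly the paper's argument: both use the averaging element $x_H=\frac{1}{|H|}\sum_{a\in H}a^{-1}\delta(a)$ from Eq.~(\ref{E:4}) in the proof of Lemma~\ref{L:11}, and your observations that invertibility of the primes in $\pi(H)$ yields a central inverse of $|H|$, and that the telescoping computation uses only the Leibniz rule on elements of $H$, are precisely what makes the paper's appeal to Lemma~\ref{L:11} legitimate for an arbitrary $\delta\in\Der R[G]$ rather than only for $R$-derivations. In part $(ii)$ you and the paper both reduce to the single claim $\delta(h)\in\mathfrak{I}_R(H)$ for $h\in H$ and then propagate it over the whole ideal by Leibniz, but the mechanics differ: the paper expands $\delta(h)=\partial_x(h)=\sum_{t}\alpha_t(th-ht)$ coefficient by coefficient and verifies that each $g(th-ht)$ lies in $\mathfrak{I}_R(H)$ because $h^{-1}t^{-1}ht\in H$ by normality, whereas you use the identity $xh-hx=(1-h)x-x(1-h)$, which invokes normality only through the two-sidedness of $\mathfrak{I}_R(H)$ and avoids the expansion entirely (your idempotent variant with $e=\frac{1}{|h|}\sum_i h^i$ is a third route to the same point). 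Your version is slightly cleaner and, as you note, actually establishes $(ii)$ for every $\delta\in\Der R[G]$: writing elements of $\mathfrak{I}_R(H)$ as $\sum_i(1-h_i)\alpha_i$ with $\alpha_i\in R[G]$ makes the hypothesis $\delta(R)=0$ unnecessary, while the paper's step $\delta(rg(h-1))=r\delta(g)(h-1)+rg\delta(h)$ silently uses it (harmlessly, since a term $\delta(r)g(h-1)$ would land in $\mathfrak{I}_R(H)$ anyway). No gaps.
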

\begin{proof} $(i)$ In fact, from proof of Lemma \ref{L:11} it holds that $x:=x_H$ is of the form $(\ref{E:4})$.

$(ii)$ If  $h\in t(H)$, then $\delta (h)=\partial_x (h)$ for some $x=\sum_{t\in G}\alpha_tt\in Z(R)[G]$ and
\[
\delta (rg(h-1))=r\delta (g)(h-1)+rg\delta (h)\qquad (r\in R, g\in G, h\in H).
\]
Moreover, \quad $rg\delta  (h)=\sum_{t\in G}r\alpha_tg(th-ht)=\sum_{t\in G}r\alpha_tgt(1-h^{-1}t^{-1}ht)\in \mathfrak{I}_R(H)$. \quad It  follows that    $\delta (rg(h-1))\in \mathfrak{I}_R(H)$ and $\mathfrak{I}_R(H)$ is a $\delta$-ideal.
\end{proof}

If $R$ is a differentially simple ring (i.e., $R^2\neq 0$ and $R$ has no proper $(\Der R)$-ideals) with a minimal two-sided ideal, then (see \cite[Main Theorem]{Block}) the ring  $R$ is simple  or there exist   $n\in\mathbb{N}$ and a simple ring $S$ of characteristic $p>0$ such that $R=S[G]$, where $G$ is a  direct sum of $n$ copies of a cyclic group of order $p$. Therefore,   Lemma \ref{L:13}$(ii)$ is not true in the modular case.

\begin{corollary} \label{C:14}
Let $R$ be a  ring. If  $G$ is a finite group such that each prime $p\in \pi (G)$ is  invertible in $R$, then  every $R$-derivation of $R[G]$  is inner.
\end{corollary}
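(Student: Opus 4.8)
The plan is to combine Lemma~\ref{L:11} with a finiteness argument that upgrades "locally inner" to "inner" when $G$ is finite. Let $\delta\in\Der_RR[G]$. Since $G$ is finite, it is certainly locally finite and $\pi(G)$ consists of primes invertible in $R$, so $\delta$ is locally inner by Lemma~\ref{L:11}. The point is that the locally inner witness can be taken uniformly: apply the construction in the proof of Lemma~\ref{L:11} with $H=G$ itself. Then $|G|$ is invertible in $R$, the element
\[
x_G=\textstyle\frac{1}{|G|}\sum_{a\in G}a^{-1}\delta(a)\in R[G]
\]
is well defined, and the computation carried out there shows $\partial_{-x_G}(y)=\delta(y)$ for every $y\in R[G]$ (in the proof of Lemma~\ref{L:11} the identity $\partial_x(y)=-\delta(y)$ is verified for all $y$ in the finite subgroup algebra, which is now all of $R[G]$).

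So the proof is essentially a one-line specialization: First I would invoke Lemma~\ref{L:11} (or directly Lemma~\ref{L:13}$(i)$ with $H=G$) to produce $x\in R[G]$ with $\delta(g)=\partial_x(g)$ for all $g\in G$. Then, since $G$ spans $R[G]$ over $R$ and both $\delta$ and $\partial_x$ are $R$-linear (they kill $R$ and satisfy the Leibniz rule), the equality $\delta=\partial_x$ extends from $G$ to all of $R[G]$ by additivity and $R$-linearity. Hence $\delta$ is inner.

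There is no real obstacle here; the only thing to be careful about is that the element $x_G$ genuinely lies in $R[G]$ — this is exactly where the hypothesis that every prime dividing $|G|$ is invertible in $R$ is used, guaranteeing $|G|^{-1}\in R$ (or at least that the averaging makes sense in $R[G]$). One should also note that an $R$-derivation is determined by its values on $G$, so checking $\delta$ and $\partial_x$ agree on the group basis suffices; this is immediate from $R$-bilinearity of multiplication and the fact that $\delta(rg)=r\,\delta(g)$ and $\partial_x(rg)=r\,\partial_x(g)$ for $r\in R$, $g\in G$. I would keep the write-up to two or three sentences, citing the proof of Lemma~\ref{L:11} for the averaging identity rather than repeating the computation.
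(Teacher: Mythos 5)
Your argument is correct and coincides with the paper's own proof: both take $H=G$ in the averaging construction of Lemma~\ref{L:11} (Eq.~(\ref{E:4})) to get a single element $x$ with $\delta=\partial_{-x}$ on $G$, and then extend to all of $R[G]$ by $R$-linearity. Your write-up is just slightly more explicit about why agreement on the group basis suffices, which the paper leaves implicit.
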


\begin{proof}
Let $H$ be  a finite subgroup of $G$,  $b\in H$ and $\delta \in \Der_RR[G]$. It is easy to see that  $\delta (b)=-\partial_x(b)$ by Lemma \ref{L:11}, where $x$ is the same as in Eq. $(\ref{E:4})$.
\end{proof}

This corollary earlier was proved for a finite group $G$ and the following rings $R$:
\begin{itemize}
\item[$\bullet$] $R$ is a semiprime ring with some restrictions  (see \cite[Theorem 1.1]{Ferrero_Giambruno_Milies});
 \item[$\bullet$]  $R$ is a field of characteristic zero (see \cite[Corollary
2.2]{Ikeda_Kawamoto});

 \item[$\bullet$]  $R$ is the integer numbers ring (see \cite[Theorem 1]{Spiegel});

\item[$\bullet$]  $R$ is  commutative with some restrictions  (see  \cite[Corollary]{Burkov},   \cite[p.~490]{Curtis_Reiner}
and \cite[p.~76]{DeMeyer_Ingraham}).
\end{itemize}

\smallskip
\noindent
{\bf Example.} Let $R$ be a commutative ring,
$G$ a torsion  abelian group and  $\delta \in \Der R[G]$.

$(a)$ If $R$ is $n$-torsion-free and $\exp(G)=n\in\mathbb{N}$
(respectively $R^+$ is torsion-free),
then $\Der_RR[G]=0$ in view of Proposition $\ref{P:10}(iii)$.

$(b)$ If $R$ is a   domain of characteristic $0$ which fraction
field $Q(R)$ is an algebraic extension of the rational numbers field
$\mathbb Q$, then $\Der_R  R[G]=0$.

Indeed, $Q(R)[G]$ is an algebraic extension over $Q(R)$ (see e.g.
\cite[Theorem 28.1]{Pas71}) and  we conclude that for every element $g\in G$
there exists its minimal polynomial
\[
m_g=X^n+a_1X^{n-1}+\cdots +a_{n-1}X+a_n\in Q(R)[X].
\]
 Then
$bm_g\in R[X]$ for some $0\neq b\in R$. If $\delta \in
\Der_RR[G]$, then
\[
0=\delta (0)=\delta
(bm_g(g))=b(ng^{n-1}+(n-1)a_1g^{n-2}+\cdots +a_{n-1})\delta (g)
\]
what forces that $\delta (g)=0$.

$(c)$ If $R$ is a domain of characteristic $p>0$, $R=\{x^p\mid
x\in R\}$ and $p\notin \pi (G)$, then $\Der R[G]=0$.

Clearly,  for every $r\in R$ there exists $x\in R$ such that
$r=x^p$ and then  $\delta (r)=px^{p-1}\delta (x)=0$. Moreover,   $\delta (G)=0$ by Eqs. $(\ref{E:3})$  and the assertion follows.

\begin{proof}[Proof of Theorem $\ref{T:1}$]
Let $G=D\cdot F$ be  a Chernikov group, where $D$ is a
normal,  countable,  divisible abelian group and $F$ is a finite group. Obviously,   $D$   has an ascending series $\{
D_n\}_{ n \in {\mathbb N}}$ of finite $G$-normal subgroups,  such that $D=\bigcup_{n=1}^\infty
D_n$.   Let  $\delta \in \Der_RR[G]$. There exist elements $x_1:=x_{D_1}$ and
$x_n:=x_{D_n}$ of the form $(\ref{E:4})$ (see  Lemma \ref{L:11}) such that
\[
\delta
(D_1)={\partial_{-x_1}}(D_1)  \qquad  \text{and}\qquad  \delta(D_n)={\partial_{-x_n}}(D_n).
\]
Evidently, ${\partial_{-x_s}}(D_s)=\delta (D_s)={\partial_{-x_n}}(D_s)$ for all   $s\leq n\in \mathbb{Z}$, so
$x_s-x_n\in C_{R[G]}(D_s)$, in which  $C_{R[G]}(D_s)$  is the centralizer of $D_s$ in $R[G]$.
Then we have the following descending chain
\[
C_{R[G]}(D_1)\geq \cdots \geq C_{R[G]}(D_s)\geq \cdots .
\]
Since $R[D]$ is a subgroup of finite index in the additive group of $R[G]$ and $R[D]\subseteq C_{R[G]}(D_s)$,   there exists  $m\in\mathbb{N}$ such that
\[
 C_{R[G]}(D_m)=C_{R[G]}(D_{m+1})=\cdots .
 \]
It follows that  $x_q-x_m\in C_{R[G]}(D_m)$ for all  $q\geq
m$. Hence $\delta_{|D} =(\partial_{x_m})_{|D}$ is inner. Moreover,
$D_mF$ is finite and $\delta_{|D_mF} =(\partial_{a})_{|D_mF}$ for
some $a\in R[G]$. However  $x_m-a\in C_{R[G]}(D_m)$ and so
$\partial_{x_m}=\partial_{a}$ on $D$. Consequently,  $\delta
=\partial_{a}$. \end{proof}

\section{Inner $R$-derivations of $R[G]$}

If $R$ is  commutative, then $\IDer R[G]=\IDer_RR[G]$. Additionally, if  $G$ is an abelian group, then $\IDer_RR[G]=0$. Clearly, the  center $Z(B)$ of $B$ is an ideal of the Lie ring $B^L$.

\begin{lemma} \label{GGC4}
Let $R$ be a ring and let $G$ be a group. There exists a Lie ring isomorphism
\[(Z(R)[G])^L/Z(Z(R)[G])\cong \IDer_RR[G].\]
\end{lemma}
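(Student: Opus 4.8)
The plan is to write down the obvious candidate map and check it is a well-defined Lie ring isomorphism. Consider the map
\[
\Phi : (Z(R)[G])^L \longrightarrow \IDer_R R[G], \qquad a \longmapsto \partial_a,
\]
sending a central element $a\in Z(R)[G]$ to the inner derivation $\partial_a : x\mapsto ax-xa$. First I would verify that $\Phi$ indeed lands in $\Der_R R[G]$: since $a\in Z(R)[G]$ commutes with every element of $R$ (because the coefficients lie in $Z(R)$), we get $\partial_a(r)=ar-ra=0$ for all $r\in R$, so $\partial_a$ is an $R$-derivation; and it is inner by definition, so $\partial_a\in\IDer_R R[G]$. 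Conversely, by Proposition \ref{P:11}$(i)$ every $R$-derivation sends $g\in G$ into $Z(R)[G]$, but more to the point every \emph{inner} $R$-derivation $\partial_b$ with $b=\sum_g\beta_g g\in R[G]$ must satisfy $\partial_b(r)=0$ for all $r\in R$, which forces each coefficient $\beta_g$ to be central in $R$; hence $b\in Z(R)[G]$ and $\Phi$ is surjective.

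Next I would check that $\Phi$ is a Lie ring homomorphism. Additivity $\partial_{a+b}=\partial_a+\partial_b$ is immediate from the definition. For the bracket, the standard identity
\[
[\partial_a,\partial_b] = \partial_a\partial_b - \partial_b\partial_a = \partial_{ab-ba} = \partial_{[a,b]}
\]
holds for inner derivations of any associative ring (this is the computation behind $\IDer B$ being an ideal of $\Der B$, already invoked in the introduction), so $\Phi([a,b])=[\Phi(a),\Phi(b)]$. Then I would identify the kernel: $\partial_a=0$ means $ax=xa$ for all $x\in R[G]$, i.e. $a\in Z(R[G])$; intersecting with the domain, $\ker\Phi = Z(R)[G]\cap Z(R[G]) = Z(Z(R)[G])$. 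The last equality needs a small argument: an element of $Z(R)[G]$ is central in $R[G]$ if and only if it is central in $Z(R)[G]$, since commuting with all of $R[G]$ is equivalent to commuting with $G$ together with all of $R$, and the elements of $Z(R)[G]$ automatically commute with $R$; so the condition reduces to commuting with $G$, equivalently with $Z(R)[G]$. By the first isomorphism theorem for Lie rings, $\Phi$ induces the claimed isomorphism $(Z(R)[G])^L/Z(Z(R)[G])\cong\IDer_R R[G]$.

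The only mildly delicate point, and the step I would treat most carefully, is the description of the kernel, i.e. the identity $Z(R)[G]\cap Z(R[G]) = Z(Z(R)[G])$. The inclusion $\subseteq$ is the one requiring justification: if $a\in Z(R)[G]$ is central in the larger ring $R[G]$, it is a fortiori central in the subring $Z(R)[G]$. The reverse inclusion $\supseteq$ uses that any $a\in Z(R)[G]$ commutes with every $r\in R$ by centrality of its coefficients, and commutes with every $g\in G$ by assumption, hence with all of $R[G]$; combined with $a\in Z(R)[G]$ this gives membership in the intersection. Everything else is routine bookkeeping with the definitions of $\partial_a$ and the Lie operations, so the proof is short once these identifications are in place.
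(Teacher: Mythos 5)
Your proof is correct and follows essentially the same route as the paper's: the map $a\mapsto\partial_a$, the bracket identity $\partial_{[a,b]}=[\partial_a,\partial_b]$, and the computation of the kernel. You additionally spell out two points the paper leaves implicit---surjectivity onto $\IDer_RR[G]$ and the identification $Z(R)[G]\cap Z(R[G])=Z(Z(R)[G])$---and both are handled correctly (though you label the trivial inclusion, rather than the reverse one, as the step ``requiring justification'').
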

\begin{proof} The rule $\varphi :(Z(R)[G])^L \ni \alpha \mapsto \partial_\alpha \in \IDer_RR[G]$
satisfies
\[
\varphi ([\alpha ,\beta ])=\partial_{[\alpha ,\beta
]}=[\partial_\alpha ,\partial_\beta ]= [\varphi (\alpha ),\varphi
(\beta )]
\]
i.e.,  $\varphi$ is a Lie homomorphism. Finally,
$\varphi (\alpha ) =0$ iff $ \partial_\alpha
=0$ iff  $ \alpha \in Z(Z(R)[G])$.
\end{proof}

\begin{corollary} Let $R$ be a ring and let $G$ be a group. The ring  $\IDer_RR[G]$ is an abelian Lie ring if and only if the derived subgroup $G'$ is central.
\end{corollary}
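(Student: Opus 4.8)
The plan is to push everything through Lemma~\ref{GGC4} and reduce to a statement about the group $G$ alone. By that lemma there is a Lie ring isomorphism $\IDer_R R[G]\cong (Z(R)[G])^L/Z(Z(R)[G])$, and for any Lie ring $L$ with an ideal $N$ the quotient $L/N$ is abelian if and only if $[L,L]\subseteq N$. Applying this with $L=(Z(R)[G])^L$ and $N=Z(Z(R)[G])$ (the center of the associated Lie ring, hence an ideal of $L$), we obtain that $\IDer_R R[G]$ is an abelian Lie ring if and only if $[Z(R)[G],Z(R)[G]]\subseteq Z(Z(R)[G])$.

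Next I would make this inclusion concrete. Since the Lie bracket is $\mathbb{Z}$-bilinear and scalars from $R$ commute with the elements of $G$ inside $R[G]$, for $\alpha=\sum_g r_g g$, $\beta=\sum_h s_h h$ in $Z(R)[G]$ one has $[\alpha,\beta]=\sum_{g,h} r_g s_h (gh-hg)$; hence $[Z(R)[G],Z(R)[G]]=\sum_{g,h\in G} Z(R)(gh-hg)$. Also, an element of $R[G]$ commutes with all of $R$ exactly when each of its coefficients lies in $Z(R)$, so that $C_{R[G]}(R)=Z(R)[G]$ and therefore $Z(Z(R)[G])=Z(R)[G]\cap Z(R[G])$. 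Since each $gh-hg$ already lies in $Z(R)[G]$, the inclusion above is equivalent to the purely structural condition
\[
gh-hg\in Z(R[G])\qquad\text{for all }g,h\in G.
\]
So it remains to prove that this condition holds if and only if $G'\le Z(G)$.

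For the implication $\Leftarrow$, assume $G'\le Z(G)$, fix $g,h\in G$ and put $c=[g,h]=ghg^{-1}h^{-1}\in G'\subseteq Z(G)$, so $gh=c\,hg$ and $gh-hg=(c-1)hg$. Its coefficients lie in $\{0,\pm 1\}\subseteq Z(R)$, so $gh-hg$ commutes with $R$; for a group element $k$, writing similarly $k(hg)=e\,(hg)k$ with $e=[k,hg]\in G'\subseteq Z(G)$ and using that the central group elements $c,e$ are central in $R[G]$, the difference $k(gh-hg)-(gh-hg)k$ collapses to $(c-1)(e-1)\cdot(hg)k$; as $(hg)k$ is a unit of $R[G]$, it suffices to verify $(c-1)(e-1)=0$, and this is the place where the hypothesis $G'\le Z(G)$ must be exploited. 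Consequently $gh-hg\in Z(R[G])$ for all $g,h$, and by the reduction above $\IDer_R R[G]$ is abelian. For the implication $\Rightarrow$, assume every $gh-hg$ is central. Conjugating by an arbitrary $k\in G$ yields $k(gh)k^{-1}-k(hg)k^{-1}=gh-hg$, an equality of $\mathbb{Z}$-linear combinations of group elements; since a central element of $R[G]$ has conjugation–invariant coefficients, comparing the (at most two–element) supports forces the unordered pair $\{gh,hg\}$ to be stable under $G$-conjugation, and specialising $h$ appropriately one extracts that conjugation fixes $G$ modulo $G'$, i.e. $G'\le Z(G)$.

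The passage through Lemma~\ref{GGC4} and the bilinearity bookkeeping are routine. The one genuinely non-formal point is the direction $\Leftarrow$: converting ``$G'$ is central'' into centrality of the bare group-ring commutators $gh-hg$, which amounts to controlling the second–order term $(c-1)(e-1)$. I expect that step to carry the weight of the argument and would write it out in detail, whereas the direction $\Rightarrow$ is a straightforward support comparison.
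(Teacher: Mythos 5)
Your reduction is exactly the paper's route: via Lemma~\ref{GGC4}, the Lie ring $\IDer_RR[G]$ is abelian iff $\partial_{[\alpha,\beta]}=0$ for all $\alpha,\beta\in Z(R)[G]$, iff $gh-hg\in Z(R[G])$ for all $g,h\in G$, and your support-comparison sketch of the ``only if'' direction is in the same spirit as the paper's single-sentence argument (which, note, never addresses the converse at all). The genuine gap is precisely the step you defer: the verification that $(c-1)(e-1)=0$ for $c,e\in G'\subseteq Z(G)$. That identity is false in general, and the implication ``$G'\le Z(G)\Rightarrow gh-hg\in Z(R[G])$'' fails with it. Take $R=\mathbb{Q}$ and $G=Q_8=\langle i,j\rangle$ with central involution $z$, so $G'=\{1,z\}=Z(G)$. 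Then $ij=k$ and $ji=zk$ are \emph{distinct basis elements} of $R[G]$, and
\[
i(ij-ji)i^{-1}=(1-z)\,iki^{-1}=(1-z)zk=-(1-z)k=-(ij-ji),
\]
so $ij-ji$ is central only if $2(1-z)k=0$, which fails over $\mathbb{Q}$ since $1$ and $z$ are linearly independent. In your notation $c=e=z$ and $(c-1)(e-1)=(z-1)^2=2(1-z)\neq 0$. Hence $\IDer_{\mathbb{Q}}\mathbb{Q}[Q_8]$ is not abelian although $G'$ is central: the ``if'' direction of the corollary is false, so the step carrying ``the weight of the argument'' cannot be completed.

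The fault lies with the statement rather than with your strategy: the intermediate condition you correctly isolated, namely $gh-hg\in Z(R[G])$ for all $g,h\in G$, is the right characterization, but it is strictly stronger than $G'\le Z(G)$ except in the modular situation. Indeed the failure is already visible from Proposition~\ref{P:21}$(ii)(a)$ of the same paper: an abelian Lie ring is nilpotent, so for a division ring $R$ of characteristic $0$ and a non-abelian $G$ that proposition forbids $\IDer_RR[G]$ from being nilpotent, hence from being abelian, no matter how small $G'$ is. Closing your argument would require, on top of $G'\le Z(G)$, a torsion hypothesis on $R$ killing the elements $2(1-c)$ (and more generally $(c-1)(e-1)$ for $c,e\in G'$), i.e.\ the kind of $p$-abelian/characteristic-$p$ condition appearing in \cite{PPS73}. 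Your honest flagging of $(c-1)(e-1)=0$ as the non-formal point is exactly where that missing hypothesis would have to enter.
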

\begin{proof} Since $[\partial_x,\partial_y]=\partial_{[x,y]}$ for any $x,y\in R[G]$, we conclude that $[\partial_x,\partial_y]=0$ iff $[x,y]\in Z(R[G])$ what implies that $G'\subseteq Z(G)$.
\end{proof}

Since no each derivation of $R[G]$ is inner in the case of a
locally finite group $G$  (see  \cite[Example]{Burkov}), we
obtain the next.
\begin{lemma} \label{L:18} Let $R$ be a ring,  let $G$ be a locally finite group
such that each $p\in \pi (G)$ is invertible in $R$ and $\delta \in
\Der_RR[G]$. If the set   $\delta (G)$  is finite, then $\delta \in
\IDer_RR[G]$.
\end{lemma}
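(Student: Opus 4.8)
The plan is to reduce the problem to a single finite subgroup on which $\delta$ is already given by an inner derivation, and then to show that this same element conjugates correctly on all of $R[G]$. First I would use the hypothesis that $\delta(G)$ is a finite subset of $R[G]$: each element $\delta(g)$ has finite support, so the union $\bigcup_{g\in G}\supp\delta(g)$ is a finite subset of $G$, and together with any finite set of $g$'s whose images are pairwise distinct this is contained in a finitely generated, hence (by local finiteness) finite, subgroup $H_0$ of $G$. More carefully, since $\delta$ is $R$-linear and $\delta(G)$ is finite, only finitely many group elements $g$ have $\delta(g)\neq 0$ up to the relation imposed by linearity; I would choose a finite subgroup $H$ of $G$ large enough that $\delta(g)\in R[H]$ for all $g\in G$ and $\delta(H)$ already determines $\delta$ in the sense described below.

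Next, by Lemma \ref{L:13}$(i)$ (applicable because every $p\in\pi(H)$ is invertible in $R$), there exists $x=x_H\in R[G]$ of the form \eqref{E:4} with $\delta(h)=\partial_{-x}(h)$ for all $h\in H$; concretely $x_H=\frac{1}{|H|}\sum_{a\in H}a^{-1}\delta(a)$, and since each $\delta(a)\in R[H]$ we may even take $x\in R[H]$. Consider now $\mu:=\delta+\partial_x\in\Der_RR[G]$. Then $\mu(h)=0$ for every $h\in H$. The key point I would establish is that $\mu(g)=0$ for \emph{every} $g\in G$, whence $\mu=0$ and $\delta=\partial_{-x}\in\IDer_RR[G]$ as required. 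To see this, fix an arbitrary $g\in G$ and let $K$ be a finite subgroup of $G$ containing both $H$ and $g$ (local finiteness). By Lemma \ref{L:11} (or Lemma \ref{L:13}$(i)$) applied to $K$, there is $y=x_K=\frac{1}{|K|}\sum_{a\in K}a^{-1}\mu(a)$ with $\mu(k)=\partial_{-y}(k)$ for all $k\in K$; but $\mu(h)=0$ for $h\in H$ forces, by the usual averaging argument, that $y$ lies in the centralizer $C_{R[K]}(H)$, and in fact one computes directly that $\mu_{|K}=(\partial_{-y})_{|K}$ with $y=x_K$. I would then argue that since $\delta(G)\subseteq R[H]$, the element $y$ involves only contributions from $\mu(a)=\delta(a)+\partial_x(a)$, all of which lie in $R[K]$, and the restriction of $\mu$ to the larger $K$ is consistent with its restriction to $H$; comparing the two inner derivations $\partial_{-y}$ and $0$ on $H$ shows $y\in Z(R[H])$ acts trivially, and then $\mu(g)=\partial_{-y}(g)$ must actually vanish by re-examining the support condition $\delta(g)\in R[H]$.

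The main obstacle, and the step I would spend the most care on, is the last one: passing from "$\mu$ vanishes on $H$ and $\delta(G)\subseteq R[H]$" to "$\mu$ vanishes on all of $G$." The subtlety is that $x=x_H$ only conjugates correctly on $H$, and a priori $\partial_x(g)$ for $g\notin H$ need not match $-\delta(g)$. The finiteness of $\delta(G)$ is exactly what rescues this: it means $\delta$ is "supported" on the finite set $\delta(G)\subseteq R[H]$, so the consistency of the locally inner structure (the fact, from \cite{Ikeda}, that $\LDer R[G]$ is an ideal, together with Lemma \ref{L:11} giving $\delta\in\LDer_RR[G]$) can be upgraded: the family of local conjugators $\{x_K\}$ over finite $K\supseteq H$ stabilizes because the "new information" $\mu(k)$ for $k\in K\setminus H$ is controlled by the finite set $\delta(G)$ and the fixed element $x$. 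I would phrase this as: for every finite $K\supseteq H$ one has $x_K-x_H\in C_{R[G]}(H)$ and, since $\delta(G)$ is finite, the descending chain of centralizers $C_{R[G]}(K)$ as $K$ grows must stabilize (mimicking the argument in the proof of Theorem \ref{T:1}), yielding a single $x$ with $\delta_{|G}=(\partial_{-x})_{|G}$, hence $\delta=\partial_{-x}$.
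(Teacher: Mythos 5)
Your overall strategy --- find a finite subgroup $H$ on which $\delta$ agrees with an inner derivation $\partial_{-x_H}$ via Lemma \ref{L:13}$(i)$, then argue that this one element works on all of $G$ --- is essentially the paper's (the paper takes $H=\langle g_1,\ldots,g_n\rangle$ generated by representatives of the finitely many values of $\delta$ on $G$; you take $H$ containing the union of the supports, which is also fine). The first half of your argument is sound. The genuine gap is the globalization step, and the mechanism you propose for it fails. The stabilization of the chain $C_{R[G]}(D_1)\supseteq C_{R[G]}(D_2)\supseteq\cdots$ in the proof of Theorem \ref{T:1} comes from the fact that all of these centralizers contain $R[D]$, which has \emph{finite index} in the additive group $R[G]^{+}$ precisely because $R$ is finite and $[G:D]<\infty$. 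In Lemma \ref{L:18} the ring $R$ is arbitrary and $G$ is an arbitrary (not even countable) locally finite group, so there is no finite-index anchor, the finite subgroups $K\supseteq H$ do not form a chain, and descending chains of centralizers in $R[G]$ have no reason to terminate. Knowing only that $x_K-x_H\in C_{R[G]}(H)$ gives no control over $[g,x_K-x_H]$ for $g\in K\setminus H$, which is exactly what you need to vanish. A second defect: after passing to $\mu=\delta+\partial_x$ the hypothesis ``$\mu(G)$ is finite'' is lost, since $\partial_x(G)$ need not be finite; so the finiteness assumption cannot simply be reapplied to $\mu$, and your appeals to ``re-examining the support condition'' and to ``$y$ acting trivially'' do not close the argument.

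The gap is fillable, but by exploiting the Leibniz rule on $G$ directly rather than by a limiting argument. Put $W=\bigcup_{v\in\delta(G)}\supp v$, a finite subset of $G$. For all $g,g'\in G$ one has $\delta(g)g'=\delta(gg')-g\,\delta(g')$, whose support lies in $W\cup gW$; since right translation by $g'$ permutes supports without cancellation, if $\delta(g)\neq 0$ and $s\in\supp\delta(g)$ then $sg'\in W\cup gW$ for \emph{every} $g'\in G$, forcing $G\subseteq s^{-1}(W\cup gW)$ to be finite. Hence either $\delta$ vanishes on $G$ (and then $\delta=0=\partial_0$, since $\delta(R)=0$) or $G$ itself is finite, in which case Lemma \ref{L:13}$(i)$ (equivalently Corollary \ref{C:14}) with $H=G$ finishes the proof. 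For what it is worth, the paper's own write-up also stops after establishing $\delta_{|H}=(\partial_x)_{|H}$ on its chosen finite $H$ and leaves the passage to all of $G$ implicit; but as submitted your proposal does not prove the lemma.
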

\begin{proof}
The set $Q:=\{ g\in G\mid \delta (g)=0\}$ is a subgroup of $G$
because
\[
0=\delta (1)=\delta (hh^{-1})=\delta (h)h^{-1}+h\delta (h^{-1})=h\delta (h^{-1}),
\]
so $\delta (h^{-1})=0$  and
$\delta (gh^{-1})=\delta (g)h^{-1}+g\delta (h^{-1})=0$ for any
$g,h\in Q$.
Clearly,   $\delta (G)=\{ \delta (g_i)\mid i=1,\ldots, n\}$
and the subgroup $H=\langle g_1,\ldots ,g_n\rangle \subseteq G$ is finite (where $n=|\delta (G)|$), so we
conclude  that
$\delta_{|H}={(\partial_x)}_{|H}$
 for some $x\in Z(R)[G]$ by Lemma \ref{L:13}$(i)$. \end{proof}

\begin{lemma}  \label{L:19}
Let $R$ be a ring and  let $G$ be a  group
such that each prime $p\in \pi (G)$ is invertible in $R$. If   $\delta
\in \Der R[G]$ and $g\in \tau G$, then the following conditions hold:
\begin{itemize}
\item[$(i)$] \cite[Lemma 1]{Burkov} if $t\in G$ and $\delta
(g)=\sum_{h\in G}\alpha_{g,h}h\in R[G]$,  then $gt=tg$ implies
that $\alpha_{g,t}=0$;
\item[$(ii)$] if $G$ is torsion, then
$\delta (Z(G))=0$ for any $\delta \in \Der R[G]$.
\end{itemize}
\end{lemma}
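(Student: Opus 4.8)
The plan is to take part $(i)$ as quoted from \cite[Lemma 1]{Burkov} and to obtain part $(ii)$ as a short consequence of it. For orientation, $(i)$ holds in the present setting because, $|g|$ being a product of primes from $\pi(G)$ and hence invertible in $R$, the finite cyclic subgroup $H=\langle g\rangle$ satisfies the hypothesis of Lemma \ref{L:13}$(i)$; thus $\delta(g)=\partial_x(g)=xg-gx$ for some $x\in R[G]$, and comparing the coefficient of $t$ on both sides (with $-x=\sum_{s\in G}\beta_s s$) gives $\alpha_{g,t}=\beta_{g^{-1}t}-\beta_{tg^{-1}}$, which vanishes whenever $gt=tg$, since then $g^{-1}t=tg^{-1}$.

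For $(ii)$ I would reduce immediately to $(i)$. Fix $\delta\in\Der R[G]$ and an arbitrary $z\in Z(G)$. Since $G$ is torsion, $z\in\tau G$, so $(i)$ applies with $g:=z$; and since $z$ is central, $zt=tz$ for every $t\in G$. Writing $\delta(z)=\sum_{h\in G}\alpha_{z,h}h$, part $(i)$ then forces $\alpha_{z,t}=0$ for every $t\in G$, i.e.\ $\delta(z)=0$. As $z$ was an arbitrary element of $Z(G)$, this yields $\delta(Z(G))=0$. (Alternatively one may argue directly, bypassing $(i)$: a central group element $z\in Z(G)$ lies in $Z(R[G])$, and $\langle z\rangle$ being finite with order invertible in $R$, Lemma \ref{L:13}$(i)$ gives $\delta(z)=\partial_x(z)=xz-zx=0$.)

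There is essentially no obstacle here; the only point requiring attention is that the hypotheses of $(i)$ are legitimately available for each $z\in Z(G)$ — that $z\in\tau G$, which is immediate from $G$ being torsion, and that every prime dividing $|z|$ is invertible in $R$, which is immediate from the standing hypothesis on $\pi(G)$ together with $\pi(\langle z\rangle)\subseteq\pi(G)$. Once this bookkeeping is in place, $(ii)$ is purely formal: a central group element commutes with all of $G$, so $(i)$ annihilates every coefficient of $\delta(z)$.
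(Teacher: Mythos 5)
Your proof is correct. For part $(ii)$ you do exactly what the paper does: the paper simply records that $(ii)$ follows from $(i)$, and your argument --- apply $(i)$ with $g=z\in Z(G)$ and let $t$ range over all of $G$ --- is the intended deduction; the bookkeeping you flag ($z\in\tau G$ since $G$ is torsion, and $\pi(\langle z\rangle)\subseteq\pi(G)$) is indeed all that needs checking. Where you genuinely diverge is part $(i)$: the paper does not just cite Burkov but reproves the claim by expanding $0=\delta(g^{n})=\sum_{i+j=n-1}g^{i}\delta(g)g^{j}$ with $n=|g|$, isolating the contribution of $h=t$ (which, using $gt=tg$, collapses to $n\alpha_{g,t}$ times a single group element) and invoking the invertibility of $n$. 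You instead feed $H=\langle g\rangle$ into Lemma~\ref{L:13}$(i)$ to write $\delta(g)=xg-gx$ as an honest commutator and read off that the coefficient at any $t$ centralizing $g$ is $\beta_{g^{-1}t}-\beta_{tg^{-1}}=0$. Both are valid; your route makes the role of the commutation hypothesis completely transparent and avoids the (silently needed) check that no monomial $g^{i}hg^{j}$ with $h\neq t$ collides with the isolated term, but it leans on the averaging element $x_{H}$ of Lemma~\ref{L:11}, whereas the paper's computation is self-contained and closer to Burkov's original. Your parenthetical direct proof of $(ii)$ ($z\in Z(R[G])$ forces $\partial_{x}(z)=0$) is likewise sound.
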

\begin{proof} $(i)$ If $n=|g|$, then
\[
\begin{split}
0=\delta (1)&=\delta (g^n)=\sum_{i+j=n-1}g^i\delta(g)g^j=\delta (g^n)=\\
&=\sum_{i+j=n-1}g^i(\sum_{h\in
G}\alpha_{g,h}h)g^j=\sum_{\scriptsize h\in G,\
i+j=n-1}\alpha_{g,h}g^ihg^j\\
&=\sum_{\scriptsize i+j=n-1}\alpha_{g,h}g^itg^j+\sum_{\scriptsize h\in G\setminus
\{t\},\ i+j=n-1}\alpha_{g,h}g^ihg^j\\
&=n\alpha_{g,t}t+\sum_{\scriptsize h\in G\setminus \{t\},\
i+j=n-1}\alpha_{g,h}g^ihg^j
\end{split}
\]
what gives that $\alpha_{g,t}=0$. \quad The part $(ii)$ it holds from $(i)$.
\end{proof}

We obtain the next generalization of \cite[Theorem 1.1]{Ferrero_Giambruno_Milies}.

\begin{corollary} \label{C:20}
Let $R$ be a ring, $G$ a torsion $FC$-group such that each $p\in \pi (G)$ is invertible in $R$ and
$\delta \in \Der_RR[G]$. The following conditions hold: \begin{itemize}
\item[$(i)$] $\delta$ is inner if and only if the set $\delta (G)$
is finite;
\item[$(ii)$] if $G$ is centre-by-finite, then every
$R$-derivation of $R[G]$ is inner.
\end{itemize}
\end{corollary}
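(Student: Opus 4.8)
\textbf{Proof proposal for Corollary \ref{C:20}.}

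The plan is to reduce both statements to the already-established machinery about locally inner derivations and finite supports. For part $(i)$, the forward implication is immediate: if $\delta=\partial_x$ for some $x=\sum_{t\in G}\alpha_t t\in R[G]$, then $x$ has finite support, say contained in a finite set $T\subseteq G$; since $G$ is an $FC$-group, the subgroup $\langle T\rangle$ generated by $T$ is a finitely generated $FC$-group, hence center-by-finite, so $\partial_x(g)=xg-gx$ takes values in the $R$-span of the finite set $\{tg, gt : t\in T, g\in G\}$ — but more to the point, I would argue directly that $\delta(G)=\partial_x(G)=\{xg-gx : g\in G\}$ is finite because only finitely many distinct cosets / conjugation patterns arise. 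Concretely, $xg-gx=\sum_{t\in T}\alpha_t(tg-gt)$, and $tg-gt=0$ unless $g\notin C_G(t)$; writing $tg-gt = (tgt^{-1}-g)tg\cdot$(stuff) one sees the value depends only on the finite data of how $g$ conjugates the finitely many $t\in T$, and since each $t$ has finite conjugacy class, there are only finitely many possibilities. So $\delta(G)$ is finite.

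For the converse direction of $(i)$, I would invoke Lemma \ref{L:18} almost verbatim: a torsion $FC$-group is locally finite (every finitely generated $FC$-group is center-by-finite, hence finite when torsion), and each $p\in\pi(G)$ is invertible in $R$ by hypothesis, so Lemma \ref{L:18} applies directly to give $\delta\in\IDer_RR[G]$ once we know $\delta(G)$ is finite. Thus part $(i)$ is essentially a citation of Lemma \ref{L:18} together with the elementary observation above that torsion $FC$-groups are locally finite.

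For part $(ii)$, suppose $G$ is centre-by-finite, so $|G:Z(G)|=n<\infty$. Let $\delta\in\Der_RR[G]$. By Lemma \ref{L:19}$(ii)$, $\delta(Z(G))=0$ (this uses that $G$ is torsion and that each $p\in\pi(G)$ is invertible in $R$). Pick coset representatives $g_1,\dots,g_n$ of $Z(G)$ in $G$. Then for any $g\in G$ write $g=g_iz$ with $z\in Z(G)$; the Leibniz rule gives $\delta(g)=\delta(g_i)z+g_i\delta(z)=\delta(g_i)z$, so $\delta(g)$ lies in the set $\{\delta(g_i)z : z\in Z(G)\}$. A priori this is infinite, but $\delta(g_i)\in Z(R)[G]$ by Proposition \ref{P:11}$(i)$; I expect the key point is that right multiplication by central group elements merely permutes supports, and combined with Lemma \ref{L:19}$(i)$ (which forbids certain central elements from appearing in $\delta(g)$) one shows $\delta(G)$ is in fact finite — at which point part $(i)$ finishes the job. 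The main obstacle is precisely this finiteness claim in $(ii)$: showing that the orbit $\{\delta(g_i)z : z\in Z(G), i=1,\dots,n\}$ collapses to a finite set. I would handle it by noting $\delta(g_i)=\delta(g_i)$ is fixed, each $g_i$ has finite order $m_i$ (torsion group), and applying the relation $0=\delta(g_i^{m_i})=\bigl(\sum_{k=0}^{m_i-1}g_i^k\bigr)\delta(g_i)$ coming from centrality-adjacent computations as in Lemma \ref{L:19}$(i)$, which pins down $\delta(g_i)$ to finitely many group-ring elements whose translates by $Z(G)$ still form a finite set because the support of each $\delta(g_i)$ meets only finitely many $Z(G)$-cosets (as $\delta(g_i)\in Z(R)[G]$ is a \emph{finite} sum). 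Once $\delta(G)$ is finite, Lemma \ref{L:18} gives $\delta\in\IDer_RR[G]$, completing the proof.
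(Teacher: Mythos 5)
Your overall plan (reduce everything to the finiteness of $\delta(G)$ and then invoke Lemma \ref{L:18}) mirrors the paper's, and the backward implication of $(i)$ is fine: torsion $FC$-groups are indeed locally finite and Lemma \ref{L:18} applies. The trouble is in the two finiteness claims. In the forward direction of $(i)$ you assert that $xg-gx=\sum_{t\in T}\alpha_t(tg-gt)$ takes only finitely many values because it ``depends only on the finite data of how $g$ conjugates the finitely many $t\in T$''. Writing $tg-gt=(t-gtg^{-1})g$, the left factor does range over a finite set, but the right translation by $g$ then produces pairwise distinct elements of $R[G]$: for $c\in C_G(x)$ one has $\partial_x(gc)=\partial_x(g)c$, and these are distinct for distinct $c$ unless $\partial_x(g)=0$. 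Concretely, take $G=A\times S_3$ with $A$ an infinite abelian group of exponent $5$ and $R=\mathbb{Q}$; this is a torsion $FC$-group (even centre-by-finite), yet for $x=(1,(12))$ the values $\partial_x\bigl((a,(13))\bigr)=(a,(132))-(a,(123))$ are pairwise distinct as $a$ runs over $A$, so $\partial_x(G)$ is infinite although $\partial_x$ is inner. Thus the step is not proved (the paper's own one-line justification, ``the index $|G:C_G(x)|$ is finite, hence $\partial_x(G)$ is finite'', has exactly the same defect), and the implication itself is in doubt.

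The same issue defeats your argument for $(ii)$. You rightly notice that $\{\delta(g_i)z : z\in Z(G)\}$ is a priori infinite, but the proposed repair is backwards: right translation by $z$ sends $\supp\delta(g_i)$ to $(\supp\delta(g_i))z$, and these sets are pairwise distinct for $z$ outside the finite stabilizer $\{z : (\supp\delta(g_i))z=\supp\delta(g_i)\}$, so if some $\delta(g_i)\neq 0$ and $Z(G)$ is infinite the orbit is infinite and the reduction of $(ii)$ to the criterion of $(i)$ collapses (again see the example above, where $\delta=\partial_x$ is inner but $\delta(G)$ is infinite). Statement $(ii)$ is nevertheless true, and the finiteness of $\delta(G)$ can be bypassed entirely: since $G$ is torsion $FC$, the subgroup $H=\langle g_1,\dots ,g_n\rangle$ generated by coset representatives of $Z(G)$ is finite, so Lemma \ref{L:13}$(i)$ gives $x$ with $\delta_{|H}=(\partial_x)_{|H}$; then for $g=hz$ with $h\in H$ and $z\in Z(G)$, using $\delta(z)=0$ (Lemma \ref{L:19}$(ii)$) and $z\in Z(R[G])$,
\[
\delta(hz)=\delta(h)z=(xh-hx)z=x(hz)-(hz)x=\partial_x(hz),
\]
whence $\delta=\partial_x$ on all of $G=HZ(G)$ and hence on $R[G]$. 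I would prove $(ii)$ this way and treat the ``only if'' half of $(i)$ with suspicion rather than as an ingredient.
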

\begin{proof} Let $\delta \in \Der_RR[G]$.

$(i)$ $(\Leftarrow )$ It follows by  Lemma \ref{L:18}.

$(\Rightarrow )$ If $\delta$ is inner, then  there exists $x\in
Z(R)[G]$ such that  $\delta=\partial_x$. Since the index $|G:C_G(x)|$ is finite,   the
image $\partial_x(G)$ is finite.

$(ii)$ In as much as $|G:Z(G)|<\infty$, we deduce that $\delta (G)$ is finite in view of Lemma \ref{L:19}$(ii)$. The rest holds from the part $(i)$.      \end{proof}

\begin{proof}[ Proof of the Theorem $\ref{T:3}$]
Let $G$ be an  infinite group with an ascending series of its subgroups
\[
H\leq
\langle H,x_1\rangle \leq \cdots \leq \langle H,x_1,\ldots ,
x_n\rangle \leq  \cdots \] such that $G=\bigcup_{n=1}^\infty
\langle H,x_1,\ldots , x_n\rangle$. There exist  $x,y_n\in R[G]$  by Lemma \ref{L:13}$(i)$, such that
\[
\delta_{|H}=({\partial_x})_{|H}\qquad \text{and}\qquad  \delta_{|\langle
H,x_1,\ldots , x_n\rangle}=({\partial_{y_n}})_{|\langle H,x_1,\ldots, x_n\rangle}.
\]
 Then $y_n-x\in C_{R[G]}(H)$ and so there exists
$n_0\in \mathbb{N}$ such that $y_m=y_{n_0}$ for all $m\geq n_0$. This
yields that $\delta =\partial_{y_m}$.
\end{proof}

\begin{proof} [Proof of the Theorem $\ref{T:4}$] The group ring $R[G]$ is prime and every its non-trivial idempotent is non-central.

$(i)$ Let $G$ be non-abelian. The equation  $(\ref{EE:5})$ and \cite[Corollary 6]{Lanski} imply that either $[G,L]=0$   or $[U(R),M]=0$, where $L$ is some non-central Lie ideal and $M$ is some non-central ideal of $R$, which is impossible by \cite[Lemma 2]{Bergen_Herstein_Kerr}.

$(ii)$ The group   $V(R[G])$ is central  by \cite[Corollary 6]{Lanski} and \cite[Lemma 2]{Bergen_Herstein_Kerr} and so
$\delta (\tau G)=0$ for $\delta \in \Der R[G]$ by Lemma
\ref{L:19}.
\end{proof}

\section{Nilpotency and solvability of derivation rings}

A group $G$ is called {\it $p$-abelian}  ($p>0$) if its
commutator subgroup $G'$ is a finite $p$-group.  A ring $R$ is called {\it Lie hypercentral}, if for each sequence $x,x_1,\ldots ,x_n, \ldots \in R$,  there exists $m\in\mathbb{N}$ such that $[x,x_1,\ldots ,x_m]=0$. Analogously  we can defined the notion of a {\it hypercentral} Lie ring.

\begin{proposition} \label{P:21}
Let $R$ be a ring and let $G$ be a group.
The following conditions hold:
\begin{itemize}
\item[$(i)$] if $\IDer_RR[G]$ is a nilpotent (respectively
solvable) Lie ring, then the unit group $U(Z(R)[G])$ (and
consequently $G$) is a nilpotent (respectively solvable) group;
\item[$(ii)$] if\ $R$ is a division ring of characteristic
$p\geq 0$, then:
\begin{itemize}
\item[$(a)$] the Lie ring $\IDer_RR[G]$ is nilpotent if and only if $G$ is $p$-abelian and nilpotent;
\item[$(b)$] for $p\neq 2$, $\IDer_RR[G]$ is solvable if and only if  $G$ is $p$-abelian;
\item[$(c)$] for $p=2$, $\IDer_RR[G]$ is solvable if and only if  $G$ has  a $2$-abelian subgroup of index at most $2$;
\end{itemize}
\item[$(iii)$] the Lie ring $\IDer_RR[G]$ is hypercentral if and only if one of the following conditions holds:
\begin{itemize}
\item[$(d)$]  $G$ is abelian;
\item[$(e)$] $R$ is of characteristic $p^m$ and $G$ is a nilpotent $p$-abelian group.
\end{itemize}
\end{itemize}
\end{proposition}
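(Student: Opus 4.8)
The plan is to transport all three Lie‑theoretic conditions on $\IDer_RR[G]$ to the associative group ring $Z(R)[G]$ by means of Lemma \ref{GGC4}, and then to appeal to the structure theory of Lie nilpotent, Lie solvable and Lie hypercentral group rings.

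The first step is the elementary observation that, for any Lie ring $L$, the quotient $L/Z(L)$ is nilpotent (respectively solvable, respectively hypercentral) if and only if $L$ itself is: if $L/Z(L)$ is nilpotent of class $c$ then $\gamma_{c+1}L\subseteq Z(L)$ and hence $\gamma_{c+2}L=0$; the derived series of $L/Z(L)$ lifts verbatim to $L$; and a descending left‑normed product $[x,x_1,\ldots,x_m]$ of $L$ lands in $Z(L)$ as soon as its image in $L/Z(L)$ vanishes, so that $[x,x_1,\ldots,x_{m+1}]=0$ (the converse in each case being immediate). Since $\IDer_RR[G]\cong (Z(R)[G])^L/Z(Z(R)[G])$ by Lemma \ref{GGC4}, this shows that $\IDer_RR[G]$ is nilpotent (respectively solvable, respectively hypercentral) exactly when the associated Lie ring $(Z(R)[G])^L$ is, i.e.\ exactly when the group ring $Z(R)[G]$ over the commutative ring $Z(R)$ is Lie nilpotent (respectively Lie solvable, respectively Lie hypercentral).

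For $(i)$ it then suffices, since $G\leq U(Z(R)[G])$, to use the known fact that an associative ring with identity whose associated Lie ring is nilpotent (respectively solvable) has a nilpotent (respectively solvable) group of units; the bridge is the identity $a^{-1}b^{-1}ab=1+a^{-1}b^{-1}[a,b]$ for units $a,b$, iterated along the lower central (respectively derived) series of $(Z(R)[G])^L$. For $(ii)$, the center of the division ring $R$ is a field of characteristic $p$, so $Z(R)[G]$ is a group algebra over a field, and the equivalences $(a)$--$(c)$ are then precisely the Passi--Passman--Sehgal characterizations: $Z(R)[G]$ is Lie nilpotent iff $G$ is nilpotent and $p$-abelian; Lie solvable with $p\neq 2$ iff $G$ is $p$-abelian; Lie solvable with $p=2$ iff $G$ has a $2$-abelian subgroup of index at most $2$ (with the convention that ``$p$-abelian'' means ``abelian'' when $p=0$). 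Feeding these into the reduction above yields $(a)$, $(b)$, $(c)$.

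For $(iii)$, if $G$ is abelian then $Z(R)[G]$ is commutative, so $(Z(R)[G])^L$ is abelian and $\IDer_RR[G]=0$ is hypercentral, giving $(d)$. For the remaining implications one invokes the characterization of Lie hypercentral group rings: $Z(R)[G]$ is Lie hypercentral iff $G$ is abelian, or else $Z(R)$ (equivalently $R$) has characteristic a prime power $p^m$ and $G$ is a nilpotent $p$-abelian group. I expect this last part to be the main obstacle, since $Z(R)$ is only assumed commutative, not a field, so the field‑coefficient results cannot be quoted directly: one must show that Lie hypercentrality of $Z(R)[G]$ forces $G$ to be nilpotent $p$-abelian and forces $\charak R$ to be a prime power (reducing to $G'$, one needs the augmentation ideal of $Z(R)[G']$ to be Lie nilpotent, and for the modular group algebra of a finite $p$-group this already forces $p$ to be nilpotent in $Z(R)$), and conversely that under these hypotheses every descending left‑normed product in $Z(R)[G]$ terminates — the converse being handled by passing through $Z(R)[G']$ and then using nilpotency of $G$ on the abelian quotient $G/G'$.
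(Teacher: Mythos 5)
Your proposal follows the paper's proof essentially verbatim: both reduce, via Lemma \ref{GGC4} and the observation that a Lie ring is nilpotent/solvable/hypercentral iff its central quotient is, to the corresponding Lie-theoretic property of the associative ring $Z(R)[G]$, and then invoke the known classifications. The only difference is one of citation versus sketch: where you outline the unit-group bridge for $(i)$ and worry about non-field coefficients in $(iii)$, the paper simply quotes Gupta--Levin and Sharma--Srivastava for $(i)$, Passi--Passman--Sehgal for $(ii)$, and Bovdi--Khripta for $(iii)$ (the last already covering prime-power-characteristic coefficient rings, so the obstacle you anticipate is absorbed by that citation; likewise your iterated-commutator heuristic in $(i)$ is really the content of Gupta--Levin's theorem rather than an immediate induction).
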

\begin{proof}  $(i)$ Assume that $\IDer_RR[G]$ is nilpotent
(respectively solvable). The Lie ring $(Z(R)[G])^L$ is nilpotent (respectively solvable) by Lemma \ref{GGC4}. Then there exists  $n\in\mathbb{N}$ such that
\[
\gamma_nG\leq \gamma_nU(Z(R)[G])\leq [\underbrace{Z(R)[G],\ldots ,Z(R)[G]}_{\scriptsize n\ \text{times}}]+1=1
\]
by \cite[Theorem A]{Gupta_Levin} (respectively
\[
 G^{(n)}-1\leq U(Z(R)[G])^{(n)}-1\leq
((Z(R)[G])^L)^{(n)}=0
\]
in view of \cite[Lemma  1.2]{Sharma_Srivastava84}).

$(ii)$ $(\Rightarrow )$ If $\IDer_RR[G]$ is nilpotent (respectively solvable), then $(Z(R)[G])^L$ is nilpotent (respectively solvable) by Lemma \ref{GGC4}. The rest follows from \cite[Theorem]{PPS73}.

 $(\Leftarrow )$  In as much as $Z(R)[G]$ is Lie nilpotent (respectively Lie solvable) by \cite[Theorem]{PPS73}, we deduce that $\IDer_RR[G]$ is nilpotent (respectively  solvable) by Lemma \ref{GGC4}.

 $(iii)$ If follows in view of \cite[Theorem]{Bovdi_Khripta86} and Lemma \ref{GGC4}.
\end{proof}

\begin{lemma} \label{P:22}
Let $P$ be an ideal in a ring $B$.
The following conditions hold:
\begin{itemize}
\item[$(i)$] if $\delta (P)\subseteq P$ for some $\delta \in \Der
B$, then
\[
\overline{\delta}:B/P\ni a+P\mapsto \delta (a)+P\in
B/P\] is a derivation of the quotient ring $B/P$;
\item[$(ii)$] if
$A$ is a nilpotent (respectively solvable) subring of $\Der B$ and
$\delta (P)\subseteq P$ for any $\delta \in A$, then
$\overline{A}:=\{ \overline{\delta}\mid \delta \in A\}$ is a
nilpotent (respectively solvable) subring of $\Der (B/P)$.
\end{itemize}
\end{lemma}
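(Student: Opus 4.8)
The plan is to treat $(i)$ as the standard construction of an induced derivation on a quotient ring, and then to deduce $(ii)$ by exhibiting a surjective Lie ring homomorphism $A\to\overline{A}$ and invoking the fact that nilpotency and solvability of Lie rings pass to homomorphic images.

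For part $(i)$ I would first check that $\overline{\delta}$ is well defined: if $a+P=a'+P$, then $a-a'\in P$, hence $\delta(a)-\delta(a')=\delta(a-a')\in P$ because $\delta(P)\subseteq P$, so $\delta(a)+P=\delta(a')+P$. Additivity of $\overline{\delta}$ is immediate from additivity of $\delta$, and the Leibniz rule follows from the one-line computation
\[
\overline{\delta}\big((a+P)(b+P)\big)=\delta(ab)+P=\big(\delta(a)b+a\delta(b)\big)+P=\overline{\delta}(a+P)\,(b+P)+(a+P)\,\overline{\delta}(b+P),
\]
valid for all $a,b\in B$. Hence $\overline{\delta}\in\Der(B/P)$.

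For part $(ii)$ I would consider the map $\Phi:A\ni\delta\mapsto\overline{\delta}\in\Der(B/P)$, which is well defined by $(i)$, since by hypothesis every $\delta\in A$ satisfies $\delta(P)\subseteq P$. Evaluating on an arbitrary coset $a+P$ shows at once that $\overline{\delta+\mu}=\overline{\delta}+\overline{\mu}$ and $\overline{[\delta,\mu]}=[\overline{\delta},\overline{\mu}]$, the latter because both sides carry $a+P$ to $\delta(\mu(a))-\mu(\delta(a))+P$. Thus $\Phi$ is a homomorphism of Lie rings, and $\overline{A}=\Phi(A)$ is therefore a Lie subring of $\Der(B/P)$.

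Finally, since $\Phi$ maps $A$ onto $\overline{A}$, an easy induction gives $\gamma_n\overline{A}=\Phi(\gamma_nA)$ and $\overline{A}^{(n)}=\Phi(A^{(n)})$ for every $n$, because $\Phi$ takes brackets to brackets. Consequently, if $\gamma_{n+1}A=0$ (respectively $A^{(n+1)}=0$), then $\gamma_{n+1}\overline{A}=0$ (respectively $\overline{A}^{(n+1)}=0$), so $\overline{A}$ is nilpotent (respectively solvable) of class (respectively derived length) at most that of $A$. There is no genuine obstacle in this lemma; the only place where the hypothesis is actually used is the well-definedness of $\overline{\delta}$ in part $(i)$, and everything else is formal.
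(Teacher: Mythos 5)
Your proof is correct and is exactly the standard argument the authors have in mind; the paper itself dismisses this lemma with the single word ``Evidently,'' so your write-up simply supplies the routine details (well-definedness from $\delta(P)\subseteq P$, the Leibniz rule passing to cosets, and the fact that $\delta\mapsto\overline{\delta}$ is a Lie ring homomorphism, so nilpotency and solvability pass to the image). Nothing further is needed.
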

\begin{proof} Evidently.
\end{proof}

\begin{lemma} \label{L:23}
Let $B$ be a ring. The  following conditions hold:
\begin{itemize}
\item[$(i)$] if $\delta \in Z(\Der B)$, then $\delta (R)\subseteq
Z(B)$;
 \item[$(ii)$] if $\delta \in Z(\Der B)$, then $\delta
(Z(B))\subseteq \ann D:=\{ r\in B\mid r(\Der B)=0\}$;
\item[$(iii)$] if $B$ is commutative and
$\delta \in Z(\Der B)$, then:
\begin{itemize}
\item[$(a)$] if $\delta$ is surjective as a map, then $B^2=0$;
\item[$(b)$] if $\ann B=0$, then $\delta =0$;
\end{itemize}
\item[$(iv)$] if  $\IDer B$ is Lie nilpotent (or
 equivalently $B$ is Lie nilpotent), then $C(B)\subseteq {\mathbb
 P}(B)$;
\item[$(v)$] if $\IDer B$ is Lie solvable (or
equivalently $B$ is Lie solvable), then $[B^{(n)},B]\subseteq
{\mathbb
 P}(B)$ for some integer $n\geq 0$;
 \item[$(vi)$] if $B$ is semiprime with the solvable (in particular, nilpotent) Lie ring   $\IDer B$, then
$B$ is commutative.
\end{itemize}
\end{lemma}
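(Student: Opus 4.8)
The plan is to split the six assertions into two groups. Parts (i)--(iii) rest on a single commutator identity together with an elementary remark about derivations; parts (iv)--(vi) are obtained by passing to prime factor rings and invoking the Lie structure theory of (semi)prime rings.

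For the first group the starting point is the identity
\[
[\delta,\partial_a]=\partial_{\delta(a)}\qquad(\delta\in\Der B,\ a\in B),
\]
which follows at once from the Leibniz rule. If $\delta\in Z(\Der B)$ then $\partial_{\delta(a)}=[\delta,\partial_a]=0$ for every $a\in B$, i.e.\ $\delta(a)\in Z(B)$; this is (i). For (ii) I would add the remark that, whenever $z\in Z(B)$ and $d\in\Der B$, the map $zd\colon x\mapsto z\,d(x)$ is again a derivation of $B$. Writing $[\delta,zd]=0$ out on an arbitrary $x$ and expanding by the Leibniz rule (using $z\in Z(B)$ and $\delta d=d\delta$), all terms cancel except $\delta(z)\,d(x)$, so $\delta(z)\,d(x)=0$; since $\delta(z)\in Z(B)$ by (i), also $d(x)\,\delta(z)=0$, whence $\delta(z)\in\ann D$. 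Part (iii) is then the commutative specialisation: now $cd\in\Der B$ for every $c\in B$, so the relation $\delta(b)\,d(x)=0$ yields $\delta(B)\delta(B)=0$ (take $d=\delta$) and $\delta(B)\,B\,\delta(B)=0$ (take $d=c\delta$); if $\delta$ is onto the former gives $B^{2}=0$, and if $\ann B=0$ these relations force $\delta=0$.

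For (iv), $\mathbb P(B)$ is the intersection of all prime ideals $P$ of $B$, so it is enough to show $B/P$ commutative for each such $P$; and $B/P$, being a homomorphic image of a Lie nilpotent ring, is prime and Lie nilpotent. Thus the heart of the matter is: \emph{a prime Lie nilpotent ring $R$ is commutative.} I would prove this directly. Assume $[R,R]\neq0$ and let $c\ge2$ be the Lie nilpotency class, so $[\gamma_{c}R,R]=\gamma_{c+1}R=0$, i.e.\ $\gamma_{c}R\subseteq Z(R)$. Choose a nonzero generator $w=[u,a]\in\gamma_{c}R$ with $u\in\gamma_{c-1}R$ and $a\in R$. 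On one hand $[u,ua]=u[u,a]=uw$ belongs to $[\gamma_{c-1}R,R]=\gamma_{c}R\subseteq Z(R)$; on the other hand $w$ is a nonzero central element of a prime ring, hence regular, and $uw\in Z(R)$ together with regularity of $w$ forces $u\in Z(R)$ (from $0=[uw,x]=[u,x]w$ for all $x$), whence $w=[u,a]=0$, a contradiction. So $[R,R]=0$.

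The same passage to $B/P$ handles (v): in the prime ring $\bar B=B/P$ one has $\bar B^{(m)}=0$, where $m$ is the Lie derived length of $B$, so $\bar B^{(m-1)}$ is a Lie-abelian Lie ideal of $\bar B$ and hence, by the classical results on Lie ideals of semiprime rings (cf.\ \cite{Herstein_book, Lanski}), is central; thus $[B^{(m-1)},B]\subseteq P$ for every $P$, i.e.\ $[B^{(m-1)},B]\subseteq\mathbb P(B)$, so $n=m-1$ works. Finally, in (vi) the ring $B$ is semiprime, so $\mathbb P(B)=0$, and $\IDer B\cong B^{L}/Z(B)$ (the map $a\mapsto\partial_a$ has kernel $Z(B)$; cf.\ Lemma \ref{GGC4}), so solvability of $\IDer B$ lifts to $B^{L}$ (an extension of a solvable Lie ring by the abelian ideal $Z(B)$); by (v) then $B^{(n)}\subseteq Z(B)$ for some $n$. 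A descending induction completes the proof: at each step $B^{(k-1)}$ is a Lie ideal with $[B^{(k-1)},B^{(k-1)}]=B^{(k)}\subseteq Z(B)$, so $\partial_u^{3}=0$ for every $u\in B^{(k-1)}$, and in a semiprime ring this forces $u\in Z(B)$; applied down to $k=1$ it yields $B\subseteq Z(B)$, i.e.\ $B$ is commutative. The main obstacle I foresee is precisely the Lie-ideal input used in (v) and (vi) — that a Lie nilpotent Lie ideal of a semiprime ring lies in the centre, together with the well-known small-characteristic caveats attached to such statements; the rest reduces to bookkeeping with commutators and with the identity $[\delta,\partial_a]=\partial_{\delta(a)}$.
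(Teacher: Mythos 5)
Your parts (i)--(iii) follow the paper's own route exactly (commute $\delta$ with $\partial_a$, respectively with $ad$ for central $a$), and your part (iv) is a genuinely different and more self-contained argument than the paper's: where the paper deduces commutativity of the prime Lie nilpotent quotient $B/P$ by citing Jennings \cite{jennings} and a nil-commutator lemma of Bell--Klein \cite{Bell_Klein}, your direct computation (a nonzero central $w=[u,a]\in\gamma_cR$ in a prime ring is regular, $uw=[u,ua]\in\gamma_cR\subseteq Z(R)$, so $[u,x]w=0$ forces $u\in Z(R)$ and $w=0$) is correct and avoids both citations. That part is an improvement.

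The genuine gaps are in (v) and (vi), and both sit in characteristic $2$ --- precisely where the paper spends most of its effort. In (v) you assert that a commutative Lie ideal of a prime ring is central ``by classical results''; this is false in characteristic $2$: in $M_2(F)$ with $\charak F=2$ the set $F\cdot 1+F\cdot e_{12}$ is a commutative, non-central Lie ideal. The paper's proof of (v) is mostly devoted to this case, splitting on whether $Z(B/P)=0$ and invoking \cite{Lanski_Montgomery} (nil Lie ideals of bounded index, and the four-dimensional-over-its-centre classification); your closing remark about ``small-characteristic caveats'' names the obstacle but does not remove it. In (vi) your descent step is false as stated: $\partial_u^3=0$ does not force $u\in Z(B)$ in a semiprime ring, since $\partial_{e_{12}}^3=0$ in $M_2(F)$ for every field $F$ (because $e_{12}^2=0$) while $e_{12}$ is not central. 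Worse, assertion (vi) itself fails without a characteristic restriction: $B=M_2(\mathbb F_2)$ is simple, hence semiprime, and $B^{(2)}=\mathbb F_2\cdot 1=Z(B)$, so $\IDer B\cong B^L/Z(B)$ is solvable of derived length $2$, yet $B$ is not commutative. (The paper's one-line proof of (vi) does not address this either; in characteristic $\neq 2$ your descent can be repaired by applying \cite[Lemma 2]{Bergen_Herstein_Kerr} to the Lie ideal $B^{(k-1)}$, whose derived subring is central, rather than the $\partial_u^3=0$ claim.)

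A smaller point on (iii)(b): the relations you derive, $\delta(B)\delta(B)=0$ and $\delta(B)B\delta(B)=0$, only say that $\delta(B)$ annihilates the ideal generated by $\delta(B)$, not all of $B$, so $\ann B=0$ does not finish the argument; for instance $B=\mathbb F_3[x]/(x^2)$ with $\delta(x)=x$ has $\delta\in Z(\Der B)$ nonzero and $\ann B=0$. The paper's own justification of (iii) is equally terse, so this is a defect of the statement rather than of your argument in particular, but you should not present the step as forced.
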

\begin{proof} Assume that $\delta \in Z(\Der B)$.

$(i)$ In fact, $\partial_x\delta (y)=\delta \partial_x(y)$ for any
$x,y\in B$ and so $[x,\delta (y)]=[\delta (x),y]+[x,\delta (y)]$
what gives that $[\delta (x),y]=0$.

$(ii)$ If $a\in Z(B)$ and $d\in \Der B$, then $ad\in \Der B$. Since
\begin{equation}\label{E:6}
(ad)\delta =\delta (ad)=\delta (a)d+a(\delta d),
\end{equation} we deduce that $\delta (a)d=0$.

$(iii)$ It holds from the part $(ii)$.

$(iv)$ Let $P$ be a prime ideal of $B$. Since $\IDer (B/P)$ is
nilpotent, $B_1:=B/P$ is Lie nilpotent. Clearly, $B_1[[B_1,B_1],B_1]B_1$ is a nilpotent ideal of $B_1$ by
\cite{jennings} (this is also was proved in \cite[Lemma 2.2]{Amberg_Sysak} and \cite[Corollary 2.4]{Szigeti_Van_Wyk}).  It is easy to see that  $\ann B_1=0$, so
$[B_1,B_1]\subseteq Z(B_1)$.  The commutator ideal $C(B_1)$ is nil by \cite [Lemma 1.7]{Bell_Klein}. Consequently,  $B_1$ has a nilpotent ideal
contained in $C(B_1)$ and so $C(B_1)=0$. This means that $B/P$ is
commutative and thus $C(B)\subseteq {\mathbb P}(R)$.

$(v)$  If $\IDer B$ is solvable of length $n$, then $B^L$ is
solvable of length $\leq n+1$. Let $P$ be a prime ideal of $B$. Obviously,
$(B/P)^{(n)}$ is a commutative Lie ideal of the prime ring $B/P$.
If $\charak B/P\neq 2$, then  $(B/P)^{(n)}\subseteq Z(B/P)$ by
\cite[Lemma 2]{Bergen_Herstein_Kerr}.

Now let  $\charak B/P=2$. If $Z(B/P)=0$, then $(B/P)^{(n)}$ is nil of the index $2$ by
\cite[Lemma 2]{Lanski_Montgomery} and so $(B/P)^{(n)}=0$ by \cite[Lemma
1]{Lanski_Montgomery}. Hence $B^{(n)}\subseteq P$. Therefore, we assume that
$Z(B/P)\neq 0$. Consequently,   $(B/P)Z(B/P)^{-1}$ is  simple $4$-dimensional
over its center by \cite[Theorem 4]{Lanski_Montgomery}, in which  $(B/P)Z(B/P)^{-1}$ is the localization of $B/P$ at $Z(B/P)$, and so $(B/P)^{(n)}$ is
central by \cite[Lemma 6]{Lanski_Montgomery}.

In both cases $[B^{(n)},B]\subseteq P$, so $[B^{(n)},B]\subseteq {\mathbb P}(B)$.

$(vi)$ It holds in view of the part $(v)$.
\end{proof}

\begin{proof} [Proof of the Theorem $\ref{T:5}$]  Let $D:=\Der B$.

$(i)$ Assume that $D$ is nilpotent. The ring  $B$
is commutative  by Lemma \ref{L:23}$(iv)$ and  $\delta (B)^2=0$
for any $\delta \in Z(D)$ by Lemma \ref{L:23}$(ii)$ and   $\delta
(B)=0$ in view of the semiprimeness of $B$ and Lemma \ref{L:23}$(ii)$. This means that $\delta =0$ and thus $D=0$.

$(ii)$ Now assume that the Lie ring $D$ is solvable. The ring $B$ is a commutative by Lemma \ref{L:23}$(iv)$ and consequently $D$ is a left $B$-algebra. Assume that the algebra $D$ is nonzero, $A$ is the last nonzero member of its derived chain, $d,\delta \in
A$ and $b\in B$. In as much as $A$ is abelian and $bd\in A$, Eqs. $(\ref{E:6})$ imply that $\delta (b)d=0$. This yields   that $(\delta(B)B)^2=0$. The semiprimeness of $B$ implies  $\delta (B)=0$, a contradiction.
\end{proof}

\begin{proof}[Proof of the Theorem $\ref{T:6}$]
 If every prime $p\in \pi (G)$ is invertible in $\mathbb F$,
then ${\mathbb F}[G]$ is a semiprime ring (see \cite{Connell}). The rest holds from  Theorem  \ref{T:5} and
Proposition \ref{P:21}.
\end{proof}

We need the next
\begin{lemma} \cite[Lemma 1]{Abu-Khuzam} \label{L:25}
Let  $B$ be a ring and let $x,y\in B$. If  $[[x,y],x]=0$,   then
\[
[x^k,y]=kx^{k-1}[x,y]\qquad  (k\in\mathbb{N}).
\]
\end{lemma}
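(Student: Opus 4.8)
The plan is to prove the identity $[x^k,y]=kx^{k-1}[x,y]$ by induction on $k$, using the hypothesis $[[x,y],x]=0$ throughout. The case $k=1$ is trivial, and the case $k=2$ is a direct computation: $[x^2,y]=x^2y-yx^2=x(xy-yx)+(xy-yx)x=x[x,y]+[x,y]x$, and since $[[x,y],x]=0$ means $[x,y]x=x[x,y]$, this equals $2x[x,y]$, as required.

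For the inductive step, I would assume $[x^{k-1},y]=(k-1)x^{k-2}[x,y]$ and write $[x^k,y]=[x\cdot x^{k-1},y]$. Expanding the commutator of a product, $[ab,c]=a[b,c]+[a,c]b$, gives $[x^k,y]=x[x^{k-1},y]+[x,y]x^{k-1}$. By the induction hypothesis the first summand is $(k-1)x^{k-1}[x,y]$. For the second summand, the key point is that $[x,y]$ commutes with $x$ (this is exactly the hypothesis $[[x,y],x]=0$), hence $[x,y]$ commutes with every power $x^{k-1}$, so $[x,y]x^{k-1}=x^{k-1}[x,y]$. Combining, $[x^k,y]=(k-1)x^{k-1}[x,y]+x^{k-1}[x,y]=kx^{k-1}[x,y]$, completing the induction.

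The only thing requiring a word of care is the passage from $[[x,y],x]=0$ to $[[x,y],x^{k-1}]=0$; this is itself a trivial sub-induction, since if an element $z$ commutes with $x$ then it commutes with $x^m$ for all $m$ (because $zx^m = (zx)x^{m-1} = (xz)x^{m-1} = x(zx^{m-1}) = \cdots = x^m z$). I do not anticipate any genuine obstacle here — the statement is a standard commutator-calculus lemma, and the entire argument is a two-line induction once one records that $[x,y]$ is central in the subring generated by $x$ and $[x,y]$.
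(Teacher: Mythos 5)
The paper gives no proof of Lemma~\ref{L:25} at all; it is quoted verbatim from \cite[Lemma~1]{Abu-Khuzam}, so there is nothing internal to compare against. Your induction is correct and complete: the product rule $[ab,c]=a[b,c]+[a,c]b$, the induction hypothesis, and the observation that $[[x,y],x]=0$ forces $[x,y]$ to commute with all powers of $x$ are exactly the ingredients needed, and this is the standard argument for this commutator identity.
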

A  group $G$ is called {\it $n$-divisible} ($n\in\mathbb{N}$) if, for each $g\in
G$, there exists $h\in G$ such that  $g=h^n$.

\begin{proposition}\label{L:26}
Let $R$ be a ring and let $G$ be a torsion group such that each $p\in \pi(G)$ is invertible in $R$ (respectively $nR=0$ and $G$ a $n$-divisible group for some $n\in \mathbb{N}$). If $G$ is an Engel  set in $R[G]$, i.e.
for each $g,h\in G$ there exists $m=m(g,h)\in\mathbb{N}$
such that $[g,_m h]=0$,  then
$G$ is abelian and $\Der_RR[G]=0$.
\end{proposition}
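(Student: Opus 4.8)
The plan is to establish the two conclusions in turn: first that $G$ is abelian, then that $\Der_R R[G]=0$. For the first it is enough to show, for arbitrary $g,h\in G$, that $gh=hg$ once $[g,{}_mh]=0$. Write $\ell_h,r_h\colon R[G]\to R[G]$ for left and right multiplication by $h$ and $\sigma_h\colon x\mapsto h^{-1}xh$ for conjugation by $h$; these are $R$-linear, $\ell_h$ is invertible, and $r_h-\ell_h=\ell_h\circ(\sigma_h-1)$ with $\ell_h$ and $\sigma_h$ commuting, so $[g,{}_mh]=(r_h-\ell_h)^m(g)=\ell_h^{m}(\sigma_h-1)^m(g)$; thus the Engel condition says exactly $(\sigma_h-1)^m(g)=0$, while $gh=hg$ says $(\sigma_h-1)(g)=0$. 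Since $G$ is torsion, $k:=|h|$ is finite, $\sigma_h^{k}=\mathrm{id}$, and $\sigma_h$ permutes the distinct conjugates $g,\sigma_h(g),\sigma_h^{2}(g),\dots$ in a single orbit of length $d=|\langle h\rangle:\langle h\rangle\cap C_G(g)|$; these conjugates are part of the $R$-basis of $R[G]$, so the annihilator of $g$ in $R[x]$ (acting through $\sigma_h$) is exactly $(x^{d}-1)$ and $(\sigma_h-1)^m(g)=0$ is equivalent to $(x^{d}-1)\mid(x-1)^m$ in $R[x]$.

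In the first case the primes dividing $k=|h|$ lie in $\pi(G)$, so $k$ is invertible in $R$ and $e:=\tfrac1k\sum_{i=0}^{k-1}\sigma_h^{i}$ is an $R$-linear idempotent with $\sigma_he=e$. It splits $R[G]=\im e\oplus\im(1-e)$ into $\sigma_h$-invariant submodules with $\im e=\Ker(\sigma_h-1)$ and $\sigma_h-1$ injective on $\im(1-e)$ (if $(\sigma_h-1)v=0$ with $v\in\im(1-e)$ then $v\in\Ker(\sigma_h-1)=\im e$, so $v=ev=0$). From $(\sigma_h-1)^m(g)=(\sigma_h-1)^m\bigl((1-e)g\bigr)=0$ we get $(1-e)g=0$, i.e.\ $g\in\Ker(\sigma_h-1)$ and $gh=hg$. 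Hence $G$ is abelian, and $\Der_R R[G]=0$ by Theorem~\ref{T:2}.

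In the second case one reduces to $R$ of prime characteristic $p$ with $G$ still $p$-divisible: split $R$ along its additive primary decomposition --- compatibly with $R$-derivations --- and pass to a residue quotient of characteristic $p$; $n$-divisibility of $G$ descends to $p$-divisibility. For a given $h$ the part of $|h|$ prime to $p$ is invertible in $R$, and the corresponding aspect of the conjugation orbit of $g$ is killed by averaging exactly as in the first case, so one may assume $|h|$ is a $p$-power. Now for each $t\ge1$ pick $h_t\in G$ with $h_t^{p^{t}}=h$; since $\charak R=p$, Frobenius gives $\sigma_h-1=\sigma_{h_t}^{p^{t}}-1=(\sigma_{h_t}-1)^{p^{t}}$. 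The Engel condition applied to $(g,h_t)$ gives $(\sigma_{h_t}-1)^{m_t}(g)=0$, which by the remark above forces the $\sigma_{h_t}$-orbit of $g$ to have $p$-power length; tracking this length along the tower $h=h_0,h_1,h_2,\dots$ and substituting into $\sigma_h-1=(\sigma_{h_t}-1)^{p^{t}}$ yields that the $\sigma_h$-orbit of $g$ has length $1$, i.e.\ $gh=hg$. Thus $G$ is abelian. Finally, for $\delta\in\Der_R R[G]$ and any $g=w^{n}\in G$ we have $\delta(w)\in Z(R)[G]\subseteq Z(R[G])$ (Proposition~\ref{P:11}(i) with $G$ abelian), so $\delta(g)=\delta(w^{n})=nw^{n-1}\delta(w)=0$ because $nR[G]=0$; hence $\delta(G)=0$ and $\delta=0$.

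The one genuinely delicate point is the abelianness step in the second case. Although $\sigma_h-1=(\sigma_{h_t}-1)^{p^{t}}$ exhibits $(\sigma_h-1)(g)$ as a high power of $(\sigma_{h_t}-1)$ applied to $g$, the Engel exponent $m_t=m(g,h_t)$ attached to the root $h_t$ is not bounded in advance, so one cannot simply invoke $p^{t}\ge m_t$; the $p$-divisibility must be used to pin down how the $p$-power length of the conjugation orbit of $g$ propagates from $h_t$ to $h_{t+1}$, and thereby to force that length to $1$. Everything else is routine: the operator identity $[g,{}_mh]=\ell_h^{m}(\sigma_h-1)^m(g)$, the averaging idempotent of the first case, the characteristic-$p$ reduction, and the concluding computation $\delta(w^{n})=nw^{n-1}\delta(w)$.
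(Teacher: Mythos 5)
Your treatment of the first alternative ($G$ torsion, every $p\in\pi(G)$ invertible in $R$) is correct and complete, and it takes a genuinely different route from the paper's. The paper applies Lemma~\ref{L:25} to the last non-vanishing Engel commutator $a=[g,{}_{m-2}h]$, obtains $0=[h^{|h|},a]=|h|\,h^{|h|-1}[h,a]$, cancels $|h|$ and $h^{|h|-1}$, and descends one step at a time; you instead factor $[g,{}_mh]=\ell_h^{\,m}(\sigma_h-1)^m(g)$ and kill $(1-e)g$ with the averaging idempotent $e=\frac{1}{|h|}\sum_i\sigma_h^i$. The two arguments cost about the same, but yours isolates exactly what the Engel condition says about the conjugation orbit of $g$, which is a real gain in transparency.

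The second alternative is where the genuine gap sits, and it sits exactly where you flagged it: the ``tracking along the tower'' cannot be carried out. Write $p^{c}$ for the length of the $\langle h\rangle$-orbit of $g$ and $p^{s_t}$ for that of the $\langle h_t\rangle$-orbit. Comparing the unique subgroups of given $p$-power index in the cyclic group $\langle h_t\rangle$ gives $c=\max(0,s_t-t)$, so if $c\ge 1$ the tower merely records $s_t=t+c$ for every $t$; this is self-consistent and forces nothing. Indeed no argument can close this gap, because the second alternative as stated is false. Every locally finite $p$-group is an Engel set in $\mathbb{F}_p[G]$: with $m=|h|=p^{k}$ one has $(\sigma_h-1)^{p^{k}}=\sigma_h^{p^{k}}-1=0$ in characteristic $p$, hence $[g,{}_{p^k}h]=0$. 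And non-abelian $p$-divisible locally finite $p$-groups exist: embed a non-abelian finite $p$-group $H$ diagonally into $H\wr C_p$, where $\Delta(h)=\bigl((h,1,\dots,1)\sigma\bigr)^p$ is a $p$-th power for every $h\in H$, and iterate the construction, taking the union. With $R=\mathbb{F}_p$ and $n=p$ all hypotheses of the second case hold, yet $G$ is not abelian. (The paper's own one-line treatment of this case suffers from the same defect: from $nR=0$ and $[h^n,a]=nh^{n-1}[h,a]$ one gets only $[h^n,a]=0$, not $[h,a]=0$, so its downward induction does not close either.) Two smaller remarks: in this case you speak of ``the part of $|h|$ prime to $p$'', which presupposes that $h$ has finite order, something the $n$-divisible hypothesis does not grant; and your concluding computation $\delta(w^{n})=nw^{n-1}\delta(w)=0$, once abelianness is granted, is fine and is in fact needed, since Theorem~\ref{T:2} covers only the torsion case.
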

\begin{proof}
Let $g,h\in G$. Since $G$ is an Engel  set, there exists $m\in\mathbb{N}$
such that $[g,_m h]:=[a,h,h]=0$, where $a:=\begin{cases} [g,_{m-2} h] & \text{  if   } m\geq 3;\\
g& \text{  if   }  m=2.
\end{cases}
$ \quad
Now, if each $p\in \pi(G)$ is invertible in $R$, then
\[
0=[h^{|h|},a]=|h|h^{|h|-1}[h,a]
\]
(see  Lemma \ref{L:25}).  If $nR=0$ and $G$ is a $n$-divisible group, then $[h^n,a]=n\big(h^{n-1}[h,a]\big)=0$ (see  Lemma \ref{L:25}). Thus  $[g,a]=0$ in both cases, so $[g,h]=0$ by induction.  Consequently,   $G$ is abelian and  $\Der_RR[G]=0$ in view of Theorem \ref{T:2}.
\end{proof}

\section{Case of  nilpotent groups}

Each nilpotent group $G$ of class  $m$ has a  central series
\[
1=Z_0<Z_1<\cdots <Z_{m-1}<Z_m=G
\]
in which $Z(G/Z_{i-1})=Z_i/Z_{i-1}$ for all $i=1,\ldots, m$.

\begin{lemma}\label{L:27}
Let $R$ be a ring such that $nR=0$ for some  $n\in \mathbb{N}$, $G$ a divisible torsion-free nilpotent group of the nilpotent length $m$  and $\delta \in \Der R[G]$. Then $\mathfrak{I}_R(Z_k)$ is a $\delta$-ideal for each $k=1,\ldots ,m$.
\end{lemma}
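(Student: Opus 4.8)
The plan is to prove the statement by induction on $k$, using the central series
\[
1=Z_0<Z_1<\cdots <Z_{m-1}<Z_m=G,
\]
and reducing the statement at step $k$ to the already-proved statement for $k-1$ via the quotient isomorphism $R[G]/\mathfrak{I}_R(Z_{k-1})\cong R[G/Z_{k-1}]$. The base case $k=1$ is the heart of the matter: $Z_1=Z(G)$ is central, so for $z\in Z_1$ and arbitrary $g\in G$, $r\in R$ one has
\[
\delta\big(rg(z-1)\big)=r\delta(g)(z-1)+rg\,\delta(z),
\]
and the first summand lies in $\mathfrak{I}_R(Z_1)$ automatically (that ideal is two-sided since $Z_1\trianglelefteq G$). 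So everything comes down to showing $\delta(z)\in\mathfrak{I}_R(Z_1)$ for $z\in Z_1$. Here is where the hypotheses enter: $G$ is divisible and torsion-free and $nR=0$. Since $z$ is central, Lemma~\ref{L:25} (with $x=z$, $y$ anything, noting $[[z,y],z]=0$ because $z$ is central so actually $[z,y]=0$) is not quite the tool; instead I would argue directly. Because $G$ is $n$-divisible, write $z=w^n$ with $w\in Z_1$ (divisibility of $G$ restricts to divisibility of the isolated central subgroup, or one simply takes any $n$-th root $w$ of $z$ in $G$ and notes $w$ centralizes $z$ hence lies in a relevant subgroup). Then
\[
\delta(z)=\delta(w^n)=\sum_{i+j=n-1}w^i\delta(w)w^j = n\,w^{n-1}\delta(w)=0
\]
since $nR=0$ forces the coefficient ring contribution to vanish — more precisely $n\cdot(w^{n-1}\delta(w))=0$ because $\delta(w)\in R[G]$ and $nR[G]=0$. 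Hence $\delta(z)=0\in\mathfrak{I}_R(Z_1)$, which gives the base case (in fact the stronger statement $\delta(Z_1)=0$).

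**Next I would handle the inductive step.** Suppose $\mathfrak{I}_R(Z_{k-1})$ is a $\delta$-ideal. Then by Lemma~\ref{P:22}$(i)$, $\delta$ induces a derivation $\overline{\delta}$ on $R[G]/\mathfrak{I}_R(Z_{k-1})\cong R[\overline G]$ where $\overline G=G/Z_{k-1}$. The group $\overline G$ is again divisible, torsion-free (the central series quotients of a torsion-free divisible nilpotent group are torsion-free and divisible — this needs the standard fact that $Z_{k-1}$ is isolated in a torsion-free nilpotent group, so $G/Z_{k-1}$ is torsion-free, and divisibility passes to quotients), and $\overline G$ is nilpotent of class $m-(k-1)$ with center $Z_k/Z_{k-1}$. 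Moreover $n\cdot R=0$ still holds for the coefficient ring. So the base case applied to $\overline\delta$ and $\overline G$ shows $\overline\delta$ kills $Z_k/Z_{k-1}$, i.e. $\mathfrak{I}_R(Z_k/Z_{k-1})$ is a $\overline\delta$-ideal of $R[\overline G]$. Pulling back along the quotient map, the preimage of $\mathfrak{I}_R(Z_k/Z_{k-1})$ in $R[G]$ is exactly $\mathfrak{I}_R(Z_k)$ (since $\mathfrak{I}_R(Z_k)\supseteq\mathfrak{I}_R(Z_{k-1})$ and the images of the generators $1-z$, $z\in Z_k$, generate $\mathfrak{I}_R(Z_k/Z_{k-1})$), and a $\delta$-stable ideal whose image is $\overline\delta$-stable is itself $\delta$-stable. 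This completes the induction.

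**The main obstacle** I anticipate is the bookkeeping around quotients of divisible torsion-free nilpotent groups: one must be careful that $G/Z_{k-1}$ inherits both divisibility and torsion-freeness, which rests on $Z_{k-1}$ being the isolator-type subgroup in the upper central series (true here since the $Z_i$ are the terms of the upper central series and in a torsion-free nilpotent group these are isolated). A secondary subtlety is the passage between $\mathfrak{I}_R(Z_k)$ and $\mathfrak{I}_R(Z_k/Z_{k-1})$ under the ring isomorphism $R[G]/\mathfrak{I}_R(Z_{k-1})\cong R[G/Z_{k-1}]$, together with the observation (used implicitly) that it suffices to check the $\delta$-ideal property on the generating set $\{rg(z-1): r\in R, g\in G, z\in Z_k\}$ of $\mathfrak{I}_R(Z_k)$. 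Both points are routine but worth stating cleanly; the genuinely computational content — the identity $\delta(w^n)=n\,w^{n-1}\delta(w)=0$ — is a one-line telescoping sum exactly as in the proof of Proposition~\ref{P:10}$(iii)$.
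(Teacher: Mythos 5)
Your proof is correct and follows essentially the same route as the paper's: the base case uses divisibility to write $z=w^{n}$ with $w\in Z_1$ (central, so $\delta(w)$ is central and the telescoping sum gives $\delta(z)=n\,w^{n-1}\delta(w)=0$ because $nR=0$), and the inductive step passes to the induced derivation on $R[G]/\mathfrak{I}_R(Z_{k-1})\cong R[G/Z_{k-1}]$, exactly as in the paper. (Only a cosmetic slip: since $\delta$ is not assumed to be an $R$-derivation, the expansion of $\delta\bigl(rg(z-1)\bigr)$ also contains the term $\delta(r)g(z-1)$, which lies in $\mathfrak{I}_R(Z_1)$ by the same two-sidedness argument you already invoke.)
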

\begin{proof} Each subgroup $Z_k$ is  divisible and normal in $G$. Moreover,  $\delta (Z_1)\subseteq Z(R[G])$. Now we have
\[
\delta (rg(z-1))=\delta (r)g(h-1)+r\delta (g)(h-1)+rg\delta (h)\quad \text{and}\quad  \delta (h^n)=nh^{n-1}\delta (h)= 0
\]
for any $r\in R$, $g\in G$ and $z\in Z_1$.  We deduce that $\delta (\mathfrak{I}_R(Z_1))\subseteq  \mathfrak{I}_R(Z_1)$.
 There is a Lie ring isomorphism
\[
\Der R[G/Z_1]\ni \overline{\delta}_\varphi \mapsto \overline{\delta}\in \Der R[G]/\mathfrak{I}_R(Z_1)
\]
such that
\[
\overline{\delta}:R[G]/\mathfrak{I}_R(Z_1)\ni a+\mathfrak{I}_R(Z_1)\mapsto \delta (a)+\mathfrak{I}_R(Z_1)\in R[G]/\mathfrak{I}_R(Z_1).
\]
In as much as
\[
\overline{\delta}_\varphi(\mathfrak{I}_R(Z_2/Z_1))\subseteq
\mathfrak{I}_R(Z_2/Z_1)
\]
 and we have the following ring isomorphisms
\[
R[G/Z_1]/\mathfrak{I}_R(Z_2/Z_1)\cong R[(G/Z_1)/(Z_2/Z_1)]\cong
R[G/Z_2]\cong R[G]/\mathfrak{I}_R(Z_2),
\]
 we conclude that $\delta
(\mathfrak{I}_R(Z_2))\subseteq \mathfrak{I}_R(Z_2)$. Thus the assertion  follows by
induction.
\end{proof}

\begin{proposition} \label{P:28}
Let $R[G]$ be the group ring of a nilpotent group $G$ of class  $cl(G)=m\geq 2$ over a
ring $R$. If $G$ is  torsion and $\pi (G)\cap \pi (F(R))=\varnothing$ (respectively $G$ is torsion-free and $nR=0$ for some $n\in\mathbb{N}$), then each $\mathfrak{I}_R(Z_i)$ is a $\delta$-ideal ($i\geq 1$) and
\[
\delta_1\delta_2\cdots \delta_{m-1}(R[G])\subseteq \mathfrak{I}_R(Z_1) \qquad\quad  (\delta ,\delta_1, \ldots, \delta_{m-1}\in  \Der_RR[G]).
\]
Moreover, if  $G$ is a torsion abelian group and $\pi (G)\cap \pi (F(R))=\varnothing$ (respectively $G$ is an abelian torsion-free group and $nR=0$ for some $n\in\mathbb{N}$), then $\Der_RR[G]=0$.
\end{proposition}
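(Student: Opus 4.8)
The plan is to split the statement into three pieces — that each $\mathfrak{I}_R(Z_i)$ is a $\delta$-ideal, that the $(m-1)$-fold composition lands in $\mathfrak{I}_R(Z_1)$, and the concluding abelian assertion — and to handle the abelian case first so that it can serve as the base of an induction on $m=cl(G)$.

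\emph{The abelian case (the ``moreover'').} If $G$ is torsion abelian with $\pi(G)\cap\pi(F(R))=\varnothing$, then by Proposition~\ref{P:11}$(i)$ every $\delta\in\Der_RR[G]$ carries each $g\in G$ into $Z(R)[G]$, which is contained in $Z(R[G])$ because $G$ is abelian; hence $\Der_RR[G]=\ZDer_RR[G]$, and this is $0$ by Proposition~\ref{P:11}$(iii)$. In the divisible torsion-free case with $nR=0$ the same two facts give $\Der_RR[G]=\ZDer_RR[G]=0$.

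\emph{The $\delta$-ideal claim.} In the torsion case I would first show that $\delta(Z_1)=0$ for every $\delta\in\Der_RR[G]$. If $z\in Z_1=Z(G)$ has order $n$, then $z$ is central in $R[G]$, so $0=\delta(z^n)=nz^{n-1}\delta(z)$ and therefore $n\delta(z)=0$; writing $\delta(z)=\sum_h c_hh$ we get $nc_h=0$, so a nonzero $c_h$ would lie in $F(R)$ with additive order dividing $n$, whose prime divisors then lie simultaneously in $\pi(G)$ and in $\pi(F(R))$, contradicting $\pi(G)\cap\pi(F(R))=\varnothing$. Consequently, for $r\in R$, $g\in G$, $z\in Z_1$ one has $\delta(rg(z-1))=r\delta(g)(z-1)\in\mathfrak{I}_R(Z_1)$, the last membership because $Z_1$ is normal, so $\mathfrak{I}_R(Z_1)$ is a two-sided ideal; thus $\mathfrak{I}_R(Z_1)$ is a $\delta$-ideal. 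Then $R[G]/\mathfrak{I}_R(Z_1)\cong R[G/Z_1]$, where $G/Z_1$ is torsion nilpotent of class $m-1$ with $\pi(G/Z_1)\subseteq\pi(G)$ and $\delta$ induces an $R$-derivation; repeating the argument along $Z_1<Z_2<\cdots$, together with the isomorphisms
\[
R[G/Z_1]/\mathfrak{I}_R(Z_2/Z_1)\cong R[G/Z_2]\cong R[G]/\mathfrak{I}_R(Z_2),
\]
gives by induction that every $\mathfrak{I}_R(Z_i)$ is a $\delta$-ideal. In the divisible torsion-free case this is precisely Lemma~\ref{L:27}.

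\emph{The composition inclusion.} I would induct on $m=cl(G)\geq 2$. If $m=2$, the product is the single factor $\delta_1$, $G/Z_1$ is abelian, and the induced derivation $\overline{\delta}_1\in\Der_RR[G/Z_1]$ is $0$ by the abelian case, so $\delta_1(R[G])\subseteq\mathfrak{I}_R(Z_1)$. If $m\geq 3$, the $\delta_i$ induce $\overline{\delta}_i\in\Der_RR[G/Z_1]$ (using the $\delta$-ideal claim), $G/Z_1$ has class $m-1$, and the inductive hypothesis applied to $G/Z_1$ with the $m-2$ derivations $\overline{\delta}_2,\ldots,\overline{\delta}_{m-1}$ yields $\overline{\delta}_2\cdots\overline{\delta}_{m-1}(R[G/Z_1])\subseteq\mathfrak{I}_R(Z_2/Z_1)$, i.e. $\delta_2\cdots\delta_{m-1}(R[G])\subseteq\mathfrak{I}_R(Z_2)$. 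The main obstacle is then to improve this by one level, namely to show $\delta_1(\mathfrak{I}_R(Z_2))\subseteq\mathfrak{I}_R(Z_1)$: in
\[
\delta_1(a(z-1))=\delta_1(a)(z-1)+a\delta_1(z)\qquad(a\in R[G],\ z\in Z_2),
\]
the term $a\delta_1(z)$ lies in $\mathfrak{I}_R(Z_1)$ because $\overline{\delta}_1$ kills $Z(G/Z_1)=Z_2/Z_1$ by the computation above, but the coefficient term $\delta_1(a)(z-1)$ only obviously lies in $\mathfrak{I}_R(Z_2)$. To push it down I would combine the facts that $[Z_2,G]\subseteq Z_1$ makes $z$ central modulo $\mathfrak{I}_R(Z_1)$, that $\delta_1(R[G])\subseteq\mathfrak{I}_R(Z_{m-1})$ because $G/Z_{m-1}$ is abelian, and a careful bookkeeping of these facts along the whole series $Z_1<Z_2<\cdots<Z_{m-1}$; this is the delicate point of the argument. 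Once it is established one obtains $\delta_1\cdots\delta_{m-1}(R[G])\subseteq\mathfrak{I}_R(Z_1)$, completing the induction, and the final assertion for abelian $G$ is the case already treated at the start.
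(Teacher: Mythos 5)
Your handling of the $\delta$-ideal claim and of the abelian ``moreover'' is essentially the paper's own route: establish $\delta(Z_1)=0$ from $0=\delta(z^{|z|})=|z|z^{|z|-1}\delta(z)$ together with $\pi(G)\cap\pi(F(R))=\varnothing$, deduce that $\mathfrak{I}_R(Z_1)$ is a $\delta$-ideal, pass to $R[G]/\mathfrak{I}_R(Z_1)\cong R[G/Z_1]$ and climb the upper central series. Two small remarks there: the identification $\Der_RR[G]=\ZDer_RR[G]$ for abelian $G$ is not quite automatic (for $r\in R$ and $c\in Z(R)$ the product $rc$ need not be central), but what you actually use is the computation of Theorem~\ref{T:2}, so nothing is lost; and in the torsion-free branch you quietly insert ``divisible'', which is in fact needed (for $G=\mathbb{Z}$ and $R=\mathbb{F}_p$ the map $x_n\mapsto nx_n$ is a nonzero $R$-derivation) and which matches Lemma~\ref{L:27} and Proposition~\ref{P:11}$(iii)$, although the printed statement omits the word.

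The genuine gap is exactly the step you flag as delicate, $\delta_1(\mathfrak{I}_R(Z_2))\subseteq\mathfrak{I}_R(Z_1)$, and no amount of bookkeeping will close it, because the displayed inclusion $\delta_1\cdots\delta_{m-1}(R[G])\subseteq\mathfrak{I}_R(Z_1)$ fails once $m\geq 3$. Reducing modulo $\mathfrak{I}_R(Z_1)$, the case $m=3$ asserts $\overline{\delta}_1\overline{\delta}_2=0$ on $R[G/Z_1]$; taking $R$ commutative and $\delta_1=\delta_2=\partial_x$ (these are $R$-derivations), this would force $[\overline{x},[\overline{x},b]]=0$ for all $\overline{x},b\in R[G/Z_1]$, i.e.\ Lie nilpotency of class $2$ of $R[G/Z_1]$, which is impossible for non-abelian $G/Z_1$ with $|G/Z_1|$ invertible in $R$: the semisimple algebra $R[G/Z_1]$ then has a noncommutative simple component $M_n(D)$, where $[e_{11},[e_{11},e_{12}]]=e_{12}\neq 0$. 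Concretely, $G=D_{16}$, $R=\mathbb{Q}$, $G/Z_1\cong D_8$, $\mathbb{Q}[D_8]\cong\mathbb{Q}^4\oplus M_2(\mathbb{Q})$ already defeats the claim. You should also know that the paper's own proof meets the same obstruction and disposes of it via the inclusion $\mathfrak{I}_R(Z_{m-1})\subseteq R[Z_{m-1}]$, which is false ($g(1-h)\notin R[Z_{m-1}]$ for $g\notin Z_{m-1}$, $h\in Z_{m-1}$); what the available ingredients actually yield is only $\delta_1\cdots\delta_{m-1}(R[G])\subseteq\mathfrak{I}_R(Z_{m-1})$. So your instinct to isolate this step was correct, but the honest conclusion is that it cannot be established, and the composition inclusion should be either weakened or supplied with additional hypotheses.
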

\begin{proof}
Assume also that $\delta \in \Der_RR[G]$, $r\in R$, $g\in G$, $a\in Z_1$  and
$\delta (g)=\sum_{f\in G}x_ff\in Z(R)[G]$.
Obviously,  $\delta  (Z_1)\subseteq \delta (Z(R[G]))\subseteq Z(R[G])$ and $L_\delta :Z_1\to R^+$ is a group homomorphism by Lemma \ref{L:8}$(iii)$. Therefore, $a^{-1}\delta (a^n)=\delta (1)=0$ if $a$ is of order $n$  (respectively $nL_\delta (a)=0$) and thus $\delta (a)=0$ in view of Eqs. $(\ref{E:2})$. So, we obtain that
$\delta (Z_1)=0$. The ideal  $\mathfrak{I}_R(Z_1)$ is a $\delta$-ideal by Lemma \ref{L:13}.
This implies that $\mathfrak{I}_R(Z_1)$ is a $\delta$-ideal and so
\[
\overline{\delta}:R[G]/\mathfrak{I}_R(Z_1)\ni a+\mathfrak{I}_R(Z_1)\mapsto \delta (a)+\mathfrak{I}_R(Z_1)\in R[G]/\mathfrak{I}_R(Z_1)
\]
is a derivation of the quotient ring $R[G]/\mathfrak{I}_R(Z_1)$. A ring isomorphism
$
\varphi :R[G]/\mathfrak{I}_R(Z_1) \to R[G/Z_1]$ induces a Lie ring isomorphism
\[
\Der (R[G]/\mathfrak{I}_R(Z_1)\ni \overline{\delta}\mapsto \overline{\delta}_\varphi \in
 \Der R[G/Z_1]\] such that $\overline{\delta}_\varphi(\overline{Z}_2)=\overline{0}$, where $\overline{Z}_2:=Z_2/Z_1$, what forces that $\delta (Z_2)\subseteq \mathfrak{I}_R(Z_1)$. By induction, we deduce that  $\delta (Z_i)\subseteq \mathfrak{I}_R(Z_{i-1})$ and so
\[
[Z_i,G]\subseteq \mathfrak{I}_R(Z_{i-1})\qquad (i=2,\ldots ,m).
 \]
Moreover, each $\mathfrak{I}_R(Z_{i})$ is a $\delta$-ideal  by Lemma \ref{L:13}.

Now, since $\Der_RR[G/Z_{m-1}]=0$, we conclude  that $\delta (R[G])\subseteq \mathfrak{I}_R(Z_{m-1})\subseteq R[Z_{m-1}]$.
By induction, we obtain that $\delta (R[Z_i])\subseteq \mathfrak{I}_R(Z_{i-1})$ for any $i=2,\ldots ,m$ and the result follows.
\end{proof}

\begin{proof}[Proof of Theorem $\ref{T:7}$] Let $R$ be a ring,
$G=\{ x_n\mid n\in \mathbb{Z}\}$  a countable torsion-free abelian group. Let    $\alpha
=\sum_{n\in \mathbb{Z}}a_nx_n\in R[G]$, $ z=\sum_{n\in \mathbb{Z}}z_nx_n \in Z(R[G])$, $\delta \in \Der R[G]$, $x\in G$ and  $r\in R$. Clearly,   $Z(R[G])=Z(R)[G]$.  Since $xr=rx$,
\[
\delta (x)r+x\delta (r)=\delta (r)x+r\delta (x)
\] and  $\delta (x)r=r\delta (x)$. Hence $\delta
(x)\in Z(R[G])=Z(R)[G]$. In addition,
\[
0=\delta (1)=\delta
(xx^{-1})=\delta (x)x^{-1}+x\delta (x^{-1})\] and so
$\delta
(x^{-1})=-x^{-1}\delta (x)x^{-1}=-x^{-2}\delta (x)$. Thus $\delta (\alpha )=\sum_{n\in \mathbb{Z}}\delta
(a_n)x_n+\sum_{n\in \mathbb{Z}}a_n\delta (x_n)$.

If  $a,b\in R\subset R[G]$,  then $\delta (a), \delta (b)\in R[G]$, so we can write
\[
\delta (a)=\sum_{n\in \mathbb{Z}}D_n(a)x_n\quad \text{and} \quad  \delta (b)=\sum_{n\in \mathbb{Z}} D_n(b)x_n, \qquad (D_n(a), D_n(b)\in R)
\]
in which  almost all coefficients  $D_n(a)$ and $D_n(b)$ are zero. Now
\[
\sum_{n\in \mathbb{Z}} (D_n(a)+D_n(b))x_n=\delta (a)+\delta (b)=\delta (a+b)=
\sum_{n\in \mathbb{Z}} D_n(a+b)x_n
\]
implies that
$D_n(a+b)=D_n(a)+D_n(b)$,
\[
\begin{split} \sum_{n\in \mathbb{Z}} D_n(ab)x_n&=\delta (ab)= \delta (a)b+a\delta (b)=\\
&=(\sum_{n\in \mathbb{Z}} D_n(a)x_n)b+ a(\sum_{n\in \mathbb{Z}} D_n(b)x_n)=\\
 &=\sum_{n\in \mathbb{Z}} (D_n(a)b+aD_n(b))x_n\end{split}
\]
implies that  $D_n(a b)=D_n(a)b+aD_n(b)$.

Thus $D_n\in \Der R$ for any $n\in \mathbb{Z}$ and we have  the following Lie ring
isomorphism
\[
\Der R[G] \ni \delta\mapsto ({\{
D_n\}_{n\in \mathbb{Z}}}, \{ \delta (x_n)\}_{n\in \mathbb{Z}}
)\in {\mathcal LF}(R)\oplus (Z(R[G]))^{\mathbb Z}.
\]
Consequently,  $\Der_R R[G]\cong (Z(R)[G])^{\mathbb Z}$.  Finally,
$\partial_{\alpha}(x)=0$ and so $\IDer_R R[G] =0$,  i.e., each
nonzero $R$-derivation of $R[G]$ is outer. \end{proof}

For example, if ${\mathbb Q}^+$ is the additive  group of the rational numbers field ${\mathbb Q}$, then $\Der_RR[{\mathbb Q}^+]=0$.

\begin{corollary}\label{C:29}
Let $R$ be a ring. If  $G$ is a countable abelian  group such that each prime $p\in \pi (G)$ is invertible in $R$, then $\Der_RR[G]=0$ or each nonzero $R$-derivation of $R[G]$ is inner.
\end{corollary}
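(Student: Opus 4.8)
The plan is to reduce the stated dichotomy to the single assertion $\Der_RR[G]=0$ and then analyse it through the torsion part of $G$. Since $G$ is abelian and $Z(R)$ is commutative, the group ring $Z(R)[G]$ is commutative, so $Z(Z(R)[G])=Z(R)[G]$ and Lemma \ref{GGC4} yields $\IDer_RR[G]\cong(Z(R)[G])^L/Z(Z(R)[G])=0$. Hence the only inner $R$-derivation of $R[G]$ is the zero map, and the alternative ``each nonzero $R$-derivation of $R[G]$ is inner'' holds exactly when $\Der_RR[G]=0$. The two alternatives of the corollary thus coincide, and it suffices to decide, for the given $G$, whether $\Der_RR[G]=0$.

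First I would show that every $\delta\in\Der_RR[G]$ annihilates the torsion subgroup. For $g\in\tau G$ of order $n$, Proposition \ref{P:11}$(i)$ gives $\delta(g)\in Z(R)[G]$; as $G$ is abelian this element commutes with the powers of $g$, so that $0=\delta(g^n)=n\,g^{n-1}\delta(g)$, and since $n$ is invertible in $R$ (its prime divisors lie in $\pi(G)$) we conclude $\delta(g)=0$, whence $\delta(\tau G)=0$. By Lemma \ref{L:13}$(ii)$ the ideal $\mathfrak{I}_R(\tau G)$ is then a $\delta$-ideal, and $\delta$ descends to an $R$-derivation of $R[G]/\mathfrak{I}_R(\tau G)\cong R[G/\tau G]$. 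In particular, if $G$ is itself a torsion group this already forces $\delta=0$ (equivalently, one may invoke Theorem \ref{T:2} directly), giving the first alternative $\Der_RR[G]=0$; in general the problem is pushed onto the countable torsion-free abelian quotient $G/\tau G$.

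The decisive step is therefore the torsion-free case, where Theorem \ref{T:7} is available: part $(i)$ identifies the $R$-derivations of $R[G/\tau G]$ with the component $(Z(R[G/\tau G]))^{\mathbb Z}$, while part $(ii)$ records that every nonzero such derivation is outer. This is precisely where the main obstacle lies, because the ``inner'' alternative we are aiming at forces these derivations to vanish, i.e.\ the collapse of $(Z(R[G/\tau G]))^{\mathbb Z}$. The example $G=\mathbb{Q}^+$, for which $\Der_RR[\mathbb{Q}^+]=0$, indicates that such collapse is governed by divisibility of the torsion-free part, whereas a free factor $\mathbb{Z}$ supports genuine nonzero (outer) $R$-derivations. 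I would therefore expect to reach the ``inner'' alternative only after isolating the divisibility/structural hypothesis on $G/\tau G$ that makes $(Z(R[G/\tau G]))^{\mathbb Z}$ vanish; verifying this collapse, and hence $\Der_RR[G]=0$, is the hard part of the argument.
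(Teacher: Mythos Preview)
Your reduction is correct and you have in fact uncovered a misprint in the statement rather than a gap in your argument. The paper gives no proof of this corollary; it is placed immediately after Theorem~\ref{T:7} as an evident consequence, and the intended conclusion is almost certainly ``outer'' (echoing Theorem~\ref{T:7}$(ii)$), not ``inner''.

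Your first paragraph already establishes this. Since $G$ is abelian, $Z(R)[G]$ is commutative and Lemma~\ref{GGC4} forces $\IDer_RR[G]=0$; hence the only inner $R$-derivation is zero, and with ``inner'' the two alternatives collapse to the single assertion $\Der_RR[G]=0$. But this is false in general: take $G=\mathbb{Z}$, so that $\pi(G)=\varnothing$ and the hypothesis is vacuous, while Theorem~\ref{T:7}$(i)$ gives $\Der_RR[\mathbb{Z}]\cong (Z(R[\mathbb{Z}]))^{\mathbb Z}\neq 0$. Thus the printed statement cannot be what is meant, and your ``main obstacle'' is not a missing idea but precisely this counterexample.

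With the word corrected to ``outer'' the corollary is immediate from the observation $\IDer_RR[G]=0$ that you already made: either $\Der_RR[G]=0$, or some nonzero $R$-derivation exists and, being non-inner, is outer. No further structural analysis of $\tau G$ or $G/\tau G$ is needed; the detour through Theorem~\ref{T:2}, Lemma~\ref{L:13}$(ii)$ and the induced derivation on $R[G/\tau G]$ is unnecessary for the corrected statement, though it is perfectly valid reasoning.
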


\section{Acknowledgment}
We would like to express our deep gratitude to the referee for the thoughtful and constructive review of our manuscript.

\end{document}